\documentclass[reqno,12pt]{amsart}
\usepackage{amssymb,amsmath}
\usepackage{graphicx}
\usepackage{color}
\usepackage{pgf,ifpdf,graphics,xcolor}
\newcommand\ifpdf
  \input{[.pdf_t}
   \else
   \input{[.pstex_t}
   \fi1]{\ifpdf
  \input{#1.pdf_t}
   \else
   \input{#1.pstex_t}
   \fi}
\newtheorem{theorem}{Theorem}
\newtheorem{proposition}[theorem]{Proposition}
\newtheorem{lemma}[theorem]{Lemma}
\newtheorem{definition}[theorem]{Definition}
\newtheorem{corollary}[theorem]{Corollary}
\newtheorem{remark}[theorem]{Remark}
\newtheorem{example}[theorem]{Example}

\newcommand{\C}{{\mathbb C}}
\newcommand{\R}{{\mathbb R}}
\newcommand{\Z}{{\mathbb Z}}
\newcommand{\N}{{\mathbb N}}
\newcommand{\Q}{{\mathbb Q}}
\newcommand{\CA}{{\mathcal A}}
\newcommand{\CB}{{\mathcal B}}

\newcommand{\CF}{{\mathcal F}}
\newcommand{\CG}{{\mathcal G}}
\newcommand{\CH}{{\mathcal H}}

\newcommand{\CM}{{\mathcal M}}
\newcommand{\CO}{{\mathcal O}}

\newcommand{\CT}{{\mathcal T}}
\newcommand{\CV}{{\mathcal V}}

\newcommand{\la}{{\langle}}
\newcommand{\ra}{{\rangle}}
\newcommand{\aff}{\operatorname{aff}}
\newcommand{\flip}{\operatorname{Flip}}
\newcommand{\suppresschi}{{}}

\newcommand{\geom}{\operatorname{Geom}}

\newcommand{\lin}{\operatorname{lin}}

\newcommand{\varchenko}{{\mathcal X}}
\newcommand{\ANA}{{A}}
\newcommand{\discretepol}{{S}}
\newcommand{\intpol}{{I}}
\renewcommand{\ll}{{\langle}}

\newcommand{\rr}{{\rangle}}

\newcommand{\e}{{\mathrm{e}}}
\renewcommand{\a}{{\mathfrak{a}}}
\renewcommand{\b}{{\mathfrak{b}}}
\renewcommand{\c}{{\mathfrak{c}}}

\newcommand{\f}{{\mathfrak{f}}}
\newcommand{\g}{{\mathfrak{g}}}
\renewcommand{\t}{{\mathfrak{t}}}
\newcommand{\p}{{\mathfrak{p}}}
\newcommand{\q}{{\mathfrak{q}}}
\newcommand{\lattice}{\Lambda}

\title[Parametric polytopes]{Analytic continuation of a parametric polytope and wall-crossing}

\author{N. Berline}
\address{Nicole Berline: Ecole Polytechnique, Centre de Mathematiques Laurent Schwartz,  91128 Palaiseau Cedex, France}
\email{nicole.berline@math.polytechnique.fr}
\author{M. Vergne}
\address{Mich\`ele Vergne: Universite Paris 7 Denis Diderot, Institut Math\'ematique de
Jussieu, 175 rue du Chevaleret - 75013 Paris, France  }
\email{vergne@math.jussieu.fr}

\begin{document}
\begin{abstract}
We define a set theoretic ``analytic continuation" of a  polytope defined by  inequalities. For the regular values of the parameter, our
construction coincides with the parallel transport of polytopes in a
mirage  introduced by Varchenko. We determine
the set-theoretic variation when crossing a wall in the parameter
space, and we relate this  variation to Paradan's wall-crossing
formulas for integrals and discrete sums. As another application, we
refine the theorem of Brion on generating functions of polytopes and
their cones at vertices.
We describe the relation of this work with the equivariant index of a line bundle  over a toric variety and Morelli constructible support function.
\end{abstract}
\maketitle
\date{}
\tableofcontents

\section*{Introduction}
Consider a polytope   $\q(b)$ in $\R^d$ defined by a system of $N$ linear inequalities:
\begin{equation}\label{eq:mirage}
\q(b):=\{y\in \R^d;\langle\mu_i,y\rangle\leq b_i, \;\; 1\leq i\leq N.\}
\end{equation}
In this article, we study  the variation of the polytope $\q(b)$
when the parameter $b=(b_i)$ varies in $\R^N$, but  the linear forms $\mu_i$ are fixed (the parametric arrangement of
hyperplanes $\langle \mu_i,y\rangle=b_i$ so obtained is called a mirage in \cite{varchenko}).

Our main construction  is the following.
Starting with a parameter $b^0$ which is regular (this is defined below),
 we construct a  function  $\varchenko(x_1,x_2,\ldots,x_N)$ on $\R^N$
which  is a linear combination of characteristic functions of  various semi-open coordinate quadrants
in $\R^N$. Define

$$
A(b)(y)=\varchenko(b_1-\langle\mu_1,y\rangle,\dots, b_N-\langle\mu_N,y\rangle).
$$

The crucial feature of the  function $\varchenko$ is that, for $b$ near $b^0$, $A(b)(y)$ is the characteristic function of the polytope  $\q(b)$, but $\ANA(b)$  enjoys analyticity properties with respect to the parameter $b$  when $b$ moves  in $\R^N$,  that we will explain below.
So we say that $\ANA(b)$ is the ``analytical continuation "of the polytope $\q(b)$  (with initial value $b^0$).

Before stating these properties, let us give two examples. We denote by $p_i$ the characteristic function
of the closed coordinate half-space,  $p_i=[x_i\geq 0]$, and we set $q_i=1-p_i=[x_i<0]$.
First,  let $\q $ be the $d$-dimensional simplex defined by the $d+1$ inequalities $y_i\geq 0, \sum_{i=1}^d y_i\leq 1$.
In this case we have, (see Example \ref{standard_simplex}),
$$
\varchenko(x)=p_1\cdots p_{d+1}+(-1)^d q_1\cdots q_{d+1}.
$$
Thus $\varchenko(x)$ is the sum of the [characteristic function of the]  closed positive coordinate quadrant in $\R^{d+1}$ and of $(-1)^d$ times the open negative one.
Let $b=(b_1,\ldots,b_{d+1})$.
If $b_1+\cdots +b_{d+1}\geq 0$, then $\ANA(b)(y)=\varchenko(b_1+y_1,\cdots, b_d+y_d, b_{d+1}-(y_1+\cdots + y_d))$
is the characteristic function of the simplex $\{y_i\geq -b_i, \sum_{i=1}^d y_i\leq b_{d+1}\}$,
while if $b_1+\cdots +b_{d+1}< 0$,
then $\ANA(b)(y)$  is equal to
$(-1)^d$ times the characteristic function of the symmetric open simplex $\{y_i< -b_i, \sum_{i=1}^d y_i> b_{d+1}\}$.
In particular, in dimension $d=1$, starting with the closed interval $[0,1]$,
the analytic continuation $\ANA(b)$ is the  closed interval
$\{-b_1\leq y\leq b_2\}$
when $b_1+b_2\geq 0$,
while $\ANA(b)$ is $(-1)$ times the open interval $\{b_2< y< -b_1\}$ when $b_1+b_2< 0$ (Fig.\ref{intervalfirst})
\begin{figure}[!h]
\begin{center}
  \includegraphics[width=2 in]{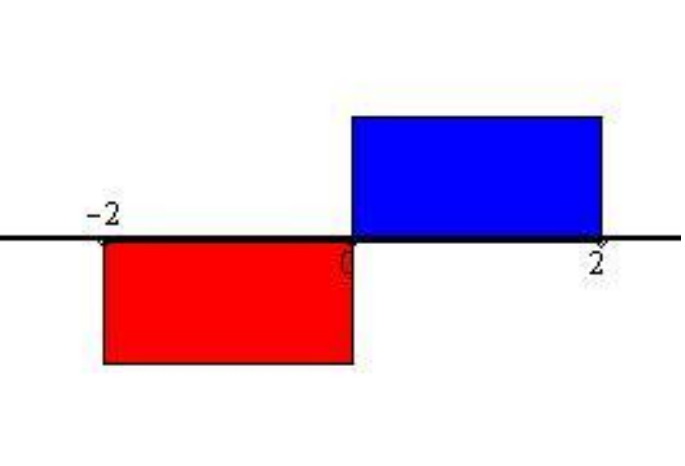}
\caption{ In blue for $b=(0,2)$, $\q(b)= [0,2]$ ,  in red for $b=(0, -2)$,
 $\ANA(b)= (-1)$ times $]-2,0[$  }
 \label{intervalfirst}
\end{center}
\end{figure}

For the second example, we start with the tetragon  illustrated in Fig.\ref{chamber24_intro}  defined by the $4$ inequalities $y_2+2\geq 0$, $y_1+1\geq 0$, $y_1+y_2\leq 0$, $y_1-y_2\geq 0$.
In this case we have (see Example \ref{ex:zonotopeB2_suite} and Subsection \ref{ex:tetragon})
$$
\varchenko(x)=
p_{{1}}p_{{2}}p_{{3}}p_{{4}}-p_{{1}}q_{{2}}q_{{3}}p_{{4}}-q_{{1}}p_{{2}}p_{{3
}}q_{{4}}+q_{{1}}q_{{2}}q_{{3}}q_{{4}},
$$
a signed sum of characteristic functions of $4$ semi-open quadrants.

Some values of the analytic continuation $\ANA(b)$ are illustrated in
Figs. \ref{chamber24_intro} and \ref{tope41_intro}.
For each value of $b$, it is a signed sum of semi-open polygons.
Components with a $+$ sign are colored in blue, components with a $-1$ sign are colored in red.
Semi-openness is indicated by dotted lines.
 \begin{figure}[!h]
\begin{center}
  \includegraphics[width=1.4 in]{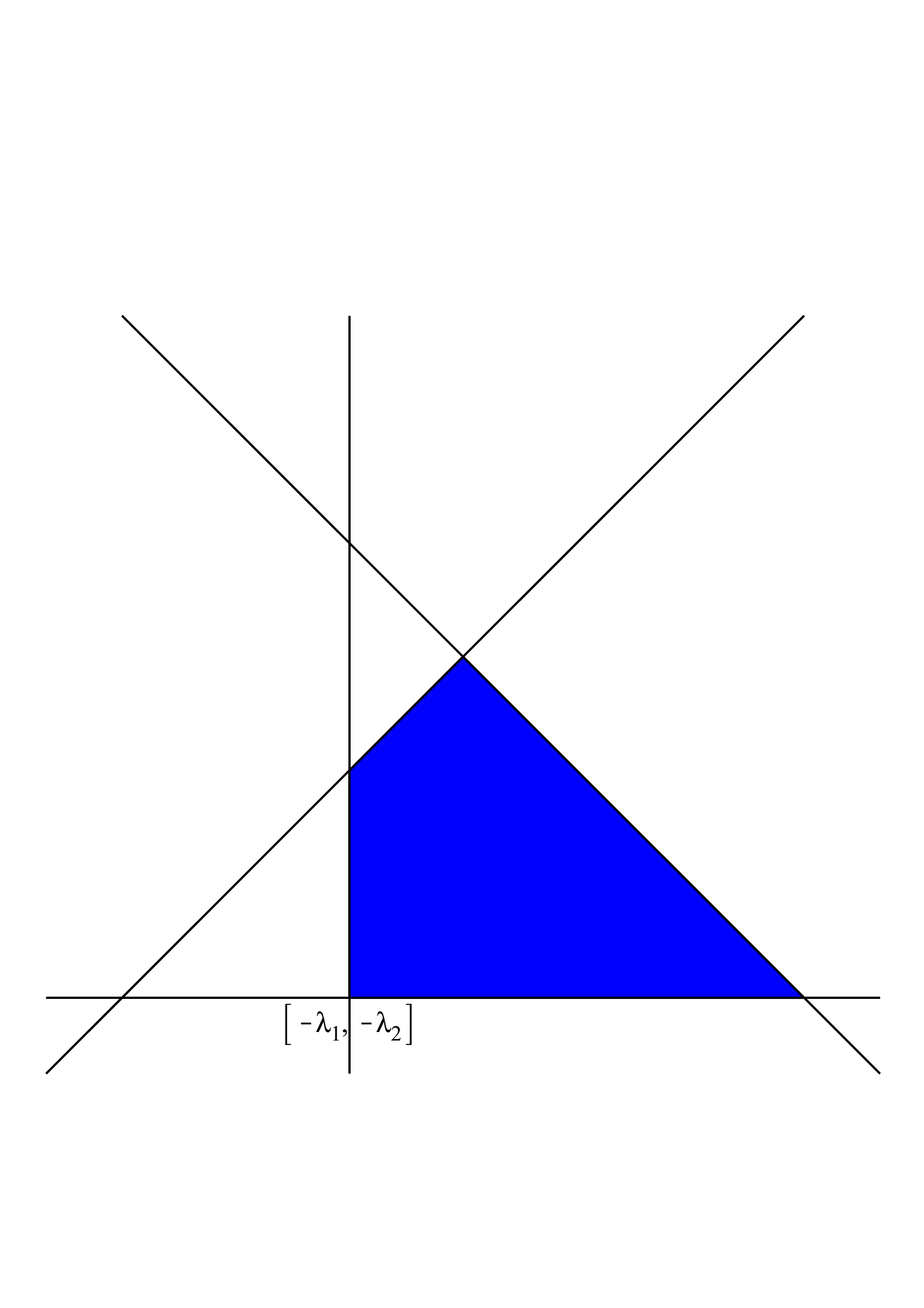}  \includegraphics[width=1.4 in]{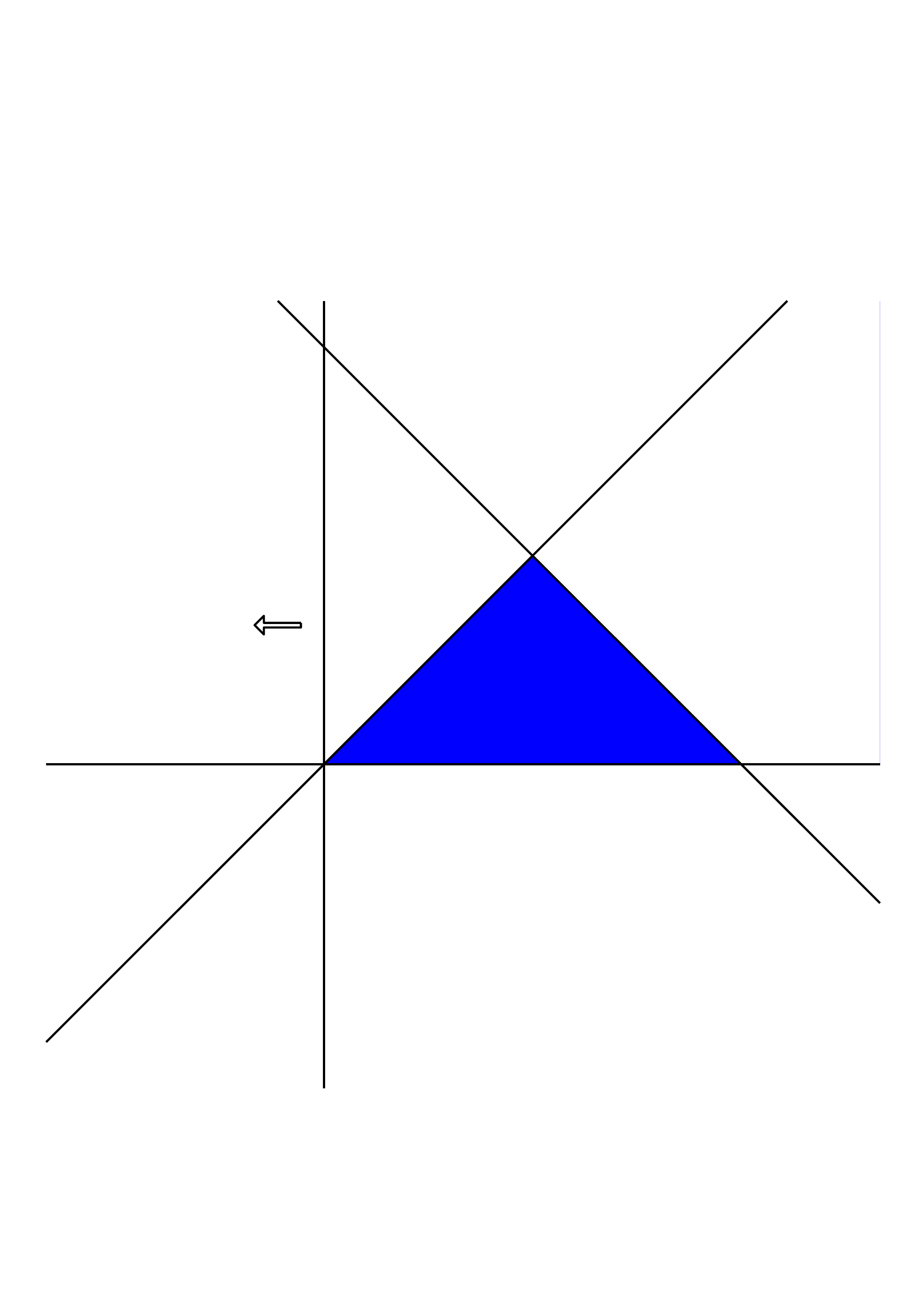}
  \includegraphics[width=1.4 in]{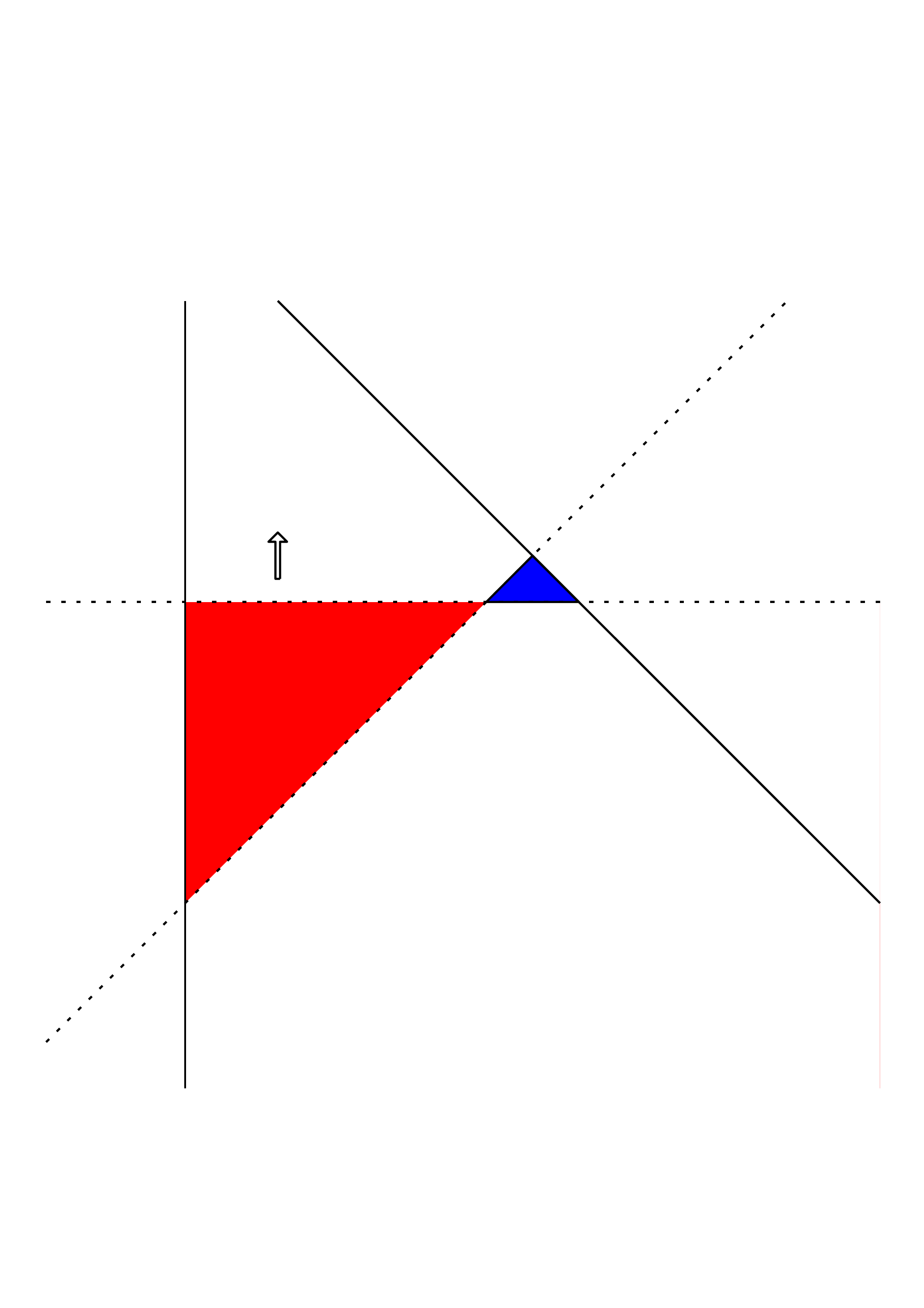}
   \caption{Analytic continuation of a tetragon}
   \label{chamber24_intro}
  \end{center}
\end{figure}
\begin{figure}[!h]
\begin{center}
  \includegraphics[width=1.4 in]{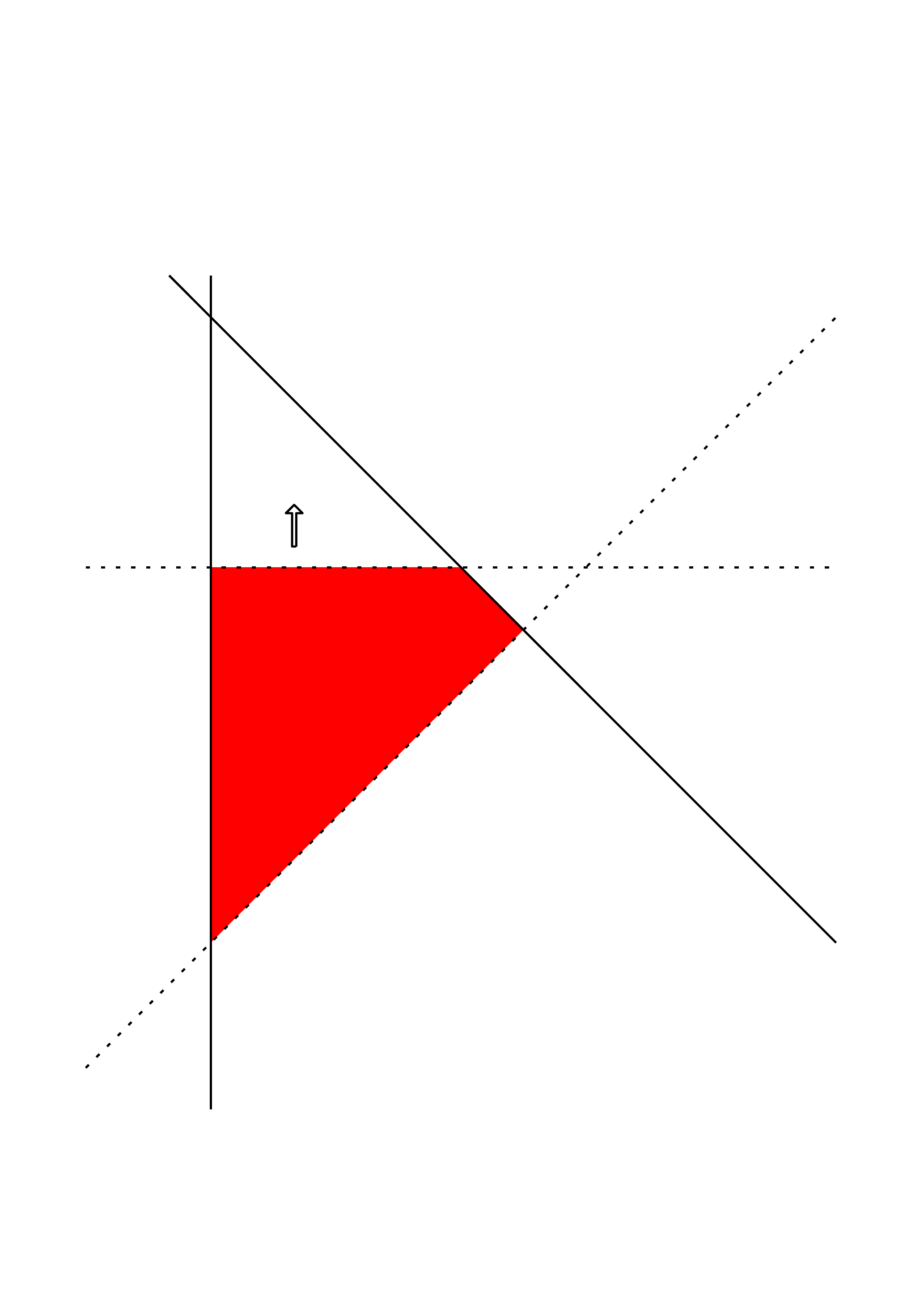}
  \includegraphics[width=1.4 in]{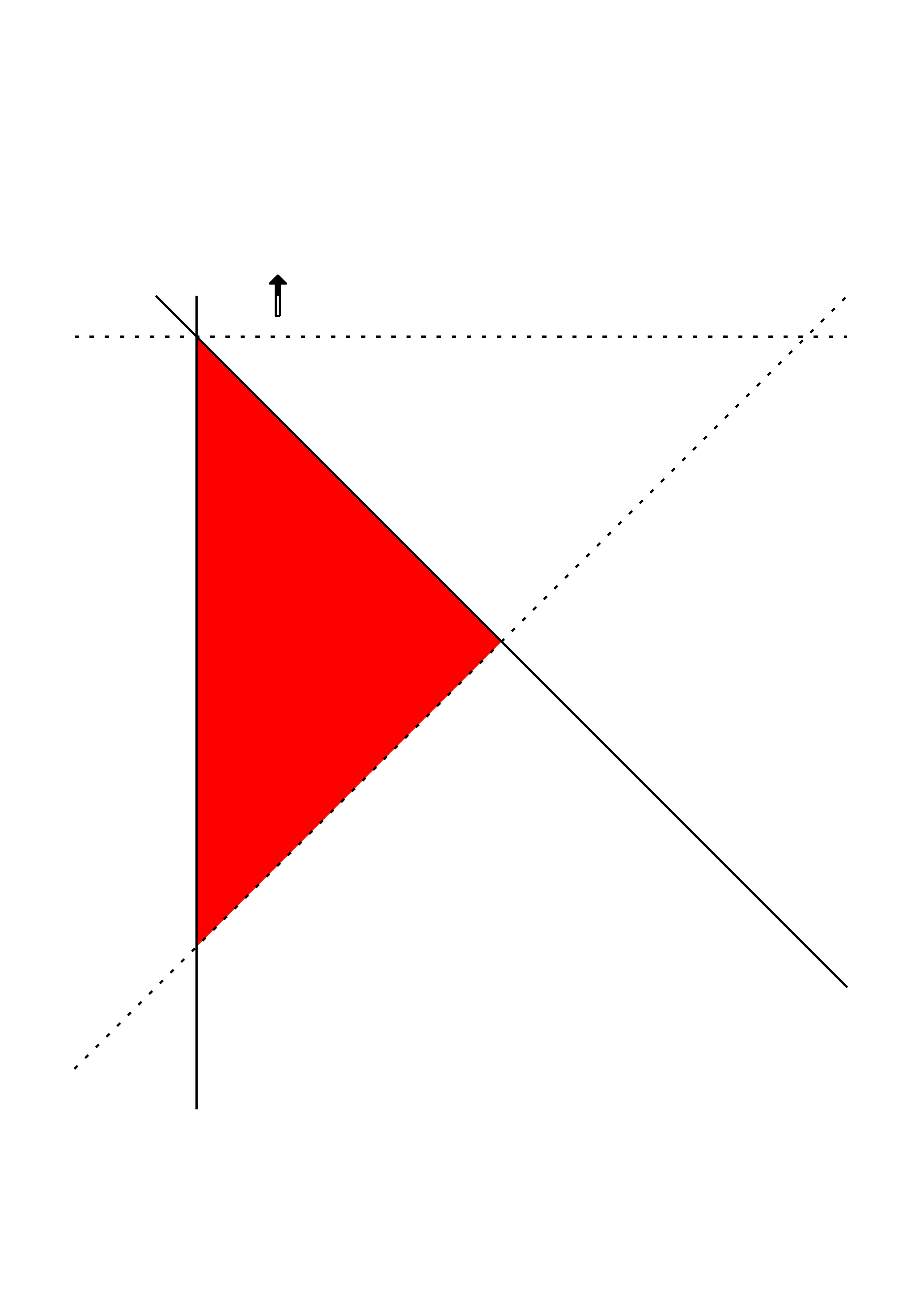}
 \includegraphics[width=1.4 in]{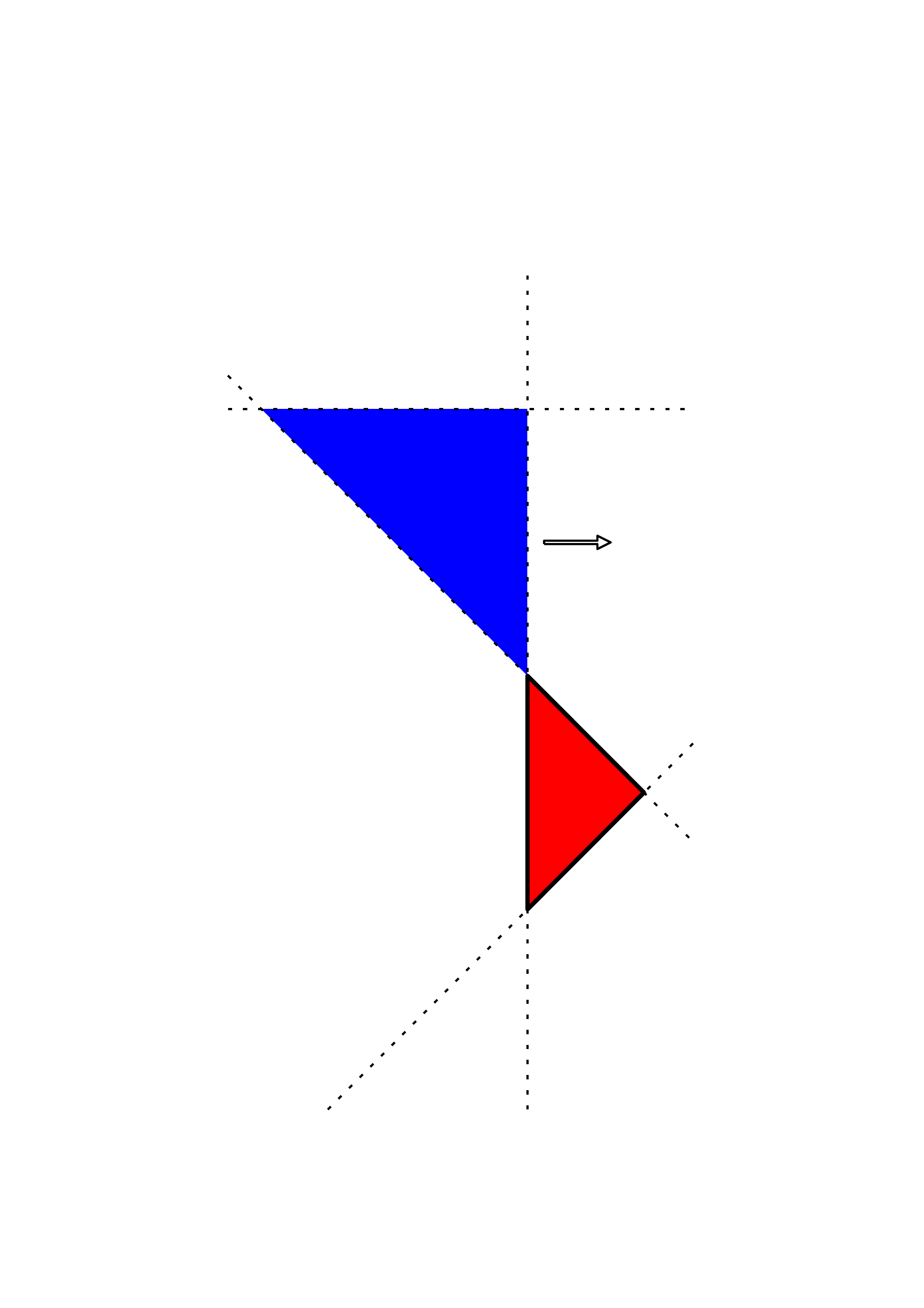}
  \caption{More analytic continuation of a tetragon}
  \label{tope41_intro}
\end{center}
\end{figure}

Let us describe  now  some of the properties  of $\ANA(b)$.

A point $b=(b_i)\in \R^N$  is called regular (with respect to the sequence  of linear forms $\mu_i$)
if  a subset of $k$  equations among     the equations $\{\mu_i=b_i\}$  do not have a common solution if $k>d$.
We define a tope $\tau$ to be  a connected component of the open set of  regular points $b$ in $\R^N$.
Topes are separated by hyperplanes which we call walls.

Let $b^0\in \R^N$ be regular. Recall that we assume that  $\q(b^0)$ is compact.
In this case, each vertex of the polytope  $\q(b^0)$ belongs to exactly $d$ facets,
in other words the polytope $\q(b^0)$ is simple.
Loosely speaking, the shape of the polytope $\q(b)$  does not change
when $b$ remains close to  $b^0$. The facets of $\q(b)$
remain parallel to those of $\q(b^0)$, while its vertices depend linearly  on $b$. When $b$ crosses a wall, the
shape of $\q(b)$ changes.

Let $h(y)$ be  a polynomial function  on $\R^d$. The  integral
$$
\int_{\q(b)}h(y)dy,
$$
and  the  discrete sum
$$
\sum_{y\in \q(b)\cap \Z^d}h(y)
$$
are classical topics.
In particular, if $h$ is the constant function $1$,
 these quantities are respectively the volume of the polytope $\q(b)$ and the number of integral points in the polytope $\q(b)$.
 It is well-known that
the function  $ b\to \int_{\q(b)}h(y)dy $ is given on each tope  by a  polynomial function of $b$.
Moreover, if we assume that the
linear forms  $\mu_i$ are rational, the discrete sum $b\to  \sum_{y\in
\q(b)\cap \Z^d}h(y) $ is given on each tope by a quasi-polynomial function of $b$.
These results follow for instance from
Brion's theorem of decomposing a polytope as a sum of its tangent cones at
vertices, \cite{Brion88}, \cite{brion-vergne-97-residue}.
When the parameter $b$
crosses a wall of the tope $\tau$, the integral  $ b\to \int_{\q(b)}h(y)dy $ is  given by a different
polynomial, the discrete sum by a different quasi-polynomial.
Their
wall-crossing variations have been computed by Paradan, in a more
general context of Hamiltonian geometry, using transversally
elliptic operators, \cite{paradan2004}.

The function $\varchenko$  which we construct in this article depends  on the tope $\tau$ which contains the starting value $b^0$, and we will study its dependance with respect to $\tau$.
 Therefore, we  write  $\varchenko(\tau)(x)$ and
 $\ANA(\tau,b)(y)=\varchenko(\tau)(b_1-\langle\mu_1,y\rangle,\ldots, b_N-\langle\mu_N,y\rangle)$ instead of $\varchenko(x)$ and $\ANA(b)(y)$ from now on. The function
$y\mapsto \ANA(\tau, b)(y)$ enjoys the following properties.

\noindent $\bullet$ When $b$ is in the closure $\overline \tau$  of the tope
$\tau$,  $\ANA(\tau, b)$ coincides with
 the characteristic function $\suppresschi{[\q(b)]}$ of $\q(b)$.

\noindent $\bullet$  The function $ \ANA(\tau, b)(y)$ is a linear
combination with integral coefficients of characteristic functions of bounded
faces of various dimensions of the arrangement of hyperplanes
$\langle\mu_i,y\rangle = b_i, \;\; 1\leq i\leq N$.

 \noindent $\bullet$ The
integral
$$
\int_{\R^d}\ANA(\tau, b)(y)e^{\langle\xi,y\rangle}dy
$$
is an analytic function of $(\xi,b)\in (\R^d)^*\times \R^N$.
For  $b\in \overline \tau$, it coincides with
$\int_{\q(b)} e^{\langle\xi,y\rangle}dy$. If $h(y)$ is a polynomial  function,
then $b\mapsto \int_{\R^d}\ANA(\tau, b)(y)h(y) dy
$ is a polynomial function of $b\in \R^N$ which coincides with $
\int_{\q(b)}h(y)dy $ when $b\in \overline \tau$ .

\noindent $\bullet$ Moreover, if we assume  that the $\mu_i$ are
rational, the discrete sum
$$
\sum_{y\in \Z^d}\ANA(\tau, b)(y)h(y)
$$
is a quasi-polynomial function of $b$, (see Definition  \ref{quasi} of quasi-polynomial functions).
 It coincides with $\sum_{y\in
\q(b)\cap \Z^d}h(y)$ for $b$ in the initial tope and even in a
neighborhood of its closure (see the precise statement in Corollary
\ref{th:continuity-on-closed-tope} ).

For instance, let us look again at the closed interval $[0,b]$.
For $b\in \N$,  the number of integral points in $[0,b]$ is given by the polynomial function $b+1$.
For a negative integer $b<0$, the value $b+1$
is  indeed equal to $(-1)$ times the number of integral points in the open interval $b<y<0$ .

The key idea is to define $\ANA(\tau, b)$ as a signed sum
of closed affine cones,  shifted when $b$ varies, so that
their vertices depend linearly on the parameter $b$. We use
 decompositions of a polytope $\p$ as a
signed sum of cones, such as the Brianchon-Gram decomposition, (see
for instance \cite{brion-vergne-97-lattice}).
\begin{theorem}[Brianchon-Gram decomposition]\label{th:brianchon-gram}
Let $\p\subset \R^d$ be a polytope. For each  face $\f$ of $\p$, let
$\t_{\rm aff}(\p,\f)\subseteq \R^d $ be the affine tangent cone to $\p$ at
the face $\f$. Then
\begin{equation}\label{eq:brianchon-gram}
\suppresschi{[\p]}=\sum_{\f\in \CF(\p)}(-1)^{\dim
\f}\suppresschi{[\t_{\rm aff}(\p,\f)]},
\end{equation}
where  $ \CF(\p)$ is the set of  faces of $\p$.
\end{theorem}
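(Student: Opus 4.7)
The plan is to verify \eqref{eq:brianchon-gram} pointwise. I fix $y\in\R^d$ and compare the two sides, splitting into the cases $y\in\p$ and $y\notin\p$. The preliminary fact I rely on is that, given a facet presentation $\p=\{z: \langle\mu_i,z\rangle\leq b_i,\ 1\leq i\leq N\}$, the affine tangent cone at a face $\f$ is cut out precisely by the inequalities whose supporting hyperplane contains $\f$:
\[
\t_{\rm aff}(\p,\f)=\bigcap_{i:\ \f\subseteq H_i}\{z:\ \langle\mu_i,z\rangle\leq b_i\},\qquad H_i:=\{\mu_i=b_i\}.
\]
Consequently $y\in\t_{\rm aff}(\p,\f)$ if and only if no facet of $\p$ containing $\f$ has its inequality violated at $y$. (This is a quick check using a point in the relative interior of $\f$.)

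If $y\in\p$, no inequality is violated, so $y$ lies in every tangent cone and the right-hand side of \eqref{eq:brianchon-gram} collapses to $\sum_{\f\in\CF(\p)}(-1)^{\dim\f}$. This is the Euler relation for the face lattice of $\p$---a closed topological $(\dim\p)$-ball---and equals $1$, matching the left-hand side.

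If $y\notin\p$, I set $I(y):=\{i:\langle\mu_i,y\rangle>b_i\}\neq\emptyset$ and
\[
B:=\bigcup_{i\in I(y)}(\p\cap H_i),
\]
the union of the facets ``visible'' from $y$. The preliminary fact says $y\in\t_{\rm aff}(\p,\f)$ iff $\f\not\subseteq B$, so applying the Euler relation once more,
\[
\mathrm{RHS}(y)=\sum_{\f\not\subseteq B}(-1)^{\dim\f}=1-\sum_{\f\subseteq B}(-1)^{\dim\f}=1-\chi(B),
\]
so it remains to prove $\chi(B)=1$. This is the \emph{visibility lemma}, which I would establish by choosing a hyperplane $H$ strictly separating $y$ from $\p$ and projecting $\p$ radially from $y$ onto $H$; the restriction of this projection to $B$ is a continuous bijection (each ray from $y$ meeting $\p$ hits $B$ exactly once, at its first intersection with $\p$) between two compact Hausdorff spaces, hence a homeomorphism of $B$ onto the shadow of $\p$ on $H$, which is a convex $(d-1)$-dimensional polytope and so contractible.

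The only non-formal step is the visibility lemma; everything else is the Euler relation plus bookkeeping. A purely combinatorial alternative would induct on $\dim\p$ by slicing with a generic hyperplane through $y$, or do Möbius inversion on the face poset, but the radial-projection route is cleaner and isolates the non-trivial content as the single identity $\chi(B)=1$.
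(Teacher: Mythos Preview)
The paper does not give its own proof of the Brianchon--Gram theorem; it is quoted in the introduction as a classical result, with a reference to \cite{brion-vergne-97-lattice}, and then used throughout as a black box (see e.g.\ the derivation of \eqref{eq:Xgeom_and_BGtheorem} and the base case of Proposition~\ref{semi-closed-brianchon-gram}). So there is no ``paper's proof'' to compare against.

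Your argument is correct. The two ingredients are exactly the right ones: the Euler relation $\sum_{\f\in\CF(\p)}(-1)^{\dim\f}=1$, and the contractibility of the ``visible'' boundary $B$ from an exterior point. Your verification that $y\in\t_{\rm aff}(\p,\f)\Longleftrightarrow \f\not\subseteq B$ is clean, and the identification $\sum_{\f\subseteq B}(-1)^{\dim\f}=\chi(B)$ is just the fact that the faces of $\p$ contained in $B$ give a CW decomposition of $B$. For the visibility lemma, the key step that makes the radial projection $\pi|_B$ a \emph{bijection} onto the shadow is the observation that if $p\in B$, say $p\in\p\cap H_i$ with $i\in I(y)$, then every point of the open segment $(y,p)$ violates the $i$th inequality and hence lies outside $\p$; this simultaneously gives injectivity (two distinct points of $B$ on the same ray would force one into the other's forbidden segment) and shows that each point of $B$ is a first-intersection point. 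Surjectivity follows because at the entry point of any ray into $\p$ some facet inequality switches from violated to satisfied, hence that facet is visible. You may want to say explicitly that one can assume $\p$ full-dimensional (otherwise restrict to $\aff(\p)$), but this is cosmetic.

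This pointwise ``beneath--beyond'' argument is one of the standard proofs of Brianchon--Gram; another common route is the valuation-theoretic one (extend the assignment $\p\mapsto\sum_\f(-1)^{\dim\f}[\t_{\rm aff}(\p,\f)]$ to a valuation and identify it with the identity), which is closer in spirit to how the paper actually \emph{uses} the theorem. Your approach has the advantage of being entirely elementary and of isolating the single non-formal input as $\chi(B)=1$.
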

Here, for a set $E\subset \R^d$, we denote by  $\suppresschi{[E]}$
the function on $\R^d$ which is the characteristic function of the set $E$.

For regular values of $b$, our construction of
$\ANA(\tau, b)$ coincides with  the parallel transport of
Varchenko \cite{varchenko}, the idea of which is
 quite simple. For instance,
write the Brianchon-Gram formula  for the closed interval $0\leq y\leq b$,
$$
[0\leq y\leq b] =[ y\leq b]+[y\geq 0] -[\R].
$$
If the vertex $b$  moves  to the left,   crosses the origin and becomes negative,
the right hand side of the Brianchon-Gram formula  becomes first,
 for $b=0$, the characteristic function of the point $0$, then for $b<0$,
the characteristic function of the open interval $b<y<0$  with a minus sign.

 Actually,  instead of the Brianchon-Gram
decomposition, Varchenko uses the
 polarized decomposition into semi-closed cones at
vertices which he obtains  in \cite{varchenko}.
However, we go beyond \cite{varchenko} in several
ways. First, as we already mentioned, we introduce (and compute) the ``precursor" function $\varchenko(\tau)$, a sum of characteristic functions of semi-open quadrants,
which gives rise to
 $\ANA(\tau, b)$ for all $b$. Moreover,
 we compute explicitly the wall-crossing variation
$$
\suppresschi{[\q(b)]}-\ANA(\tau, b)
$$
when $b$ belongs to a tope adjacent to the starting tope
$\tau$.
Actually, we compute the wall crossing variation at the level of the ``precursor" function $\varchenko(\tau)$ itself.

Finally, we show that ``analytic continuation"  of  the faces of the polytope $\q(b^0)$
occurs naturally, when one wants to compute $\sum_{y\in \q(b_0)\cap \Z^d}e^{\ll \xi,y\rr }$
for a   degenerate value of $\xi$.

\bigskip

Let us  now summarize the results of this article. We need some  notations.
It is   more convenient  to work  in the framework
of partition polytopes. So, let us first recall how one goes from the framework of
linear inequalities $\ll \mu_i,y\rr \leq b_i$ to  that of  partition polytopes.
A partition polytope $\p(\Phi, \lambda)$ is determined by a sequence
$\Phi=(\phi_j)_{1\leq j\leq N} $ of elements of a vector space $F$ (of dimension $r$)
and an element $\lambda\in F$, as follows:
\begin{definition}\label{def:partitionpolytope}
$$\p(\Phi, \lambda)=\{x\in \R^N ; \sum_{j=1}^N x_j\phi_j=\lambda,
\;\; x_j\geq 0.\}$$
\end{definition}

We  assume that the cone $\c(\Phi)$ generated by the $\phi_j$'s, is
salient and that $\Phi$ generates $F$. Thus the set $\p(\Phi, \lambda)$
is compact whenever $\lambda\in \c(\Phi)$ (if $\lambda$ is not in $\c(\Phi)$, then  $\p(\Phi, \lambda)$ is empty.)
The polytope $\p(\Phi,\lambda)$ is, by definition,
the intersection of  the affine subspace $$V(\Phi,\lambda)= \{x\in
\R^N ; \sum_{j=1}^N x_j\phi_j=\lambda\}$$ with the standard quadrant
$$
Q:=\{x\in \R^N;x_j\geq 0\}.
$$
A wall in $F$ is a hyperplane generated by $r-1$ linearly independent elements of $\Phi$.
An element $\lambda\in F$ is called $\Phi$-regular, if $\lambda$ does not lie on any wall.
If $\lambda\in \c(\Phi)$ is regular,  the polytope $\p(\Phi,\lambda)$
is a simple polytope of dimension $d=N-r$ contained in the affine space $V(\Phi,\lambda)$.

Consider the map $M:\R^N\to F$  given by $M(x)=\sum_i x_i\phi_i$.
Let $V\subset \R^N$ be the kernel of $M$.
 $$
 V= \{x\in
\R^N ; \sum_{j=1}^N x_j\phi_j=0\}.
$$
So $V$ has dimension  $d=N-r$.

If $E$ is a subset of $\R^N$, we denote now by $[E]$ the function on $\R^N$ which is the characteristic function of $E$.
Thus if $E$ is a subset of $V$, its characteristic function in $V$ is identified with $[E]\cap [V]$.

If $\lambda=M(b)=\sum_i b_i\phi_i$, the map
\begin{equation} \label{eq:y_mapsto_x}
x\to x+b
 \end{equation}
 is an isomorphism between $V$ and the affine space $V(\Phi,\lambda)$ .

 Let $\mu_i$ be the linear form $-x_i$ restricted to $V$.
 The bijection $V\to V(\Phi,\lambda)$    maps the polytope $\q(b)=\{y\in V; \; \langle\mu_i,y\rangle\leq b_i\}$
 onto $\p(\Phi,\lambda)$.
 Indeed, the point $(y_1+b_1,\ldots, y_N+b_N)$ is in $\p(\Phi,\lambda)$ if and only if $-y_i\leq b_i$.

Moreover,  $b$ is regular with respect to the sequence of linear forms
$\mu_i$ on $V$ if and only if $\lambda =M(b)$ is $\Phi$-regular in $F$.
A connected component  of the set of $\Phi$-regular elements of  $F$ will be called a $\Phi$-tope.
Thus a subset $\tau\subset F$ is a $\Phi$-tope if and only if
 $M^{-1}(\tau)\subset \R^N$ is a connected component
of the set of regular parameters, i.e. a tope with respect to  $(\mu_i)$.

It is clearly  equivalent to study the variation of the polytope $\q(b)$ when $b$ varies,
or the variation of  the partition polytope $\p(\Phi,\lambda)$,
when $\lambda$ varies.
In this framework,  the inequations $x_j\geq 0$ are fixed, while the affine space $V(\Phi,\lambda)$ varies.
For example,
Fig. \ref{intervalagain}  shows  the interval $[0,b]$, in blue, now realized as
 $\{x_1\geq 0, x_2\geq 0, x_1+x_2=b\}$.
The analytic continuation $\ANA(\tau,b)$ for $b<0$ is colored in red on this figure,
where a minus sign is assigned to red.
\begin{figure}[!h]
\begin{center}
 \includegraphics[width=2 in]{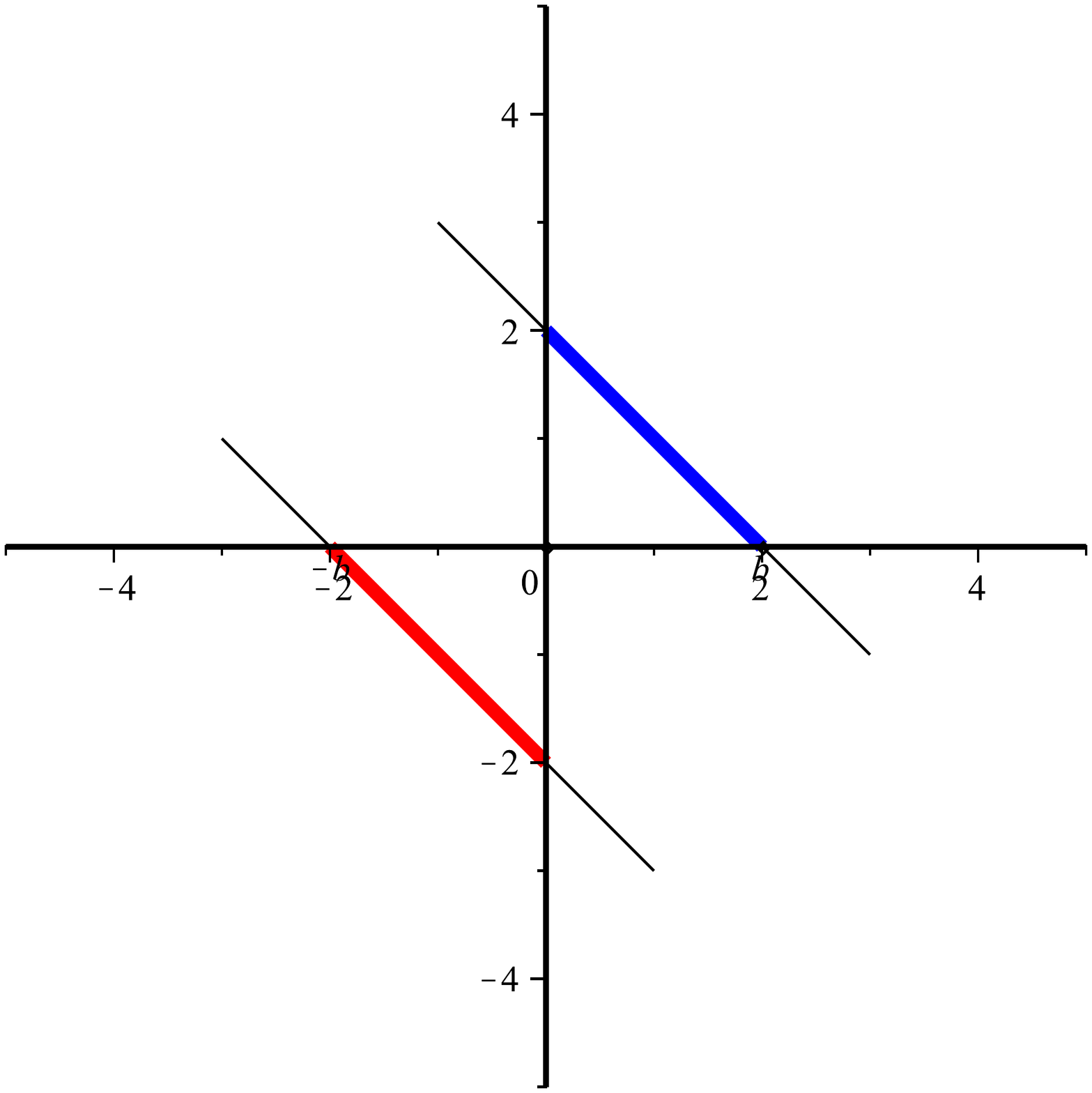}
\caption{$F=\R, \Phi=(1,1)$.}
\label{intervalagain}
\end{center}
\end{figure}

We fix a $\Phi$-tope $\tau\subset F$, and consider $\lambda\in \tau$.
Recall the combinatorial description of the faces of the partition polytope
$\p(\Phi,\lambda)$.
We denote by $\CG(\Phi,\tau)$ (resp.
$\CB(\Phi,\tau)$) the set of
 $I\subseteq \{1,\dots, N\}$ such that $\{\phi_i,i\in I\}$ generates $F$
(resp. is a basis  of $F$) and such that $\tau$ is contained in the
cone generated by $\{\phi_i,i\in I\}$. The set of faces (resp.
vertices) of $\p(\Phi,\lambda)$ is in one-to-one correspondence with
$\CG(\Phi,\tau)$ (resp. $\CB(\Phi,\tau)$). The face which
corresponds to $I$ is
\begin{eqnarray}\label{def:faceI}
  \f_I(\Phi,\lambda) = \{x\in \R_{\geq 0}^N, \sum_{j=1}^N x_j\phi_j=\lambda, \;\; x_j=0 \mbox{ for }  j\in I^c\}.
  \end{eqnarray}
The affine tangent cone to $\p(\Phi,\lambda)$ at the
face $\f_I(\Phi,\lambda)$ is
\begin{equation}\label{eq:tangent-cone}
 \t_{I}(\Phi,\lambda)=\{x\in\R^N, \sum_{j=1}^N x_j\phi_j=\lambda, x_j\geq 0 \mbox {  for  } j\in I^c
 \}.
\end{equation}
If $\lambda$ is in $\c(\Phi)$, but is not in the tope $\tau$, then
the partition polytope $\p(\Phi,\lambda)$ is not empty, but its
faces are no longer in one-to-one correspondence with
$\CG(\Phi,\tau)$, (see Fig.\ref{chamber24_intro}).
Nevertheless,  the cone in (\ref{eq:tangent-cone}) makes sense
\textbf{for every }$\lambda\in F$: it remains
``the same cone" $\{x\in V;x_j\geq 0,j\in I^c\}$  up to a shift,
under the map   $V(\Phi,\lambda)\to V$ (see Formula (\ref{moveIg})).

We introduce now the main character
of this story,  the  function on $\R^N$ previously denoted by $\varchenko(\tau)$.
 \begin{definition}\label{geometric_brianchon_gram_function} The
 Geometric Brianchon-Gram
function is
\begin{equation*}\label{eq:geometric_brianchon_gram_function}
\varchenko(\Phi,\tau)=\sum_{I\in \CG(\Phi,\tau)}(-1)^{|I|-\dim
F}\prod_{j\in I^c}\suppresschi{[x_j\geq 0]}.
\end{equation*}
\end{definition}

Let us compute this function for the case of
$\Phi=(1,1)$ in $F=\R$.
Then
 $$\varchenko(\Phi,\tau)=[x_1\geq 0]+[x_2\geq 0]-[\R^2]$$ is equal to
 $$[x_1\geq 0,x_2\geq 0]-[x_1<0,x_2<0],$$
  the characteristic function of the closed positive quadrant minus
  the characteristic function of the open negative quadrant,  (Fig. \ref{varchenkodim1}).
\begin{figure}[!h]
\begin{center}
 \includegraphics[width=2 in]{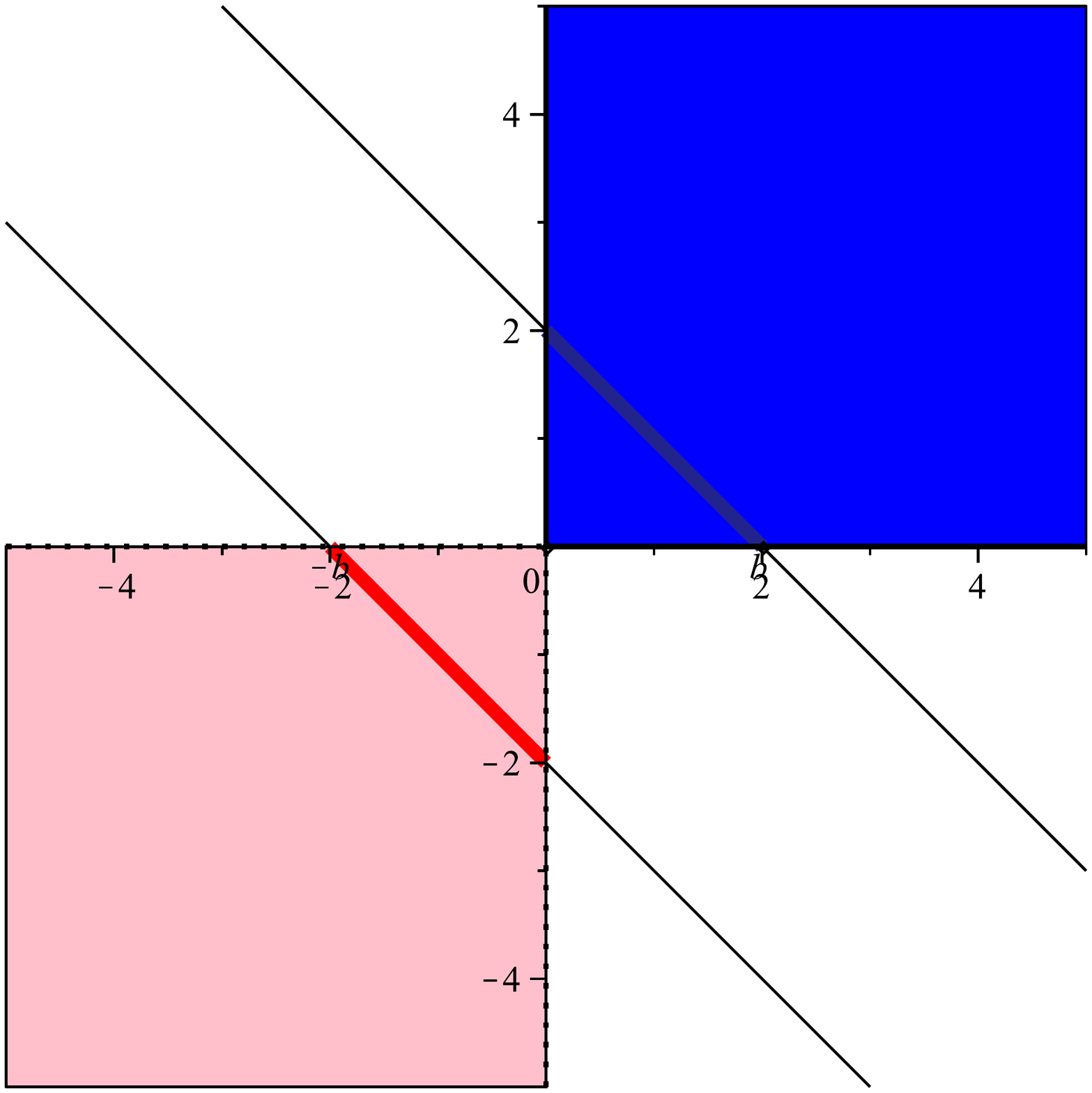}
 \caption{The function $\varchenko(\Phi,\tau)$ for $\Phi=(1,1)$}
 \label{varchenkodim1}
 \end{center}
\end{figure}

If $\lambda\in \tau$, the Brianchon-Gram theorem implies
\begin{equation}\label{eq:Xgeom_and_BGtheoremintro}
\varchenko(\Phi,\tau)\suppresschi{[V(\Phi,\lambda)]}=\suppresschi{[\p(\Phi,\lambda)]},
\end{equation}
the characteristic function of the partition polytope $\p(\Phi,\lambda)$.
However, the function
$\varchenko(\Phi,\tau)\suppresschi{[V(\Phi,\lambda)]}$ is defined for any
$\lambda\in F$. It is  a signed sum of characteristic functions
of closed cones intersected with the affine space $V(\Phi,\lambda)$.

For instance, in the case of $\Phi=(1,1)$, taking the product of $\varchenko(\Phi,\tau)$
with the characteristic function of the affine line $x_1+x_2=b$, we clearly recover the analytic continuation
 pictured in Fig. \ref{intervalagain}.

One of our first results (and our main technical tool)  (Theorem \ref{th:polarized-sum}) is the fact
that the Brianchon-Gram combinatorial function  $X(\Phi,\tau)(p,q)$
coincides with the analogous function associated with
any Lawrence-Varchenko polarized decomposition of a  polytope  into semi-closed cones
at vertices \cite{lawrence91}, \cite{varchenko}.

From this result, we deduce  that the function
$\varchenko(\Phi,\tau)\suppresschi{[V(\Phi,\lambda)]}$
is the signed sum of characteristic functions of semi-open polytopes,
in particular the support of this function is bounded for any $\lambda\in F$ (Corollary \ref{co:bounded}).

Reverting to the  framework of linear inequalities, we define now  $\ANA(\tau, b)$ to be the
inverse image of $\varchenko(\Phi,\tau)$
under the map  $v\to v+b$ from $V$ to $\R^N$.
For $b\in \tau$, $\ANA(\tau,b)$ is the characteristic function
of the polytope $\q(b)$.  For any  value of $b$,  it follows from the definition that $\ANA(\tau,b)$
is  the signed sum of the characteristic functions
of the tangent cones to the faces of the initial polytope $\q(b_0)$, with $b_0\in \tau$,
followed``by continuity ".
The above qualitative result  implies that  $\ANA(\tau,b)$
  is a signed sum of bounded faces
  of various dimensions of the mirage $\mu_i=b_i$.
  It is easy to see that $\ANA(\tau,b)$ enjoys the analyticity properties  stated above.

\bigskip

Our main result  is a wall crossing formula which we prove in a purely combinatorial context.

As the space $\R^N$ is the disjoint union of the semi-closed
quadrants $ Q_{\rm neg}^B: =\{x=(x_i); x_i
< 0 \mbox{ for } i\in B ,\,\,   x_i\geq 0 \mbox{ for } i\in B^c  \} $, we  write
$\varchenko(\Phi,\tau)$ in terms of the characteristic functions of these
quadrants.

 We introduce  the following polynomial in the
variables $p_i$ and $q_i$.
\begin{definition}  Let $\tau$ be a $\Phi$-tope.
The Combinatorial Brianchon-Gram  function associated to the pair
$(\Phi,\tau)$ is
\begin{equation}\label{Brianchon-Gram-combinatoireintro}
 X(\Phi, \tau)(p,q)= \sum_{I\in \CG(\Phi,\tau)}(-1)^{|I|-\dim
F}\prod_{i\in I^c}p_i \prod_{i\in I}(p_i+q_i) .
\end{equation}
\end{definition}
We recover $\varchenko(\Phi,\tau)$ when we substitute
$p_i=\suppresschi{[x_i\geq 0]}$  and $q_i=\suppresschi{[x_i< 0]}$
 in $X(\Phi, \tau)(p,q)$ (so that $p_i+q_i=1$).

For example, when $\Phi=(1,1)$, we have
$$
X(\Phi,\tau)=p_1(p_2+q_2)+p_2(p_1+q_1)-(p_1+q_1)(p_2+q_2)=p_1p_2-q_1q_2.
$$

The  polynomial $ X(\Phi,\tau)$ enjoys remarkable properties.
Let us say that the quadrant $Q_{\rm neg}^B$ is $\Phi$-bounded,
if the intersection of its closure $\overline{Q_{\rm neg}^B}$ with $V$ is reduced to $0$.
Equivalently,
the intersection of $Q_{\rm neg}^B$ with the affine space
$V(\Phi,\lambda)$ is bounded for any $\lambda\in F$.

We have
$$
X(\Phi,\tau)(p,q)=\sum_{B} z_B \prod_{i\in B}p_i \prod_{i\in B^c }q_i.
$$
where, for any subset $B\subseteq \{1,\dots, N\}$ such that $z_B\neq 0$,
the associated quadrant  $Q_{\rm neg}^B$ is $\Phi$-bounded. 
The coefficients $z_B$ are in $\Z$ and we give an algorithmic formula for them.

As we will observe in the last section, the decomposition in $\Phi$-bounded quadrants of $X(\Phi,\tau)$ is an analogue
of the fact that the $\overline \partial$ cohomology spaces
of a compact complex manifold are finite dimensional.

Our main result is Theorem \ref{th:wall_crossing}, where
we compute the function $X(\Phi,\tau_1)-X(\Phi,\tau_2)$, when  $\tau_1$ and $\tau_2$ be two adjacent topes
 (meaning that the intersection of their  closures  is contained  in a wall  and spans this wall).

We will not state the  formula for $X(\Phi,\tau_1)-X(\Phi,\tau_2)$ in this introduction,
but let us just mention a significant corollary,
the wall-crossing formula for the polytope $\p(\Phi,\lambda)$.
Let $A $ be
the set of $i\in \{1,\dots,N\}$ such that $ \phi_i$ belongs to  the
open side of $ H $   which contains $\tau_1$ (hence $-\phi_i$
belongs to the side of $\tau_2$).
Let $$
\p_{\rm flip}(\Phi,A,\lambda)=\{x\in V(\Phi,\lambda); x_i<0\,\, {\rm if}\,\, \, i\in A, x_i\geq 0 \,\,{\rm if}\,\, i\notin A\}.
$$
Thus $\p_{\rm flip}(\Phi,A,\lambda)$ is a semi-closed bounded polytope  in $V(\Phi,\lambda)$.

\begin{theorem} Let $\tau_1$ and $\tau_2$ be adjacent topes.
If $\lambda\in \tau_2$, we have
$$
\varchenko(\Phi,\tau_1)\suppresschi{[V(\Phi,\lambda)]}=\suppresschi{[\p(\Phi,\lambda)]}
+(-1)^{|A|} \suppresschi{[\p_{\rm flip}(\Phi,A,\lambda)]}.
$$
\end{theorem}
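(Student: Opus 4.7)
The plan is to deduce this wall-crossing formula for the partition polytope from the corresponding wall-crossing formula at the level of the precursor function $\varchenko(\Phi,\tau)$ (equivalently, for the combinatorial polynomial $X(\Phi,\tau)$), which is the content of the main wall-crossing theorem promised in the introduction (Theorem~\ref{th:wall_crossing}).

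First I would use the Brianchon--Gram identity~(\ref{eq:Xgeom_and_BGtheoremintro}) applied at $\tau_2$: since $\lambda \in \tau_2$, we have $\varchenko(\Phi,\tau_2)\,[V(\Phi,\lambda)] = [\p(\Phi,\lambda)]$. Subtracting this from the desired equation reduces the theorem to the identity
$$
\bigl(\varchenko(\Phi,\tau_1)-\varchenko(\Phi,\tau_2)\bigr)\,[V(\Phi,\lambda)] \;=\; (-1)^{|A|}\,[\p_{\rm flip}(\Phi,A,\lambda)],
$$
which is a statement purely about the difference of the two precursor functions intersected with an affine slice.

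Next I would invoke the combinatorial wall-crossing formula for $X(\Phi,\tau_1) - X(\Phi,\tau_2)$ as a polynomial in the variables $p_i,q_i$. Under the substitution $p_i=[x_i\geq 0]$, $q_i=[x_i<0]$ this passes to $\varchenko(\Phi,\tau_1)-\varchenko(\Phi,\tau_2)$. One expects, and the wall-crossing theorem confirms, that this difference reduces to the single monomial $(-1)^{|A|}\prod_{i\in A}q_i\prod_{i\notin A}p_i$, corresponding to the characteristic function of the semi-open quadrant $Q_{\rm neg}^{A}=\{x_i<0, i\in A;\ x_i\geq 0, i\notin A\}$. Intersecting with $V(\Phi,\lambda)$ then yields precisely $[\p_{\rm flip}(\Phi,A,\lambda)]$ by definition of the flip polytope, completing the reduction.

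The main obstacle is of course the combinatorial identity itself: one must show that in the telescoping difference
$$
\sum_{I\in\CG(\Phi,\tau_1)}(-1)^{|I|-\dim F}\prod_{i\in I^c}p_i\prod_{i\in I}(p_i+q_i)-\sum_{I\in\CG(\Phi,\tau_2)}(-1)^{|I|-\dim F}\prod_{i\in I^c}p_i\prod_{i\in I}(p_i+q_i),
$$
the only surviving contribution, after expanding and simplifying, is the single quadrant monomial indexed by $A$ with sign $(-1)^{|A|}$. The key observation is that $\CG(\Phi,\tau_1)$ and $\CG(\Phi,\tau_2)$ agree on those $I$ whose associated cone contains the wall $H$ in its interior, so the difference is controlled by subsets $I$ straddling the wall, and the signed contributions of those can be matched into a telescoping cancellation leaving only the flipped quadrant. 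Once that combinatorial identity is established, the geometric statement of the theorem follows immediately by substitution and restriction to $V(\Phi,\lambda)$.
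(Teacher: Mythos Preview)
Your overall strategy is right: reduce to the combinatorial wall-crossing identity for $X(\Phi,\tau_1)-X(\Phi,\tau_2)$, then pass to functions on $\R^N$ and intersect with $V(\Phi,\lambda)$. This is exactly how the paper proceeds (see Corollary~\ref{th:saut_geometrique}). However, your central claim about that combinatorial identity is incorrect, and this is a genuine gap.

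You assert that $X(\Phi,\tau_1)-X(\Phi,\tau_2)$ reduces to the \emph{single monomial} $\pm\prod_{i\in A}q_i\prod_{i\notin A}p_i$. It does not. Theorem~\ref{th:wall_crossing} gives
\[
X(\Phi,\tau_1)-X(\Phi,\tau_2)\;=\;-(-1)^{|A|}\,\flip_A X(\Phi_{\rm flip}^A,\tau_2),
\]
and the right-hand side is a full Brianchon--Gram polynomial for the flipped system $\Phi_{\rm flip}^A$, typically a sum of many monomials. Example~\ref{ex:zonotopeB2_suite} makes this explicit: there $X(\Phi,\tau_2)-X(\Phi,\tau_1)= p_1q_2q_3p_4+p_1q_2q_3q_4+q_1p_2p_3p_4+q_1p_2p_3q_4$, four monomials rather than one. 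So your ``telescoping to a single quadrant'' picture is wrong at the combinatorial level.

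What is true is that after applying $\geom$ and multiplying by $[V(\Phi,\lambda)]$ with $\lambda\in\tau_2$, the function $\geom\bigl(\flip_A X(\Phi_{\rm flip}^A,\tau_2)\bigr)[V(\Phi,\lambda)]$ collapses to the characteristic function of the single semi-open polytope $\p_{\rm flip}(\Phi,A,\lambda)$. But this is not ``by definition of the flip polytope''; it is a nontrivial statement, namely Proposition~\ref{geometric_flip}, which in turn rests on the semi-closed Brianchon--Gram theorem (Proposition~\ref{semi-closed-brianchon-gram}) applied to the flipped system. That proposition is the missing ingredient in your argument: it is what converts the many-term Brianchon--Gram expression for $\Phi_{\rm flip}^A$ into the characteristic function of one semi-open polytope, just as the ordinary Brianchon--Gram theorem converts $X(\Phi,\tau_2)$ into $[\p(\Phi,\lambda)]$.
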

This formula is clearly inspired by the results of Paradan \cite{paradan2004}.
In turn, we show that it implies the convolution formula of Paradan
which expresses the jump of the number of lattice points of the partition polytope
in terms of the number of lattice points of some lower dimensional polytopes associated to $\Phi\cap H$.

The above formula implies that, after crossing a wall,  
the analytic continuation of the original polytope $\p(\Phi,\lambda)$ is  the signed sum of two polytopes,
among which one, but no more than one, may be empty.
 As illustrated in Section \ref{exampleB2}, we see the new polytope
$\p_{\rm flip}(\Phi,A,\lambda)$ starting to show his nose when $\lambda$ crosses the wall.
 To be  precise, the wall $H$ must separate  two chambers, not just two topes,
 (as explained in Remark \ref{trivial_wall_crossing}) in order for the new polytope
 $\p_{\rm flip}(\Phi,A,\lambda)$ to be not empty.

When $F$ is  provided with a lattice $\Lambda$, and the $\phi_i$'s are in $\Lambda$,
the data $(\Phi,\lambda)$ parameterize  a toric variety together  with a line bundle.
The zonotope
$$
\b(\Phi):=\{\sum_{i=1}^N t_i \phi_i;0\leq t_i\leq 1\}
$$
plays  an important role in the  ``continuity properties" of our formulae in the discrete case, where,
for a tope $\tau$,
the ``neighborhood" of  $\tau \cap \Lambda$ is  the fattened tope  $(\tau-\b(\Phi))\cap \Lambda$.
In Section \ref{Integrals_and_discrete_sums},
 where we study discrete sums over partition polytopes,
we recover the quasi-polynomiality  over fattened topes  which was previously
obtained  in  \cite{dahmen-micchelli-1988},
\cite{szenes-vergne-2002}, \cite{deconcini-procesi-vergne-dahmen-micchelli-2008},
as well as wall crossing formulae.
Remarkably, the proofs  which we give  in the present  article are
based only on the Brianchon-Gram decomposition of a polytope and some set theoretic computations.

\bigskip

Our original motivation for the present work was to understand
Brion's formula when specialized at a degenerate point.
Let $\p\subset V$ be a full-dimensional polytope in a
vector space $V$ equipped with a lattice $V_\Z$.
 Consider the discrete sum
$$
S(\p)(\xi)=\sum_{x\in V_\Z\cap \p}e^{\ll\xi,x\rr}.
$$
Brion's theorem expresses the analytic function $S(\p)(\xi)$ as the sum
\begin{eqnarray}\label{eq:brion-generating-function}
S(\p)(\xi)&=&\sum_{s\in \CV(\p)}S(s+\c_s)(\xi).
\end{eqnarray}
Here $s$ runs over the set of vertices of $\p$,
and $s+\c_s$ is the tangent cone at $\p$ at the vertex $s$.
Now the function $S(s+\c_s)(\xi)$ is a meromorphic function of $\xi$.
Its  poles are the points $\xi\in V^*$  such that $\xi$ vanishes on some edge generator of the cone $\c_s$
 (or equivalently,
such that $\xi$ takes the same value at the vertex $s$ and some adjacent vertex $s'$ of $\p$).

It is well known that if $\xi$ is regular with respect to $\p$,
 (i.e. $\langle\xi,s\rangle\neq \langle\xi,s'\rangle$ for adjacent vertices),
Brion's formula is the combinatorial translation
of the localization formula in equivariant cohomology \cite{MR685019},
in a case where the fixed points are isolated.
The case where $\xi$ is not regular corresponds to the case where the variety of  fixed points has components
of positive dimension.
We obtain indeed the combinatorial translation of the localization formula in this degenerate case.
The vertices must be replaced by the faces on which $\xi$ is constant which are maximal with respect to
this property. For such a face $\f$, the tangent cone must be replaced by the transverse cone to $\p$ along
$\f$. However, the formula  is ``nice"  only under some conditions
 (satisfied for example when the  polytope $\p$ is simple).
The formula
involves  the ``analytic continuation" of the face $\f$
obtained by slicing the polytope $\p$ by  affine subspaces parallel to $\f$,
Fig.\ref{fig:Brianchon-Gram continuation of a face}

Finally, in the last section, we sketch  the relation of this work
with the cohomology of line bundles over a toric variety.
In the case where the $\phi_i$'s generate a lattice in $F$,
a $\Phi$-tope $\tau$ gives rise to a toric variety $M_\tau$.
Then,
 the value of the function $\varchenko(\Phi,\tau)$ computed at  a point
 $m\in \Z^N\subset \R^N$ is the multiplicity of the character $m$
 in the alternate sum of the cohomology groups of the line bundle $L_\lambda$ on $M_\tau$
which corresponds to $\lambda=\sum_i m_i\phi_i$.
In other words, the function  $\varchenko(\Phi,\tau)$ induces on each affine space $V(\Phi,\lambda)$ the constructible function associated by Morelli \cite{MR1234308} to the line bundle $L_\lambda$ on $M_\tau$.

 The continuity result (Corollary \ref{th:continuity-on-closed-tope}) implies that the function $\lambda\to \dim H^0(M_\tau,\lambda)$ is a quasi-polynomial  on the fattened tope $(\tau-\b(\Phi))\cap \Lambda$.
  We give some examples of computations in the last section.

These results have been presented by the second author M.V. in  the Workshop: Arrangements of Hyperplanes held in Pisa in June 2010.
 M.V. thanks C. De Concini, H. Schenck and  M. Wachs
  for numerous discussions on posets, cohomology of line bundles on toric varieties,
  during this special period, and thanks the Centro de Giorgi for providing such a stimulating atmosphere.
 The idea of this  article  arose  while both authors were enjoying a
Research in Pairs stay at Mathematisches
Forschungsinstitut Oberwolfach in March/April 2010.
The support of MFO is   gratefully acknowledged.

We thank P. Johnson for drawing our attention to Varchenko's work and
to  the paper \cite{cavalieri-2010} where applications of Varchenko's work
to  wall crossing formulae for Hurwicz numbers are obtained.

\section*{List of notations}

\[\begin{array}{ll}
\ANA(b), \ANA(\tau,b)   &\mbox{a function on  $\R^d$,}\\
                         & \mbox{the  analytic continuation of a polytope}\\
{[E]}   &\mbox{characteristic function of a set $E\subseteq R^d$ or $E\subseteq \R^N$}\\
 F      &\mbox{r-dimensional real vector space; $\lambda\in F$}\\
 \Phi   &\mbox{a sequence of  N non zero vectors $\phi_i$ in $F$ }\\
e_i     & \mbox{canonical basis of $\R^N$}\\
x_i     & \mbox{coordinates functions on $\R^N$}\\
M       &\mbox{the map $\R^N\to F; M(e_i)=\phi_i$}\\
V(\Phi) ,V  & \mbox{$\{x\in\R^N; \sum_i x_i\phi_i=0\}$}\\
d& N-r; \mbox{the dimension of $V$}\\
V(\Phi,\lambda) & \mbox{$\{x\in\R^N; \sum_i x_i\phi_i=\lambda\}$}\\
\p(\Phi,\lambda)   & \mbox{$\{x\in\R^N; x_i\geq 0; \sum_i x_i\phi_i=\lambda\}$}\\
\mbox{Partition polytope}  &\mbox{a polytope $\p(\Phi,\lambda)$}\\
\Lambda    &\mbox{lattice in $F$;  $\lambda\in \Lambda$}\\
\mbox{ $k(\Phi)$}  &\mbox{the function $\lambda\to {\rm cardinal}\,\,\p(\Phi,\lambda)$}\\
\mbox{Partition function} &\mbox{the function $k(\Phi)$}\\
Q&  \mbox{ the standard quadrant $\{x\in\R^N; x_i\geq 0\}$}\\
I,J,K, A, B&\mbox{subsets of $\{1,2,\ldots,N\}$}\\
I^c&\mbox{complementary subsets to $I$ in $\{1,2,\ldots,N\}$}\\
\Phi_I& \mbox{$(\phi_i,i\in I)$}\\
\c(\Phi), \c(\Phi_I) &\mbox{cone generated by $\Phi,  \Phi_I$}\\
\a(K)   &\mbox{the cone in $\R^N$ defined as }\\
        & \{ x\in \R^N;  x_j\geq 0 \mbox{ for } j\in K^c\}\\
\a_0(K)&\mbox{the cone $V\cap \a(K)$}\\
 \t_K(\Phi,\lambda)&\mbox{$\a(K)\cap V(\Phi,\lambda)$}\\
\mbox{ $\Phi$-basic subset $I$ }&\mbox{ a subset $I$ such that $\phi_i$, $i\in I$, is a basis of $F$}\\
\mbox{$\Phi$-generating subset $I$ }&\mbox{ a subset $I$ such that $\phi_i$, $i\in I$, generates  $F$}\\
  \mathcal{B}(\Phi) &\mbox{the set of $\Phi$-basic subsets}\\ \mathcal{G}(\Phi) &\mbox{the set of  $\Phi$-generating subsets}\\
\rho_{\Phi,I}&\mbox{$\rho_{\Phi,I}: \R^N\to V(\Phi)$ with kernel $\oplus_{i\in I} \R e_{i}$}\\
g_j^I&\mbox{$\rho_{\Phi,I}(e_j), j\in I^c$}\\
{\rm wall} \, H &\mbox{hyperplane in $F$ generated by $r-1$ vectors in $\Phi$}\\
\mbox {regular } \lambda & \mbox{$\lambda$ does not belong to any wall $H$}\\
{\rm tope}\, \tau   & \tau\subset F, \mbox{a connected component}\\
                    &\mbox{of the set of regular elements}\\
\mathcal{B}(\Phi,\tau) &\mbox{the set  of $\Phi$-basic subsets $I$ such that $\tau\subset \c(\Phi_I)$}\\
\mathcal{G}(\Phi,\tau)  &\mbox{the set of  $\Phi$-generating subsets $I$ such that $\tau\subset \c(\Phi_I)$}\\
{\rm arrangement} \CH(\lambda) &\mbox{ the collection of the  hyperplanes $x_i=0$ in $V(\Phi,\lambda)$}
\end{array}\]

\[\begin{array}{ll}
\mbox{vertex $s$}   &\mbox{ of the arrangement} \CH(\lambda); \mbox{$s$ belongs to} \\
                    &\mbox{  $d$ hyperplanes of $\CH(\lambda)$}\\
s_I(\Phi, \lambda)  &\mbox{ the vertex of  $\CH(\lambda)$ such that $s_j=0$ for $j\in I^c$}\\
\f_I(\Phi, \lambda), \f_I   &\mbox{the face of $\p(\Phi,\lambda)$  indexed by $I$; defined by }\\
                            & \p(\Phi,\lambda)\cap\{x_j=0, j\in I^c\}\\
 \t_{\aff}(\p, \f)&\mbox{tangent affine cone to a polytope $\p$ at the face $\f$}\\
\varchenko(\Phi,\tau)&\mbox{$\sum_{I\in \CG(\Phi,\tau)}(-1)^{|I|-\dim
F}\prod_{j\in I^c}[x_j\geq 0]$}\\
X(\Phi, \tau)(p,q)&\mbox{ $\sum_{I\in \CG(\Phi,\tau)}(-1)^{|I|-\dim
F}\prod_{j\in I^c}p_j \prod_{i\in I}(p_i+q_i)$ }\\
w_B&\mbox{$\prod_{j\in B^c}p_j \prod_{i\in B}q_i$}\\
W&\mbox{space of polynomials with basis $w_B$}\\
\geom&\mbox{ substituting $p_i=[x_i\geq 0]$, $q_j=[x_j<0]$ in $w_B$}\\
\b(\Phi)    &\mbox{the zonotope generated by } \Phi; \\
            & \{\sum_{i=1}^N t_i \phi_i;0\leq t_i\leq 1\} \\
Q_{\rm neg}^B   & \mbox{$\{x=(x_i),  x_i < 0 $ for $ i\in B$ ; $x_i\geq 0 $ for $i\in B^c\}$}\\
\Phi_{{\rm flip}}^B &\mbox{ the sequence } (\sigma_i \phi_i, 1\leq i\leq N), \mbox{ where } \\
                    & \mbox{ $\sigma_i=-1$ if $i\in B$;  $\sigma_i=1$ if $i\notin B$}\\
\widetilde{\c}(\Phi_{{\rm flip}}^B)&\mbox{$\{\sum_i x_i \phi_i , x\in Q_{\rm neg}^B\}$}\\
  \widetilde{\c}_\Z(\Phi_{{\rm flip}}^B)&\mbox{$\{\sum_i x_i \phi_i, x\in
Q_{\rm  neg}^B\cap \Z^N\}$}\\
 \beta& \mbox{linear form on $\R^N$}\\
 K^{c,+}_\beta&\mbox{$\{ j\in K^c;\langle\beta, g_j^K \rangle > 0\}$}\\
K^{c,-}_\beta&\mbox{$\{ j\in K^c;\langle\beta, g_j^K \rangle < 0\}$}\\
\a(K,\beta)&\mbox{$\{ x\in \R^N;
x_i\geq 0\,\,, i\in {K_\beta^c}^+;
x_i< 0 \,\,, i\in {K_\beta^c}^- \}$}\\
\a_0(K,\beta)&\mbox{the cone $V\cap \a(K,\beta)$}\\
Y(\Phi,\tau,\beta)&\mbox{$
\sum_{K\in \CB(\Phi,\tau)}(-1)^{|{K^c_\beta}^-|}\prod_{i\in
{K^c_\beta}^+}p_i \prod_{i\in  {K^c_\beta}^-}q_i \prod_{i\in K
}(p_i+q_i)$}\\
\p(\Phi,A,\lambda)&\mbox{$\{x\in V(\Phi,\lambda), \;\;  x_i>0 \mbox{ for }
i\in A, x_i\geq 0 \mbox{ for } i\in A^c\}$}\\
\p_{\rm flip}(\Phi,A,\lambda)&\mbox{$\{x\in V(\Phi,\lambda)\;\; x_i<0  \mbox{ for }  i\in A, x_i\geq 0
\mbox{ for } i\in A^c\}$}\\
\end{array}\]

\newpage
\section{Definition of the analytic continuation}

\subsection{Some cones related to a partition polytope}

In this article, there will be plenty of cones.
A cone will always  be an affine polyhedral convex cone.
A cone  will be called  flat if it contains  an  affine line,
otherwise, it will be  called salient.

  Let $F$  be a real vector space of dimension $r$, and let
  $  \Phi=(\phi_1,\ldots, \phi_N)$
 be a sequence of  $N$ non zero elements of  $F$.
We assume that $\Phi$ generates $F$ as a vector space.

The standard basis of $\R^N$  is denoted by  $e_i$ with dual basis the linear forms $x_i$.
We denote by $M:\R^N \to F$ the surjective map
which sends the vector $e_i$ to the vector $\phi_i$.
The kernel of $M$ is a subspace of dimension $d=N-r$ which will be denoted by $V(\Phi)$ or simply $V$ when $\Phi$ is understood.
$$
V(\Phi):=\{x\in \R^N; \sum_i x_i\phi_i=0\}.
$$

We denote by $Q$  the standard quadrant
$$
Q:=\{x\in \R^N; x_i\geq 0\}.
$$
The cone $\c(\Phi)$ generated by $\Phi$ is  the image of $Q$ by $M$.
Assume that  the cone $\c(\Phi)$ is salient.
In other words, there exists a linear form $a\in F^*$
such that $\langle a,\phi_i \rangle > 0$ for all $1\leq i\leq N$.
This is also equivalent to the fact that $V\cap Q=0$.

If $I$ is a subset of $\{1,2,\ldots, N\}$, we denote by $I^c$ the complementary subset
to $I$ in   $\{1,2,\ldots, N\}$.
\begin{definition}\label{def:coneI}
If $I$ is a subset of $\{1,2,\ldots, N\}$, let
$$
\a(I)=\{ x\in \R^N;
x_j\geq 0 \mbox{ for } j\in I^c\}
$$
and let
$$
\a_0(I)=V\cap \a(I)=\{ x\in V;\; x_j\geq 0 \mbox{ for } j\in I^c\}
$$
be the intersection of $V$ with the cone $\a(I)$.
\end{definition}
Thus $\a(I)$ is the product of the positive quadrant in the variables $I^c$,
with a vector space of dimension $|I|$.
The cone $\a(I)$ is called an angle by Varchenko.
It is never salient, except if $I=\emptyset$.
With this notation, the positive quadrant  $Q$ is  $\a(\emptyset)$.

We now analyze the cone  $\a_0(I)\subseteq V$.
A subset $I\subseteq \{1,2,\ldots, N\}$ such that  $\{\phi_i,i\in I\}$ is  a basis of $F$ will be called
$\Phi$-basic. We denote by $\mathcal B(\Phi)$ the set of $\Phi$-basic subsets.
 A subset $I\subseteq \{1,2,\ldots, N\}$ such that  $\{\phi_i,i\in I\}$ generates  $F$ will be called
$\Phi$-generating. We denote by $\mathcal G(\Phi)$ the set of $\Phi$-generating subsets.

Let $I$ be $\Phi$-basic. Then the cardinal of $I^c$ is $d=N-r$ and
the restrictions  to $V$ of the linear forms $x_j$, with $j\in I^c$, form a basis of $V^*$.
Hence  $\a_0(I)$ is a   cone of dimension $d$ in $V$ with $d$ generators,
in other words a simplicial cone of full dimension in the vector space $V$.
Let us describe the edges of the simplicial  cone $\a_0(I)$.
 We have
  $$
  \R^N=V(\Phi)\oplus (\oplus_{i\in I}\R e_i),
  $$
  and  we denote by $\rho_{\Phi,I}$ the corresponding linear projection
$\R^N \to V$.
For $j\in I^c$, we write $\phi_j=\sum_{i\in I} u_{i,j} \phi_i$.

\begin{lemma}\label{lemma:later}
Let $I$ be $\Phi$-basic. For $j\in I^c$, let
 $$
g_j^I= \rho_{\Phi,I}(e_j)=e_j-\sum_{i\in I}u_{i,j} e_i.
$$
Then the $d$ vectors $g_j^I$ are the generators  of the edges of the simplicial cone $\a_0(I)$.
\end{lemma}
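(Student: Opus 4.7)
The plan is to verify that the $d$ vectors $g_j^I$, $j\in I^c$, form a basis of $V$ contained in $\a_0(I)$, and that the positive cone they generate is exactly $\a_0(I)$. Since $\a_0(I)$ sits in $V$ which has dimension $d=N-r=|I^c|$, this will identify $\a_0(I)$ as the simplicial cone spanned by the $g_j^I$ and show that these vectors are indeed the edge generators.

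First I would set up coordinates adapted to the decomposition $\R^N=V\oplus(\oplus_{i\in I}\R e_i)$. The key observation is that for $k\in I^c$, the $k$-th coordinate of $g_j^I=e_j-\sum_{i\in I}u_{i,j}e_i$ is simply $\delta_{jk}$, because the subtracted terms only involve indices in $I$. This immediately gives two facts: the vectors $\{g_j^I\}_{j\in I^c}$ are linearly independent (their matrix of $I^c$-coordinates is the identity), hence form a basis of $V$; and each $g_j^I$ lies in $\a_0(I)$, since all its $I^c$-coordinates are either $0$ or $1$, hence nonnegative, while it lies in $V$ by construction of $\rho_{\Phi,I}$.

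Next I would check that the positive span of the $g_j^I$ exhausts $\a_0(I)$. Any $x\in V$ has a unique expansion $x=\sum_{j\in I^c}c_j\, g_j^I$ in the basis just exhibited, and reading off the $k$-th coordinate for $k\in I^c$ yields $x_k=c_k$. Therefore $x\in\a_0(I)$, i.e.\ $x_k\geq 0$ for every $k\in I^c$, if and only if $c_j\geq 0$ for every $j\in I^c$. This shows $\a_0(I)=\sum_{j\in I^c}\R_{\geq 0}\,g_j^I$, a simplicial cone of dimension $d$ in $V$, whose edges are precisely the rays $\R_{\geq 0}\, g_j^I$.

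I do not anticipate a real obstacle here: the only thing that needs care is keeping track of which coordinates lie in $I$ versus $I^c$, so that the projection formula $\rho_{\Phi,I}(e_j)=e_j-\sum_{i\in I}u_{i,j}e_i$ is correctly read as having trivial $I^c$-part except at index $j$. Once that bookkeeping is in place, the lemma reduces to the standard fact that a linear isomorphism sending the standard basis to prescribed vectors sends the standard positive orthant to the simplicial cone generated by those vectors.
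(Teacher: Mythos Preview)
Your proof is correct. The paper states this lemma without proof, presumably regarding it as immediate from the definitions; your argument supplies exactly the details one would expect, and the key observation---that the $I^c$-coordinates of $g_j^I$ form the identity matrix---is the right one.
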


Now, let $I$ be a generating subset.
Then the restrictions to $V$ of the linear forms $x_j$, $j\in I^c$,
are linearly independent elements of  $V^*$.
The  cone $\a_0(I)$ is again the  product of a simplicial cone of dimension $|I^c|$ by  a vector space of dimension $|I|-r$
More precisely, if $K$ is any $\Phi$-basic subset contained in $I$,
the cone $\a_0(I)$ is the product of the cone generated by $\rho_{\Phi,K}(e_j), j\in I^c,$
 by  the vector space generated by   $\rho_{\Phi,K}(e_i)$ with $i\in I \setminus K$.

\subsection{Vertices and faces of a partition polytope}

Recall that, for $\lambda\in F$, we denote by $V(\Phi,\lambda)\subset \R^N$ the affine subspace
$$\{x\in \R^N; \sum_i x_i\phi_i=\lambda\}.$$
The intersections  of the coordinates hyperplanes $\{x_i=0\}$ with $V(\Phi,\lambda)$
form an arrangement  $\mathcal H(\lambda)$ of $N$ affine hyperplanes of $V(\Phi,\lambda)$.

By definition, a vertex of this arrangement is a point $s\in V(\Phi,\lambda)$
 such that $s$ belongs to at least $d$ independent hyperplanes.
The arrangement $\mathcal H(\lambda)$ is called regular if no vertex belongs to more than $d$ hyperplanes.
A $\Phi$-wall $H$ is a hyperplane of $F$ spanned by $r-1$ linearly independent elements of $\Phi$.
Thus $\mathcal H(\lambda)$ is regular
 if and only if $\lambda$ does not belong to any $\Phi$-wall, that is, if $\lambda$ is regular.

By definition, a face  of the arrangement $\mathcal H(\lambda)$
  is the set of elements  $x\in V(\Phi,\lambda)$ which satisfy
  a subset of the  set of relations $\{x_i\geq 0, x_j\leq 0, x_k=0\}$.

Recall that the partition polytope $\p(\Phi, \lambda)$
 is the intersection of the affine space $V(\Phi,\lambda)$ with the positive quadrant $Q$.
 Thus it is  a bounded face of the arrangement of hyperplanes $\mathcal H(\lambda)$.

If  $I\subset \{1,\dots,N\}$ is $\Phi$-basic, then $\lambda$ has a unique decomposition
$\lambda=\sum_{i\in I}x_i \phi_i$.
If $\lambda$ is regular,   $x_i\neq 0$  for all $i$.
\begin{definition}
Let $I$ be  a $\Phi$-basic subset, let
 $\lambda=\sum_{i\in I}x_i \phi_i$.
Then  $s_I(\Phi, \lambda)$ is the vertex of  the arrangement $\mathcal H(\lambda)$ defined
by $s_I(\Phi, \lambda)=(s_i)$ where
$s_i=x_i$ if $i\in I$, and $s_j=0$ if $j\in I^c$.
\end{definition}
Observe that $s_I(\Phi, \lambda)$ depends linearly on $\lambda$.

If $\lambda$ is regular,   the vertices of the arrangement
$\mathcal H(\lambda)$ are in one to one correspondence  $I\mapsto s_I(\Phi, \lambda)$
with the set $\mathcal B(\Phi)$ of $\Phi$-basic subsets of $\{1,\dots,N\}$.

\begin{definition}
For $I$ a subset of $\{1,2,\ldots,N\}$,
 define
 $$
 \t_I(\Phi,\lambda)=\a(I)\cap V(\Phi,\lambda).
 $$
 \end{definition}
If $I$ is a $\Phi$-basic subset, then the  cone $\t_I(\Phi,\lambda)$
  is the shift $s_I(\Phi, \lambda)+\a_0(I)$
of the \textbf{fixed simplicial cone} $\a_0(I)$
 by the vertex $s_I(\Phi, \lambda) $ which depends linearly of $\lambda$.
 \begin{equation}\label{moveI}
\t_I(\Phi,\lambda)=\a(I)\cap V(\Phi,\lambda)=s_I(\Phi, \lambda)+\a_0(I).
\end{equation}
Similarly, if $I$ is a $\Phi$-generating subset, choose a $\Phi$-basic subset  $K$ contained in $I$,
then the cone $\t_I(\Phi,\lambda)$
  is the shift  of the \textbf{fixed cone}
$\a_0(I)$  by the vertex $s_K(\Phi,\lambda) $ which depends linearly of $\lambda$.
\begin{equation}\label{moveIg}
\t_I(\Phi,\lambda)=\a(I)\cap V(\Phi,\lambda)=s_K(\Phi,\lambda)+\a_0(I).
\end{equation}

So, one can  say  that the set
 $\t_I(\Phi,\lambda)$ varies analytically with $\lambda$, whenever $I$ is a generating subset.
 At least it ``keeps the same shape".
 This is not the case when $I$ is not generating, for example when $I=\emptyset$.
 Indeed $\t_\emptyset(\Phi,\lambda)$ is the partition polytope $\p(\Phi,\lambda)$,
 and it certainly does not vary ``analytically".

We now analyze the faces of the partition polytope $\p(\Phi,\lambda)$
and the corresponding tangent cones.

If $\tau$ is a $\Phi$-tope,  we denote by
$\CB(\Phi,\tau)\subseteq \CB(\Phi)$ the set of basic subsets
$I$ such that $\tau$ is contained in the cone $\c(\phi_I)$ generated by the $\phi_i, i\in I$.
In other words, the equation
$\lambda=\sum_{i\in I} x_i\phi_i$ can be solved with positive $x_i$. Equivalently,
the corresponding vertex $s_I(\Phi, \lambda)$ belongs to the polytope $\p(\Phi,\lambda)$.
Thus when $\lambda$ is regular, there is a one-to-one correspondence between the elements $I\in \mathcal B(\Phi,\tau)$ and the vertices of the polytope $\p(\Phi,\lambda)$.

When $\lambda$ belongs to the closure of a tope $\tau$,
every vertex  of $\p(\Phi,\lambda)$ is still of
the form $s_I(\Phi, \lambda)$ with $I\in \mathcal B(\Phi,\tau)$,
but two $\Phi$-basic subsets can give rise to the same vertex.

Let $I\in \mathcal B(\Phi,\tau)$.
Assume that $\lambda$ is regular, so that all coordinates $s_i$ of $s_I(\Phi, \lambda)$ with $i\in I$ are positive.
Then it is clear that the tangent cone  to $\p(\Phi,\lambda)$ at the vertex $s_I(\Phi, \lambda)$
is the cone determined by the inequations $x_i\geq 0$ for $i\in I^c$,
while the sign of  the coordinates $x_i$ with $i\in I$ are arbitrary,
In other words, it is the simplicial affine cone  $\t_I(\Phi,\lambda)$

We denote by $\CG(\Phi,\tau)\subseteq \CG(\Phi)$
the set of generating subsets $I$ such that $\tau$
is contained in the cone $\c(\phi_I)$ generated by the $\phi_i, i\in I$.

If $I\in \CG(\Phi,\tau)$,
the intersection of $\p(\Phi,\lambda)$ with $\{x_j=0, j\in I^c\}$
is a face $\f_I(\Phi,\lambda)$ of dimension $|I|-r$ of the polytope $\p(\Phi,\lambda)$.
The vertices of this face are the points
 $s_K(\Phi,\lambda)$ corresponding to all the  $\Phi$-basic subsets $K$ contained in $I$.
 The affine tangent cone $\t_{\aff}(\p(\Phi,\lambda),f_K(\Phi,\lambda))$
  to the polytope $\p(\Phi,\lambda)$ along the face $f_K(\Phi,\lambda)$ is
 $$
 \t_{\aff}(\p(\Phi,\lambda), f_K(\Phi,\lambda))=\t_I(\Phi,\lambda)=\a(I)\cap V(\Phi,\lambda).
 $$

\subsection{The  Brianchon-Gram function}
Summarizing, for $\lambda\in \tau$, there is a one-to one correspondence
between the set of faces of the polytope $\p(\Phi,\lambda)$ and the set $\mathcal G(\Phi,\tau)$.
The Brianchon-Gram theorem implies, for $\lambda\in \tau$,
$$
\suppresschi{[\p(\Phi,\lambda)]}=\left(\sum_{I\in \mathcal G(\Phi,\tau)} (-1)^{|I|-\dim F} \suppresschi{[\a(I)]}\right) \suppresschi{[V(\Phi,\lambda)]}.
$$
When $\lambda$ varies,  the right hand side is obtained
by intersecting a number of fixed cones in $\R^N$   with the varying affine
space $V(\Phi,\lambda)$.
It is  natural to introduce the  function on $\R^N$
\begin{eqnarray}\label{eq:geometric_brianchon_gram_}
\nonumber \varchenko(\Phi,\tau)&=&  \sum_{I\in \CG(\Phi,\tau)}(-1)^{|I|-\dim
F}\suppresschi{[\a(I)]}\\
&=& \sum_{I\in \CG(\Phi,\tau)}(-1)^{|I|-\dim
F}\prod_{j\in I^c}\suppresschi{[x_j\geq 0]}.
\end{eqnarray}
that is,  the Geometric Brianchon-Gram  function which we mentioned  in the introduction.

For  $\lambda\in \tau$, we have
\begin{equation}\label{eq:Xgeom_and_BGtheorem}
\varchenko(\Phi,\tau)\suppresschi{[V(\Phi,\lambda)]}=\suppresschi{[\p(\Phi,\lambda)]},
\end{equation}
 the characteristic function of the partition polytope $\p(\Phi,\lambda)\subset \R^N$.


Let us now consider the function $\varchenko(\Phi,\tau)\suppresschi{[V(\Phi,\lambda)]}$  for any $\lambda\in F$.

By Equations (\ref{moveI}) and (\ref{moveIg}), we have
$$\varchenko(\Phi,\tau)\suppresschi{[V(\Phi,\lambda)]}=
\sum_{I\in \CG(\Phi,\tau)}(-1)^{|I|-\dim
F}\suppresschi{[s_K(\Phi,\lambda)+\a_0(I)]}.$$

Here, for each $I\in \CG(\Phi,\tau)$, we choose  $K\subset I$, a basic subset contained in  $I$.

We thus see that
$\varchenko(\Phi,\tau)\suppresschi{[V(\Phi,\lambda)]}$
 is constructed as follows.
Start from the polytope $\p(\Phi,\lambda_0)$ with $\lambda_0\in \tau$,
write the characteristic function of $\p(\Phi,\lambda_0)$ as
the alternate sum of its tangent cones at faces, and when moving $\lambda$ in the whole space $F$,
follow these cones by moving their vertex linearly in function of $\lambda$.
As all the sets $I$ entering in the formula for $\varchenko(\Phi,\tau)$
are generating, the individual pieces $\a(I) \cap V(\Phi,\lambda)=s_K(\Phi,\lambda)+\a_0(I)$  keep the same shape.

It is clear that the support of the function $\varchenko(\Phi,\tau)\suppresschi{[V(\Phi,\lambda)]}$
is a union  of faces of various dimensions of  the arrangement $\mathcal H(\lambda)$.
We will show that it is 
is a union of bounded faces of this arrangement,   for any $\lambda\in F$  (Corollary \ref{co:bounded}). 
\begin{remark}\label{topes and chambers}
Chambers rather than topes are relevant to wall crossing.
However, we preferred to use topes, because topes are naturally
related to the whole set of vertices of the arrangement  $\CH(\lambda)$.
A chamber is a connected component
of the complement in $F$ of the union of all the {\bf cones} spanned by $(r-1)$-elements of $F$.
Chambers are bigger than topes, the closure of a chamber is a  union of closures of topes.
See Figure \ref{fig:3topes}.
But if $\tau_1$ and $\tau_2$ are contained in the  same chamber, we have $\CG(\Phi,\tau_1)=\CG(\Phi,\tau_2)$,  hence
$X(\Phi,\tau_1)=X(\Phi,\tau_2).$
 \end{remark}

\begin{figure}[!h]
\begin{center}
  \includegraphics[width=1.5 in]{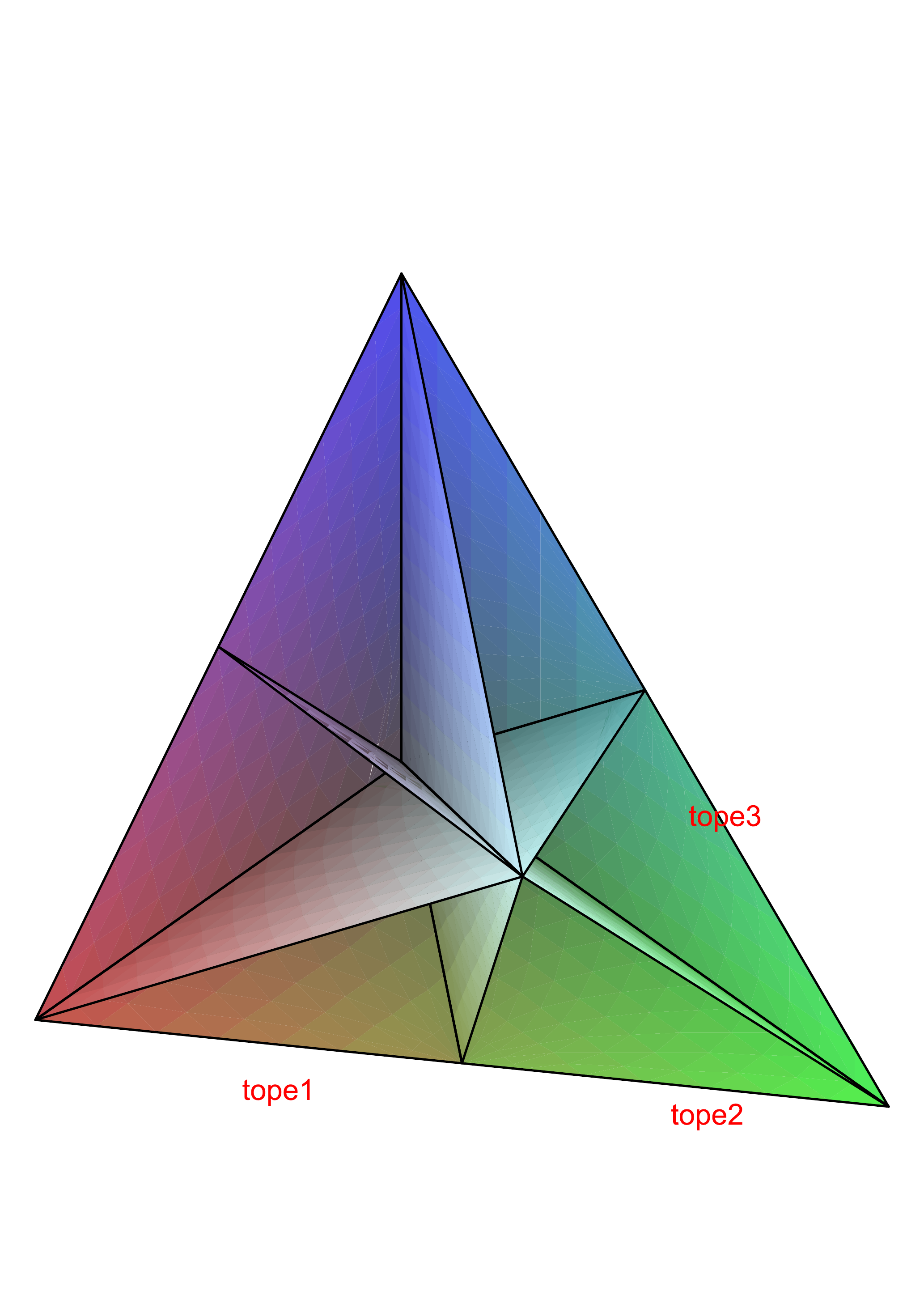}\includegraphics[width=1.5 in]{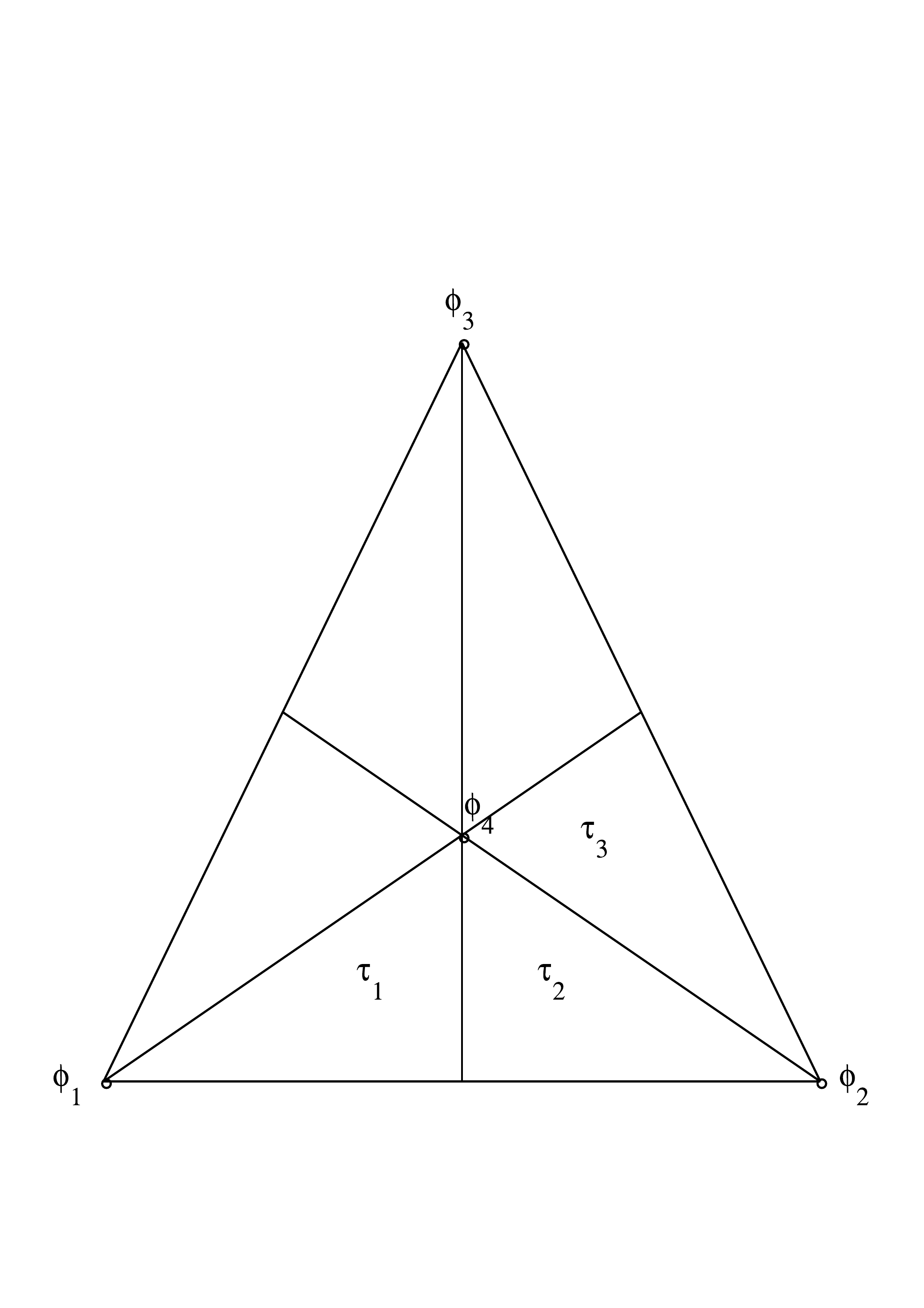}
  \includegraphics[width=1.5 in]{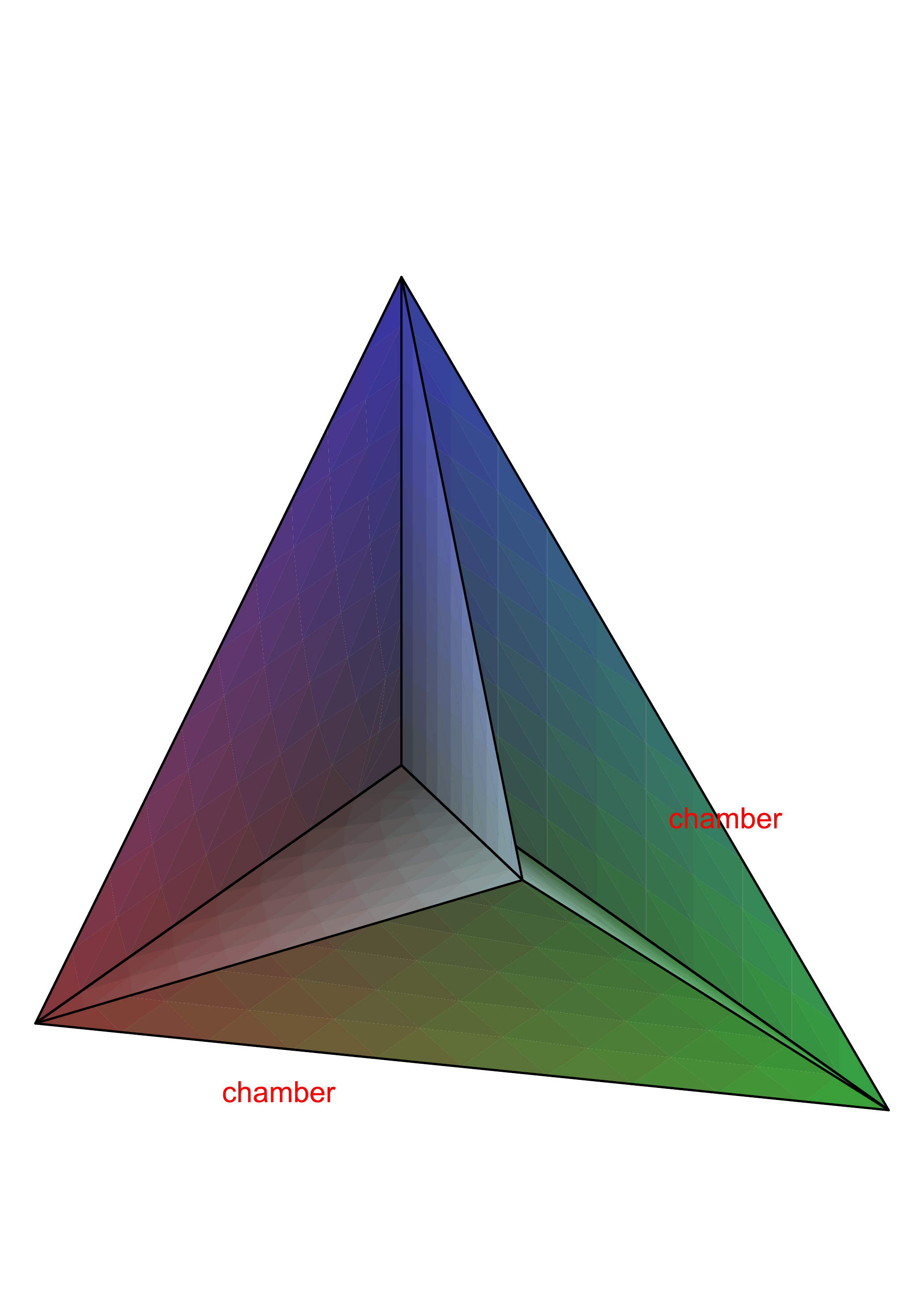}
  \caption{ Left, topes for $\Phi=(\phi_1,\phi_2,\phi_3,\phi_1+\phi_2+\phi_3)$.
  Right, chambers.}
  \label{fig:3topes}
  \end{center}
\end{figure}

\section{Signed sums of quadrants}

\subsection{Continuity properties of the Brianchon-Gram  function}

Recall that we defined in the introduction   the following polynomial in the
variables $p_i$ and $q_i$.
\begin{definition}  Let $\tau$ be a $\Phi$-tope.
The Combinatorial Brianchon-Gram  function associated to the pair
$(\Phi,\tau)$ is
\begin{equation}\label{Brianchon-Gram-combinatoire}
 X(\Phi, \tau)(p,q)= \sum_{I\in \CG(\Phi,\tau)}(-1)^{|I|-\dim
F}\prod_{j\in I^c}p_j \prod_{i\in I}(p_i+q_i) .
\end{equation}
\end{definition}

If the tope $\tau$ is not contained in $\c(\Phi)$, the set
$\CG(\Phi,\tau)$ is empty and $X(\Phi,\tau)=0$.
Otherwise, if $\tau\subset \c(\Phi)$, the sum  defining $X(\Phi,\tau)$
is indexed by all the faces of the  polytope $\p(\Phi,\lambda_0)$ (for any choice of $\lambda_0\in \tau$).

We recover $\varchenko(\Phi,\tau)$ when we substitute
$\suppresschi{[x_i\geq 0]}$ for $p_i$ and $\suppresschi{[x_i< 0]}$ for $q_i$ in $X(\Phi, \tau)(p,q)$ (so that $p_i+q_i=1$).

The Combinatorial Brianchon-Gram function is a particular element of
the  space $W$ below.
\begin{definition} Let $W$ be the subspace of
$\Q[p_1,\dots, p_N,q_1,\dots, q_N]$ which consists of linear
combinations of the monomials
$$
w_B= \prod_{j\in B^c}p_j \prod_{i\in B}q_i
$$
where $B$ runs over the subsets of $\{1,\dots,N\}$.
\end{definition}
Thus we have
\begin{equation}\label{eq:coeffs_combinatorial_BG}
X(\Phi,\tau)=\sum_B z(\Phi,\tau,B) w_B,
\end{equation}
with coefficients $z(\Phi,\tau,B)\in \Z$.

\noindent \emph{Remark}: For the subset $B=\{1,2,\ldots,N\}$, the coefficient  $z(\Phi,\tau,B)$  is $(-1)^d=(-1)^{N-r}$.

\begin{example}[The standard
knapsack]\label{standard_simplex} Let $F=\R$, $\phi_i=1$ for
$i=1,\dots,N$, and  $\tau=\R_{>0}$. From the usual
inclusion-exclusion relations, we get
\begin{equation}\label{standard-simplex}
X(\Phi,\tau)= p_1\cdots p_N -(-1)^N q_1\cdots q_N.
\end{equation}
\end{example}

An element in $W$ gives a function on $\R^N$ by the following substitution.
\begin{definition}
We denote by $\geom$ the map  from $W$ to the space of functions
on $\R^N$ defined by substituting
$\suppresschi{[x_i\geq 0]}$ for $p_i$ and $\suppresschi{[x_j< 0]}$ for $q_j$.
\end{definition}
Later, we will use other substitutions.

\bigskip

We  prove now some  ''continuity"  properties of the Combinatorial
Brianchon-Gram function when $\lambda$ reaches the closure of the tope $\tau$. Actually, these properties are shared by
any element of the space $W$ which satisfies the hypothesis of
Proposition \ref{th:quadrants} below.
We first introduce some definitions and prove an easy lemma.

\begin{definition}
The zonotope $\b(\Phi)$ is the subset of $F$ defined by
$$\b(\Phi)=\{\sum_{i=1}^N t_i \phi_i;0\leq t_i\leq 1\}.$$
 \end{definition}

When $\tau$ is a tope contained in $\c(\Phi)$,
the domain $\tau-\b(\Phi):=\{x-y,x\in \tau,y\in \b(\Phi)\}$
will play a crucial role in  ``continuity properties" of our functions.
Remark that $\tau-\b(\Phi)$  is a fattening of $\tau$
which  contains the closure of the tope $\tau$.
Usually the set of integral points in $\tau-\b(\Phi)$ is larger than the set of integral points in $\overline \tau$.

\begin{definition}\label{def:flipped_Phi}
 For $B\subseteq \{1,\dots,N\}$,

 $\bullet$
$Q_{\rm neg}^B\subset \R^N$ is the semi-closed quadrant
$$
Q_{\rm neg}^B=\{x=(x_i),  x_i < 0 \mbox{ for } i\in B , x_i\geq 0 \mbox{ for }
i\in B^c\},
$$

$\bullet$
$\Phi_{\rm flip}^B$ is  the sequence $[\sigma_i \phi_i, 1\leq i\leq N]$, where
$\sigma_i=-1$ if $i\in B$ and $\sigma_i=1$ if $i\notin B$.

$\bullet$
$\widetilde{\c}(\Phi_{\rm flip}^B)\subset F$ is the semi-closed cone
$$
\widetilde{\c}(\Phi_{\rm flip}^B)=\{\sum_i x_i \phi_i , x\in Q_{\rm neg}^B\}
$$

and

  $$ \widetilde{\c}_\Z(\Phi_{\rm flip}^B)=\{\sum_i x_i \phi_i, x\in
Q_{\rm neg}^B\cap \Z^N\}.
$$
\end{definition}

With this notation, the standard quadrant  is $$Q=Q_{\rm neg}^{\emptyset}.$$

Remark that the closure of the semi-closed cone
$\widetilde{\c}(\Phi_{\rm flip}^B)$ is the closed cone $\c(\Phi_{\rm flip}^B)$.

We recall the following lemma.

\begin{lemma}\label{lemma:salient }
The following conditions are equivalent:

(i) The cone $\c(\Phi_{\rm flip}^B)$ is salient

(ii) $\overline Q_{\rm neg}^B\cap V=\{0\}$

(iii) For any $\lambda\in F$, $V(\Phi,\lambda)\cap \overline Q_{\rm neg}^B $ is bounded.

\end{lemma}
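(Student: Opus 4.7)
The plan is to set up a linear dictionary between the three conditions and then verify the equivalences by short combinatorial/convex-geometric arguments.

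First I note the key algebraic observation: since $\sigma_i \in \{\pm 1\}$, the map $z \mapsto u$ defined by $u_i = \sigma_i z_i$ is an involution of $\R^N$ that carries $\overline{Q_{\rm neg}^B}$ bijectively onto the standard positive quadrant $Q$, while satisfying $\sum_i z_i \phi_i = \sum_i u_i (\sigma_i \phi_i)$. So under this change of variable, elements of $V\cap \overline{Q_{\rm neg}^B}$ correspond bijectively to nonnegative tuples $(u_i)$ such that $\sum_i u_i (\sigma_i\phi_i)=0$, i.e.\ to nonnegative linear relations among the vectors of $\Phi_{\rm flip}^B$.

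For (i) $\Leftrightarrow$ (ii): using the dictionary above, a nonzero $z\in V\cap \overline{Q_{\rm neg}^B}$ produces a nontrivial nonnegative relation $\sum_i u_i(\sigma_i\phi_i)=0$; choosing an index $j$ with $u_j>0$ expresses $\sigma_j\phi_j$ as a nonpositive combination of the $\sigma_i\phi_i$, so $\sigma_j\phi_j$ and $-\sigma_j\phi_j$ both lie in $\c(\Phi_{\rm flip}^B)$, which is therefore not salient (using that $\phi_j\neq 0$). Conversely, if $\c(\Phi_{\rm flip}^B)$ contains a line, writing a nonzero $v$ and $-v$ as nonnegative combinations of the $\sigma_i\phi_i$ and adding the representations gives a nontrivial nonnegative relation $\sum_i u_i(\sigma_i\phi_i)=0$, which transports back to a nonzero element of $V\cap \overline{Q_{\rm neg}^B}$.

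For (ii) $\Leftrightarrow$ (iii): the implication (iii) $\Rightarrow$ (ii) is immediate by specializing $\lambda=0$, since $V\cap \overline{Q_{\rm neg}^B}$ is a cone and hence is bounded only when it is reduced to $\{0\}$. For the converse I would invoke the standard fact that the recession cone of the closed convex set $V(\Phi,\lambda)\cap \overline{Q_{\rm neg}^B}$ is exactly $V\cap \overline{Q_{\rm neg}^B}$, and that a nonempty closed convex set is bounded iff its recession cone is trivial. Equivalently one argues directly: if $V(\Phi,\lambda)\cap \overline{Q_{\rm neg}^B}$ contained an unbounded sequence $(x_n)$, then any cluster point of $x_n/\|x_n\|$ would be a unit vector in $\overline{Q_{\rm neg}^B}$ annihilated by $M$, contradicting (ii).

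The argument is essentially bookkeeping; the only mildly delicate point is to handle the sign conventions so that the ``flip'' bijection in the first paragraph is spelled out correctly, which is why I would state it as a preliminary step before treating the two equivalences.
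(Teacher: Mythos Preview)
Your proof is correct. The paper itself does not give a proof of this lemma at all: it is introduced with the phrase ``We recall the following lemma'' and stated without argument, treating it as a standard fact about polyhedral cones. So there is no paper proof to compare against; your write-up simply supplies the omitted details, and does so along exactly the lines one would expect (the sign-flip bijection identifying $\overline{Q_{\rm neg}^B}\cap V$ with nonnegative relations among the generators of $\c(\Phi_{\rm flip}^B)$, and the recession-cone/compactness argument for the boundedness equivalence).
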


\begin{lemma}\label{easy}
Let $\tau\subset \c(\Phi)$ be a tope and $\overline \tau$ its closure.
Let $B$ be a subset of $\{1,2,\ldots, N\}$.
Assume that  the semi-open cone $\widetilde{\c}(\Phi_{\rm flip}^B)$ and the
tope $\tau$ are disjoint.
Then

\noindent(i)
$\tau$
is disjoint from the closed cone  $\c(\Phi_{\rm flip}^B)$.

\noindent(ii)
The closure $\overline{\tau}$ of $\tau$
is disjoint from the semi-open cone $\widetilde{\c}(\Phi_{\rm flip}^B)$.

\noindent (iii)  ${\tau}-\b(\Phi)$ is disjoint from
$\widetilde{\c}_\Z(\Phi_{\rm flip}^B)$.

\end{lemma}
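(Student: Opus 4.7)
The plan is to prove each of (i), (ii), (iii) by contradiction, with (iii) reducing to (i). The recurring mechanism is the identity $\sigma_i\phi_i = -\phi_i$ for $i\in B$ and $\sigma_i\phi_i = \phi_i$ for $i\in B^c$, which I use to translate between a representation $\sum c_i\phi_i$ of an element of $F$ and a representation $\sum c'_i\sigma_i\phi_i$ in terms of the flipped sequence, keeping careful track of signs and of strict-vs-nonstrict conditions. I will also use throughout that $\tau$ is an open convex cone, since the walls are linear hyperplanes through the origin.

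For (i), I would start from a hypothetical $\lambda \in \tau\cap\c(\Phi_{\rm flip}^B)$ and write $\lambda = \sum_i t_i\,\sigma_i\phi_i$ with $t_i\ge 0$. The perturbed point $\lambda_\epsilon := \lambda - \epsilon\sum_{i\in B}\phi_i$ rewrites as $\sum_{i\in B^c}t_i\,\sigma_i\phi_i + \sum_{i\in B}(t_i+\epsilon)\,\sigma_i\phi_i$, whose $B$-coefficients are strictly positive; hence $\lambda_\epsilon\in\widetilde{\c}(\Phi_{\rm flip}^B)$. Openness of $\tau$ gives $\lambda_\epsilon\in\tau$ for small $\epsilon>0$, contradicting $\tau\cap\widetilde{\c}(\Phi_{\rm flip}^B)=\emptyset$.

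For (ii), suppose $\lambda\in\overline\tau\cap\widetilde{\c}(\Phi_{\rm flip}^B)$, written as $\lambda = \sum t_i\sigma_i\phi_i$ with $t_i>0$ on $B$ and $t_i\ge 0$ on $B^c$. I would pick any $\lambda_0\in\tau$ and, crucially using $\tau\subset\c(\Phi)$, write $\lambda_0 = \sum s_i\phi_i$ with $s_i\ge 0$, equivalently $\lambda_0 = \sum u_i\sigma_i\phi_i$ with $u_i = s_i$ on $B^c$ and $u_i = -s_i$ on $B$. Setting $\lambda_\epsilon := (1-\epsilon)\lambda + \epsilon\lambda_0$, the standard fact about open convex sets (with $\lambda\in\overline\tau$, $\lambda_0\in\tau$) yields $\lambda_\epsilon\in\tau$ for $\epsilon\in(0,1]$. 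Its $\sigma_i\phi_i$-coefficient is $(1-\epsilon)t_i + \epsilon s_i\ge 0$ for $i\in B^c$, and $(1-\epsilon)t_i - \epsilon s_i$ for $i\in B$, the latter remaining strictly positive once $\epsilon < \min_{i\in B}t_i/(t_i+s_i)$. Hence $\lambda_\epsilon\in\tau\cap\widetilde{\c}(\Phi_{\rm flip}^B)$ for small $\epsilon>0$, again contradicting the hypothesis.

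For (iii), a supposed $\nu\in(\tau-\b(\Phi))\cap\widetilde{\c}_\Z(\Phi_{\rm flip}^B)$ decomposes as $\nu = \lambda - y$ with $\lambda\in\tau$, $y = \sum t_i\phi_i$, $0\le t_i\le 1$, and simultaneously $\nu = \sum x_i\phi_i$ with $x_i\in\Z$, $x_i\le -1$ for $i\in B$ and $x_i\ge 0$ for $i\in B^c$. Then $\lambda = \sum(x_i+t_i)\phi_i$, where the integrality bound $x_i\le -1$ together with $t_i\le 1$ forces $x_i+t_i\le 0$ on $B$, while $x_i+t_i\ge 0$ on $B^c$ is automatic. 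Re-expressing in terms of $\sigma_i\phi_i$ places $\lambda$ in $\c(\Phi_{\rm flip}^B)$, contradicting (i). I expect (ii) to be the main obstacle: unlike in (i), the direction into $\tau$ from a boundary point $\lambda\in\overline\tau$ is not ours to choose, so the trick is to average $\lambda$ with an arbitrary $\lambda_0\in\tau$ and exploit $\tau\subset\c(\Phi)$ to control the signs of the resulting coefficients on $B^c$.
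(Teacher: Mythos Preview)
Your proof is correct and follows essentially the same route as the paper's. The only cosmetic differences are: in (ii) you perturb via a convex combination $(1-\epsilon)\lambda+\epsilon\lambda_0$ using that $\tau$ is open and convex, whereas the paper adds a small element $z\in\tau$ using that $\tau$ is a cone---these are equivalent here; and in (iii) you correctly invoke (i) for the contradiction (the paper's text cites (ii), which is a harmless slip since (i) is the statement actually used).
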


\begin{proof}
(i) Assume that   the semi-open cone $\widetilde{\c}(\Phi_{\rm flip}^B)$ and the
tope $\tau$ are disjoint. As $\tau$ is open, it is disjoint from the closure
${\c}(\Phi_{\rm flip}^B)$ of $\widetilde{\c}(\Phi_{\rm flip}^B)$.

(ii)
Choose  $z$ small in $\tau$. As $\tau\subset \c(\Phi)$,
we can write $z=\sum_{a\in A} \epsilon_a \phi_a$ with $A$ a subset of $\{1,2,\ldots, N\}$ and $\epsilon_a>0$.
As $\tau$ is a cone, we may assume the $\epsilon_a$ very small.
Let $\lambda\in \overline{\tau}$.
Then $\lambda+z\in \tau$.
Now, if $\lambda$ belongs also to $  \widetilde{\c}(\Phi_{\rm flip}^B)$,
we may write $\lambda= \sum_{i=1}^N x_i\phi_i$
with $x_i<0$ if $i\in B$ and $x_i\geq 0$ if $i\in B^c$
and we see that $\lambda+z$ is still in
$\widetilde{\c}(\Phi_{\rm flip}^B)$  if $\epsilon_a$ are sufficiently small. This
contradicts the fact that $\widetilde{\c}(\Phi_{\rm flip}^B)\cap \tau$ is
empty.  So (ii) is proven.

Let us prove (iii). Assume that there exist $(n_i)\in \Z^N$,
with $n_i<0$ for $i\in B$ and $n_i\geq 0$ for $i\notin B$,
such that $\sum_i n_i \phi_i\in {\tau}-\b(\Phi)$.  Thus there exist $(t_i)$
with $0\leq t_i\leq 1$, for $i=1,\dots,N$ , and $\lambda \in {\tau}$,
such that $\sum_i (n_i+t_i) \phi_i  =\lambda$.
As $n_i$ are integers, we have $n_i\leq -1$ hence $n_i+t_i\leq 0$ for $i\in B$.
We have also $n_i+t_i\geq 0$ for $i\notin B$.
It follows that $\lambda\in {\tau}\cap {\c}(\Phi_{\rm flip}^B)$.
This contradicts (ii).
\end{proof}
The following proposition states continuity properties on closures and beyond.
\begin{proposition}\label{th:quadrants} Let $\tau\subset \c(\Phi)$  be a tope and
let $Z=\sum_{B} z_B w_B \in W $  be such that
\begin{equation}\label{Z}
\sum_B z_B
\suppresschi{[Q_{\rm neg }^B]}\;\suppresschi{[V(\Phi,\lambda)]}=\suppresschi{[\p(\Phi,\lambda)]} \mbox{
for every } \lambda\in \tau.
\end{equation}
Then

\noindent (i) $z_\emptyset=1$.

\noindent (ii) The equation
$$\sum_B z_B
\suppresschi{[Q_{\rm neg }^B]}\;\suppresschi{[V(\Phi,\lambda)]}=\suppresschi{[\p(\Phi,\lambda)]}$$ still holds
for every  $\lambda\in \overline{\tau}$.

\noindent (ii) For $\lambda\in \tau-\b(\Phi)$, we have
$$
\sum_B z_B
\suppresschi{[Q_{\rm neg}^B]}
\suppresschi{[V(\Phi,\lambda)\cap \Z^N]}=\suppresschi{[\p(\Phi,\lambda)\cap \Z^N]}.
$$
\end{proposition}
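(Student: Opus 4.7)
The plan is to exploit the fact that the semi-closed quadrants $\{Q_{\rm neg}^B\}_{B\subseteq\{1,\ldots,N\}}$ form a partition of $\R^N$: every $x\in\R^N$ belongs to a unique $Q_{\rm neg}^{B(x)}$, where $B(x):=\{i:x_i<0\}$. Consequently, the hypothesis (\ref{Z}) is equivalent to the pointwise identity
$$
z_{B(x)}\;=\;[\p(\Phi,\lambda)](x),\qquad \lambda\in\tau,\;\; x\in V(\Phi,\lambda).
$$
Since $x\in\p(\Phi,\lambda)$ iff $x\in V(\Phi,\lambda)$ and $B(x)=\emptyset$, this identity simultaneously fixes $z_\emptyset$ and forces the vanishing of many other coefficients.

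For part (i), I pick any $\lambda\in\tau$---possible because $\tau\subset\c(\Phi)$ ensures $\p(\Phi,\lambda)\neq\emptyset$---and any $x\in\p(\Phi,\lambda)$; since $B(x)=\emptyset$, the pointwise identity at $x$ gives $z_\emptyset=1$. The key intermediate statement underlying (ii) and (iii) is the following vanishing claim: \emph{if $B\neq\emptyset$ and $\widetilde{\c}(\Phi_{\rm flip}^B)\cap\tau\neq\emptyset$, then $z_B=0$.} Indeed, choosing $\lambda_0\in\tau\cap\widetilde{\c}(\Phi_{\rm flip}^B)$ I write $\lambda_0=\sum_i x_i\phi_i$ with $x\in Q_{\rm neg}^B$, so that $x\in V(\Phi,\lambda_0)\cap Q_{\rm neg}^B$; and $B(x)=B\neq\emptyset$ places $x$ outside $\p(\Phi,\lambda_0)$, forcing $z_B=0$. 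Contrapositively, every $B\neq\emptyset$ with $z_B\neq 0$ satisfies $\tau\cap\widetilde{\c}(\Phi_{\rm flip}^B)=\emptyset$.

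This contrapositive is exactly the hypothesis of Lemma \ref{easy}. For part (ii), Lemma \ref{easy}(ii) upgrades $\tau\cap\widetilde{\c}(\Phi_{\rm flip}^B)=\emptyset$ to $\overline{\tau}\cap\widetilde{\c}(\Phi_{\rm flip}^B)=\emptyset$, which is precisely the statement that $V(\Phi,\lambda)\cap Q_{\rm neg}^B=\emptyset$ for every $\lambda\in\overline{\tau}$. Hence for $\lambda\in\overline{\tau}$ the only surviving term in $\sum_B z_B[Q_{\rm neg}^B][V(\Phi,\lambda)]$ is $B=\emptyset$, and the sum collapses to $z_\emptyset[Q\cap V(\Phi,\lambda)]=[\p(\Phi,\lambda)]$. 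Part (iii) is the same argument, now invoking Lemma \ref{easy}(iii) to obtain $(\tau-\b(\Phi))\cap\widetilde{\c}_\Z(\Phi_{\rm flip}^B)=\emptyset$ for the surviving $B$'s; this says that no integer point of $V(\Phi,\lambda)$ lies in $Q_{\rm neg}^B$ when $\lambda\in\tau-\b(\Phi)$, $B\neq\emptyset$ and $z_B\neq 0$, so only $B=\emptyset$ contributes to the integer sum.

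There is no substantive obstacle: the whole argument is bookkeeping, once the disjoint-quadrant decomposition has translated (\ref{Z}) into a pointwise identity that immediately pins down which $z_B$ can be non-zero, and Lemma \ref{easy} handles uniformly the transfer of this vanishing information from $\tau$ to $\overline{\tau}$ and to the integer points of $\tau-\b(\Phi)$.
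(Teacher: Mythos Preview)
Your proof is correct and follows essentially the same approach as the paper: use the disjointness of the semi-closed quadrants to reduce the hypothesis to a pointwise identity, deduce $z_\emptyset=1$ and the vanishing of $z_B$ whenever $B\neq\emptyset$ and $\widetilde{\c}(\Phi_{\rm flip}^B)$ meets $\tau$, then invoke Lemma~\ref{easy} to pass to $\overline{\tau}$ and to the integer points of $\tau-\b(\Phi)$. Your exposition is slightly more explicit in formulating the pointwise identity $z_{B(x)}=[\p(\Phi,\lambda)](x)$, but the logic is identical.
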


\begin{proof}
 Let $\lambda\in\tau$ and
$x\in \p(\Phi,\lambda)$. Then $x\in Q=Q_{\rm neg}^{\emptyset}$. As the quadrants
$Q_{\rm neg}^B$ are pairwise disjoint, $x\notin Q_{\rm neg}^B$ for $B\neq \emptyset$
hence (\ref{Z}) implies (i).

Next, let  $B\neq \emptyset$. Let $\lambda\in\tau$. Assume there is
an  $x\in Q_{\rm neg}^B\cap V(\Phi,\lambda)$.
   Then $x\notin
\p(\Phi,\lambda)$, thus  (\ref{Z}) implies that $z_B=0$. Hence, if
$z_B\neq 0$,  the semi-open cone $\widetilde{\c}(\Phi_{\rm neg}^B)$ and the
tope $\tau$ are disjoint.
We can then apply Lemma \ref{easy}. As
$\overline{\tau}$
is disjoint from $\widetilde{\c}(\Phi_{\rm neg}^B)$, we see that $V(\Phi,\lambda)$ does not intersect any of the $Q_{\rm neg}^B$ with $z_B\neq 0$ and $Q_{\rm neg}^B$ different of $Q$.  This implies (ii).
  In the same way, we obtain (iii).
\end{proof}
\begin{example}[See Fig.\ref{fig:intervalle_3_sommets}] \label{ex:intervalle_3_sommets_1} Let $N=3$, $\dim F=2$,
$\Phi=(\phi_1, \phi_2, \phi_3= \phi_1+\phi_2)$. If $\tau_1$ is the
open cone generated by $(\phi_1,\phi_3)$, we have
$X(\Phi,\tau_1)=(p_1+q_1)(p_2+q_2)p_3+ (p_1+q_1)p_2
-(p_1+q_1)(p_2+q_2)(p_3+q_3)=  p_1p_2p_3 -  p_1q_2q_3 + q_1p_2p_3
-q_1q_2q_3$. We can check that $X(\phi,\tau_1)$ satisfies the
properties (ii) and (iii) of Proposition \ref{th:quadrants} on Fig. \ref{fig:intervalle_3_sommets}.
\end{example}
\begin{figure}[!h]
\begin{center}
  \includegraphics[width=2in]{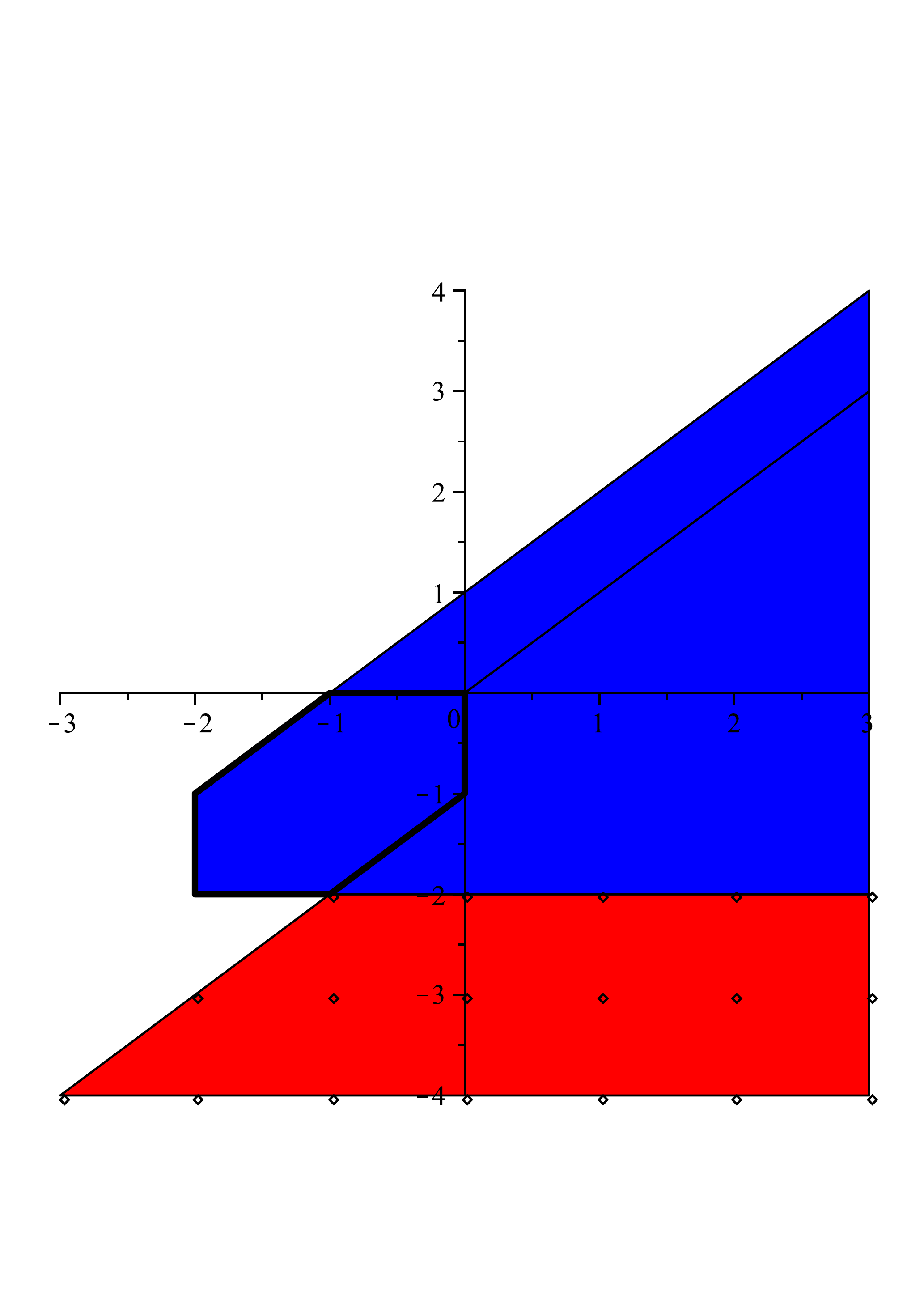}
  \caption{(Example \ref{ex:intervalle_3_sommets_1}).
  In blue, $-\b(\Phi)$ is the closed hexagon, $\tau$ and  ${\tau}-\b(\Phi)$  are open sets. For $B=(2,3)$,
  $\widetilde{\c}_Z(\Phi_{\rm neg}^B)$ is the set of lattice points in the red
  zone,   $m\phi_1-n\phi_3$ with $m\geq 1$ and $n\geq 2$.}
  \label{fig:intervalle_3_sommets}
\end{center}
\end{figure}

\begin{corollary}\label{th:continuity-on-closed-tope} (Continuity on the closure of a tope $\tau$.)
Let $\Phi=(\phi_j)_{1\leq j\leq N} $ be a sequence of non zero elements
of a vector space $F$, generating F, and  spanning a salient cone and  let $\tau\subset \c(\Phi)$ be a tope
relative to $\Phi$ .

\noindent(i) If $\lambda$ belongs to the closure $\overline{\tau}$
of the tope $\tau$, then we  have the equality of characteristic functions of sets
$$  \varchenko(\Phi,\tau)\suppresschi{[V(\Phi,\lambda)]}=\suppresschi{[\p(\Phi,\lambda)]}.$$

\noindent(ii) If $\lambda\in \tau-\b(\Phi)$, we still have the equality of characteristic functions of  sets of lattice points
$$
\varchenko(\Phi,\tau)\suppresschi{[V(\Phi,\lambda)]}\,\suppresschi{[\Z^N]}
=\suppresschi{[\p(\Phi,\lambda)\cap \Z^N]}.
$$
\end{corollary}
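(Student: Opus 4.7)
The plan is to observe that this Corollary is essentially a direct application of Proposition~\ref{th:quadrants} to the Combinatorial Brianchon-Gram function $X(\Phi,\tau)$, once we have identified the geometric realization $\varchenko(\Phi,\tau)$ as the image of $X(\Phi,\tau)$ under the map $\geom$.

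First, I would expand $X(\Phi,\tau)$ in the canonical basis of $W$ as in (\ref{eq:coeffs_combinatorial_BG}):
$$
X(\Phi,\tau) \;=\; \sum_{B \subseteq \{1,\dots,N\}} z(\Phi,\tau,B)\, w_B.
$$
Applying $\geom$ termwise, and using the identification $\geom(w_B) = \prod_{j\in B^c}[x_j\geq 0]\prod_{i\in B}[x_i<0] = [Q_{\rm neg}^B]$, I obtain
$$
\varchenko(\Phi,\tau) \;=\; \geom(X(\Phi,\tau)) \;=\; \sum_B z(\Phi,\tau,B)\, [Q_{\rm neg}^B].
$$
This is consistent with Definition~\ref{geometric_brianchon_gram_function}, since the substitution $p_i = [x_i\geq 0]$, $q_i = [x_i<0]$ makes $p_i + q_i = 1$, so applying $\geom$ to formula (\ref{Brianchon-Gram-combinatoire}) reduces it to formula (\ref{eq:geometric_brianchon_gram_}).

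Next, for $\lambda \in \tau$, the Brianchon-Gram theorem (Theorem~\ref{th:brianchon-gram}), applied to the faces $\f_I(\Phi,\lambda)$ of the partition polytope $\p(\Phi,\lambda)$ and their affine tangent cones $\t_I(\Phi,\lambda) = \a(I) \cap V(\Phi,\lambda)$, yields the key identity (\ref{eq:Xgeom_and_BGtheorem}):
$$
\varchenko(\Phi,\tau)\,[V(\Phi,\lambda)] \;=\; [\p(\Phi,\lambda)] \qquad \text{for every } \lambda \in \tau.
$$
Rewriting the left-hand side using the expansion above, this says
$$
\sum_B z(\Phi,\tau,B)\,[Q_{\rm neg}^B]\,[V(\Phi,\lambda)] \;=\; [\p(\Phi,\lambda)] \qquad \text{for every } \lambda \in \tau,
$$
which is exactly hypothesis (\ref{Z}) of Proposition~\ref{th:quadrants} with $z_B = z(\Phi,\tau,B)$.

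It then remains only to invoke Proposition~\ref{th:quadrants}: its second conclusion gives part (i) of the Corollary (the continuity of the equality of characteristic functions on the closure $\overline{\tau}$), and its third conclusion gives part (ii) (the equality of characteristic functions of lattice points on the fattened tope $\tau - \b(\Phi)$). No real obstacle is expected, as all the substantive work — the case analysis based on whether $Q_{\rm neg}^B$ meets $V(\Phi,\lambda)$, and the zonotope argument of Lemma~\ref{easy}(iii) for the integral case — has already been carried out in the proof of Proposition~\ref{th:quadrants}.
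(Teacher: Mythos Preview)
Your proposal is correct and matches the paper's approach exactly: the paper states Corollary~\ref{th:continuity-on-closed-tope} without an explicit proof, as it is meant to be the immediate specialization of Proposition~\ref{th:quadrants} to $Z = X(\Phi,\tau)$, using (\ref{eq:Xgeom_and_BGtheorem}) to verify hypothesis~(\ref{Z}). You have filled in precisely the intended details.
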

\begin{remark}
There are other elements $Z=\sum z_B w_B$ which satisfy (\ref{Z}).
The simplest one is $Z=w_\emptyset=p_1\cdots p_N$. However it does
not enjoy the analytic  properties of  $\varchenko(\Phi,\tau)$, see
Theorem \ref{th:polynom_integral_and_discrete}.
\end{remark}

\subsection{Polarized sums}\label{polarized_sums}

We now introduce another function on $\R^N$ related to the polarized decomposition of a polytope
as a signed sums of polarized semi-closed cones at the vertices.

In addition to the data of the previous section, we use here a
linear form $\beta $ on $\R^N$, regular with respect to $\Phi$ in
the following sense.

Let $I$ be a basic subset for $\Phi$,
and recall the description of the cone $\a_0(I)$ given in Lemma \ref{lemma:later} with generators $g_j^I=\rho_{\Phi,I}(e_j)$:

$$\a_0(I)=\sum_{j\in I^c}\R_{\geq 0} g_j^I.$$

We assume that $\beta$ is such that
its restriction to $V$ does not vanish on any edge $g_j^I$ of the simplicial cones $\a_0(I)$ when $I$ varies in $\mathcal B(\Phi).$
That is
$\langle\beta, \rho_{\Phi,I} (e_j) \rangle\neq 0$ for all
$I\in \CB(\Phi)$ and $j\in I^c$.

We associate to $\beta$ the ``polarized" cone
 $$
 \a_0(I,\beta)=\sum_{j; \langle\beta, g_j^I \rangle>0} \R_{\geq 0}g_j^I +
 \sum_{j; \langle\beta, g_j^I \rangle<0} \R_{< 0}g_j^I .
 $$
 This is the cone obtained by reversing   the direction of some of the generators of the simplicial cone $\a_0(I)$, in order that $\beta$ takes positive value on all of them. Note however the delicate condition on signs.

Now all the cones  $\a_0(I,\beta)$ are contained in the half space of $V$ determined by $\beta\geq 0$.

For each $K\in \CB(\Phi)$, we denote by
${K^c_\beta }^+$ (resp. ${K^c_\beta}^-$) the set of $j\in K^c$
such
that $\langle\beta, \rho_{\Phi,K} e_j \rangle > 0$ ( resp. ²
$\langle\beta, \rho_{\Phi,K} e_j \rangle < 0$).

\begin{definition}\label{def:coneIbeta}
If $K$ is a subset of $\{1,2,\ldots, N\}$, we denote by

 $$
\a(K,\beta)=\{ x\in \R^N;
x_i\geq 0 \mbox{ for } i\in {K_\beta^c}^+,
x_i< 0 \mbox{ for } i\in {K_\beta^c}^- \}.$$

\end{definition}

Thus the set $\a(K,\beta)$ is the product of three terms: the closed quadrant in the variables in ${K_\beta^c}^+$, 
, the opposite of the open quadrant in the variables in ${K_\beta^c}^-$, and a vector space in the variable in $K$.

If  $K\in \CB(\Phi)$, the cone $\a(K,\beta)\cap V(\Phi,\lambda)$ 
is the translate by the vertex $s_K(\Phi,\lambda)$ of the semi-open cone $\a_0(K,\beta)$ of dimension $d$.
In particular  $\a(K,\beta)\cap V(\Phi,\lambda)$ is contained 
in the half space $s_K(\Phi,\lambda)+{\beta\geq 0}\cap V(\Phi,\lambda)$ of $V(\Phi,\lambda).$

If $\lambda\in \tau$ and $K\in \CB(\Phi,\tau)$,
the  cone   $\a(K,\beta)\cap V(\Phi,\lambda)$   is obtained 
by reversing some of the generators of the tangent cone to the polytope $\p(\Phi,\lambda)$ 
at the vertex $s_K(\Phi, \lambda)$   
so that $\beta$ takes positive values on them. We say that it is the polarized tangent cone.

Recall  the Lawrence-Varchenko polarized decomposition of $\p(\Phi,\lambda)$, (actually, we will give a proof below).
$$
\suppresschi{[\p(\Phi,\lambda)]}=\sum_{K\in \mathcal B(\Phi,\tau)} (-1)^{|{K_\beta^c}^-|}\suppresschi{[\a(K,\beta)\cap V(\Phi,\lambda)]}.
$$
Again this equality is obtained by intersecting a number of fixed cones in $\R^N$
with the varying affine subspace $V(\Phi,\lambda)$. Therefore it is  natural to define the following function.

\begin{definition}\label{Y} The Combinatorial Lawrence-Varchenko
function is the following element of $W$:
\begin{multline}Y(\Phi,\tau,\beta)=
\sum_{K\in \CB(\Phi,\tau)}(-1)^{|{K^c_\beta}^-|}\prod_{i\in
{K^c_\beta}^+}p_i \prod_{i\in  {K^c_\beta}^-}q_i \prod_{i\in K
}(p_i+q_i).
\end{multline}
\end{definition}

If the tope $\tau$ is not contained in $\c(\Phi)$, then $Y(\Phi,\tau,\beta)=0$.
Otherwise, if $\tau\subset \c(\Phi)$, the sum  defining $Y(\Phi,\tau,\beta)$ is
 indexed by all the vertices of  the polytope $\p(\Phi,\lambda_0)$ (for any choice of $\lambda_0\in \tau$).

If we replace $p_i$ by the characteristic function of $x_i\geq 0$ and $q_i$ by the characteristic function of $x_i<0$ ($p_i+q_i=1$), we obtain a function $Y_{{\rm geom}}(\Phi,\tau,\beta)$ on $\R^N$.

 By construction,
 if $\lambda\in\tau$, the product
$\suppresschi{[V(\Phi,\lambda)]}Y_{{\rm geom}}(\Phi,\tau,\beta)$ is the signed
sum of polarized semi-closed cones at the vertices of the (simple)
partition polytope $\p(\Phi,\lambda)$.
  Lawrence-Varchenko's  theorem  can be restated as
  $\suppresschi{[V(\Phi,\lambda)]}Y_{{\rm geom}}(\Phi,\tau,\beta)=\suppresschi{[\p(\Phi,\lambda)]}$
  while Brianchon-Gram's theorem is
$\suppresschi{[V(\Phi,\lambda)]}X_{{\rm geom}}(\Phi,\tau)=\suppresschi{[\p(\Phi,\lambda)]}$
for any $\lambda\in \tau$.

The Lawrence-Varchenko decomposition of a simple polytope can be
derived from the Brianchon-Gram one, by grouping some faces with a
common vertex, \cite{lawrence91}.
It is remarkable that their
combinatorial precursors  actually  {coincide} as elements of the
space $W$, as we show in the next theorem.

\begin{theorem}\label{th:polarized-sum}
 Let $\Phi=(\phi_j)_{1\leq j\leq N} $ be a sequence of non zero elements
of a vector space $F$, generating F, and  spanning a salient cone,  and let $\tau\subset \c(\Phi)$ be a $\Phi$-tope. Let
$X(\Phi,\tau)$ be the Combinatorial Brianchon-Gram function. For any
linear form $\beta$  which is regular with respect to $\Phi$,
let  $Y(\Phi,\tau,\beta)$ be the Combinatorial Lawrence-Varchenko
function. Then
$$ Y(\Phi,\tau,\beta)= X(\Phi,\tau).
$$
\end{theorem}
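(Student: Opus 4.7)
The plan is to prove $X(\Phi,\tau)=Y(\Phi,\tau,\beta)$ in $W$ by showing they have the same image under $\geom$. First I would note that $\geom$ is injective on $W$: since the semi-open quadrants $Q_{\rm neg}^B$ partition $\R^N$, their characteristic functions $\geom(w_B)=[Q_{\rm neg}^B]$ are linearly independent. Hence it suffices to establish, as functions on $\R^N$,
$$\sum_{I\in\CG(\Phi,\tau)}(-1)^{|I|-r}[\a(I)] \;=\; \sum_{K\in\CB(\Phi,\tau)}(-1)^{|{K^c_\beta}^-|}[\a(K,\beta)],$$
which is essentially the statement that Brianchon--Gram and Lawrence--Varchenko produce the same signed sum of \emph{fixed} cones in $\R^N$, prior to intersecting with the varying slice $V(\Phi,\lambda)$.

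The key combinatorial step will be a bijection obtained by grouping each face-index under its $\beta$-maximal vertex. For $I\in\CG(\Phi,\tau)$ and $\lambda_0\in\tau$, the face $\f_I(\Phi,\lambda_0)$ of $\p(\Phi,\lambda_0)$ has vertices $s_K(\Phi,\lambda_0)$ for $K\in\CB(\Phi,\tau)$ with $K\subseteq I$, and the edges at $s_K$ point along the generators $g_j^K$ for $j\in I\setminus K$. Since $\beta$ is nonvanishing on every $g_j^K$, each face has a unique $\beta$-maximal vertex, characterized combinatorially as the unique $K(I)\in\CB(\Phi,\tau)$ with $K(I)\subseteq I$ and $I\setminus K(I)\subseteq {K(I)^c_\beta}^-$. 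Conversely, for any $K\in\CB(\Phi,\tau)$ and any $L\subseteq {K^c_\beta}^-$, the set $I:=K\cup L$ lies in $\CG(\Phi,\tau)$ (because $K$ already generates $F$ and $\tau\subseteq\c(\Phi_K)\subseteq\c(\Phi_I)$) and satisfies $K(I)=K$. This yields the bijection
$$\CG(\Phi,\tau)\;\longleftrightarrow\;\{(K,L):K\in\CB(\Phi,\tau),\;L\subseteq {K^c_\beta}^-\},\qquad I\mapsto(K(I),\,I\setminus K(I)).$$

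Applying this bijection and $|I|-r=|L|$, the $X$-sum becomes $\sum_K\sum_{L\subseteq {K^c_\beta}^-}(-1)^{|L|}[\a(K\cup L)]$, and it only remains to verify, for each fixed $K$, the local identity
$$\sum_{L\subseteq {K^c_\beta}^-}(-1)^{|L|}[\a(K\cup L)]\;=\;(-1)^{|{K^c_\beta}^-|}[\a(K,\beta)].$$
This is a short pointwise inclusion--exclusion: since $\a(K\cup L)=\{x:x_i\geq 0\ \forall\,i\in K^c\setminus L\}$, the alternating sum over $L\subseteq {K^c_\beta}^-$ collapses to the terms $x$ satisfying $x_i\geq 0$ on ${K^c_\beta}^+$ and $x_i<0$ on ${K^c_\beta}^-$, which is exactly the quadrant defining $\a(K,\beta)$. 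The main obstacle I expect is setting up the bijection rigorously, in particular justifying that $I\mapsto K(I)$ is everywhere well-defined and surjective; this reduces to showing that on the simple polytope $\p(\Phi,\lambda_0)$ the form $\beta$ takes distinct values on every pair of adjacent vertices, which follows from $\beta$-regularity on the local edge generators $g_j^K$.
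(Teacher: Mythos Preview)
Your proof is correct and follows essentially the same route as the paper: group each $I\in\CG(\Phi,\tau)$ under the unique $K\in\CB(\Phi,\tau)$ at which $\beta$ is maximal on the face $\f_I$ (equivalently, $K\subseteq I$ with $I\setminus K\subseteq{K^c_\beta}^-$), then collapse the inner sum by inclusion--exclusion. The only cosmetic difference is that the paper carries out the final identity directly in the polynomial ring $\Z[p_i,q_i]$ by comparing coefficients of the monomials $w_B$, whereas you first invoke injectivity of $\geom$ on $W$ to pass to characteristic functions; since the $Q_{\rm neg}^B$ partition $\R^N$ this is harmless, and the underlying combinatorics is identical.
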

\begin{proof}
In the sum $X(\Phi,\tau)$, for a given $K\in \CB(\Phi,\tau)$, we
group together the  $I\in \CG(\Phi,\tau)$  such that $ K\subseteq I$
and   $ \langle\beta,\rho_{\Phi,K} e_i \rangle <0, \mbox{ for every
} i\in I\setminus K$. We denote the set of these $I$ by
$\CG(\Phi,\tau)_\beta^K$.

\begin{lemma} Let $I\in \CG(\Phi,\tau)$ and $K\in \CB(\Phi,\tau)$ such that $ K\subseteq I$ . Then
$I\in\CG(\Phi,\tau)_\beta^K$  if and only if
 for any $\lambda\in \tau$,  on the face $\f_I(\Phi,\lambda)$  of
$\p(\Phi,\lambda)$ which is indexed by $I$, the linear form $\beta$
reaches its maximum at the vertex $s_K$ indexed by $K$.
\end{lemma}

\begin{proof}
Let $\lambda\in\tau$.    Let $x=\sum_{i\in I}x_i e_i \in \f_I(\Phi,\lambda)$, with
$x\neq s_K$. Then $x-s_K$ is the projection $\rho_{\Phi,K}(x)=
\sum_{i\in I\setminus K}x_i \rho_{\Phi,K}(e_i) $. Hence
$$
\left\langle \beta, x-s_K \right\rangle =\sum_{i\in I\setminus K}x_i
\langle \beta,\rho_{\Phi,K}(e_i)\rangle .
$$
Assume that $ \langle\beta,\rho_{\Phi,K} e_i \rangle <0, \mbox{ for
every } i\in I\setminus K$. As $x_i\geq 0$ and $x_i>0$ for at least
one index $i\in I\setminus K$, we have $\left\langle \beta, x-s_K
\right\rangle < 0$.

Conversely, let $i\in I\setminus K$. Take an $x=\sum_{k\in K}x_k e_k + x_i e_i \in \f_I(\Phi,\lambda)$
with $x_i>0$. Then by assumption we have $\left\langle \beta, x-s_K \right\rangle<0$, hence
 $x_i  \langle\beta,\rho_{\Phi,K} e_i \rangle <0$.
\end{proof}
It follows that, when $K$ runs over  $\CB(\Phi,\tau)$ (the set of
vertices of $\p(\Phi,\lambda)$), the subsets
$\CG(\Phi,\tau)_\beta^K$ form a partition of $\CG(\Phi,\tau)$ (the
set of faces). Therefore, in order to prove Theorem
\ref{th:polarized-sum}, there remains to show that for every $K\in
\CB(\Phi,\tau)$, we have
\begin{multline}
\sum_{I\in \CG(\Phi,\tau)_\beta^K}(-1)^{|I|-\dim
F}\prod_{i\in I^c}p_i \prod_{i\in I}(p_i+q_i) =\\
 (-1)^{|{K^c_\beta}^-|}\prod_{i\in
{K^c_\beta}^+}p_i \prod_{i\in  {K^c_\beta}^-}q_i \prod_{i\in K
}(p_i+q_i).
\end{multline}
We factor out $\prod_{i\in K
}(p_i+q_i)$. We need to prove
\begin{multline}\label{eq:check_combinatorial_equality}
\sum_{I\in \CG(\Phi,\tau)_\beta^K}(-1)^{|I|-\dim F}\prod_{i\in
I^c}p_i \prod_{i\in I\setminus K}(p_i+q_i) =\\
 (-1)^{|{K^c_\beta}^-|}\prod_{i\in
{K^c_\beta}^+}p_i \prod_{i\in  {K^c_\beta}^-}q_i.
\end{multline}
We make several observations. First, $\CG(\Phi,\tau)_\beta^K $ is
precisely  the set of $I\subseteq \{1,\dots,N\}$ such that
$K\subseteq I$ and $I\setminus K \subseteq {K^c_\beta}^-$. Moreover,
for each $I\in \CG(\Phi,\tau)_\beta^K$, the set of indices $I^c
\bigsqcup (I\setminus K)$ is exactly the complement $K^c$ and  $|I|-
\dim F= |I\setminus K|$,  as  $\dim F= |K|$. Let $B\subseteq K^c$. A
given monomial $ \prod_{i\in K^c\setminus B}p_i \prod_{i\in B}q_i$
appears on the left hand side of
(\ref{eq:check_combinatorial_equality}) with coefficient
\begin{equation*}\label{eq:moebius}
\sum_{\{I\in \CG(\Phi,\tau)_\beta^K, \, B\subseteq I\setminus K
\}}(-1)^{|I\setminus K|}.
\end{equation*}
By the usual inclusion-exclusion relations applied to the subsets
$I\setminus K$ of $K^c$, this sum is equal to
$(-1)^{|{K^c_\beta}^-|}$ if $B= {K^c_\beta}^-$ and to $0$ otherwise.
\end{proof}
\begin{example}
In Example \ref{standard_simplex}  of the standard knapsack  with
$N=3$, we take $\langle\beta,x\rangle= x_1 + \frac{x_2}{2} +\frac{x_3}{3}$. We obtain $Y(\Phi,\tau,\beta)=
(p_1+q_1)q_2q_3+p_1(p_2+q_2)p_3-p_1q_2(p_3+q_3)$. It is indeed equal
to $X(\Phi,\tau)= p_1p_2p_3+q_1q_2q_3$.
\end{example}

\begin{example}\label{ex:intervalle_3_sommets_2}(continues Example
\ref{ex:intervalle_3_sommets_1}). The subspace $V$ is
generated by the vector $e_1+e_2-e_3$. Hence, the projections
$\rho_{(\Phi,K)} (e_i)$ are $\rho_{\Phi,(1,2)} (e_3)= e_3-e_1-e_2$,
$\rho_{\Phi,(1,3)} (e_2)= \rho_{\Phi,(2,3)} (e_1)= e_1+e_2-e_3$. We
can take $\langle\beta,x\rangle= x_1+ x_2+ x_3$. Let  $\tau_1$ be
the cone generated by $(\phi_1,\phi_3)$. We obtain
\begin{eqnarray}\label{eq:intervalle_3_sommets_2}
Y(\Phi,\tau_1,\beta)& =& -(p_1+q_1)(p_2+q_2)q_3 +
(p_1+q_1)p_2(p_3+q_3)\\
& =& p_1p_2p_3 + p_1q_2p_3 - q_1p_2q_3 -q_1q_2q_3.
\end{eqnarray}
Comparing with Example \ref{ex:intervalle_3_sommets_1}, we check
that $Y(\Phi,\tau_1,\beta)= X(\Phi,\tau_1) $.
\end{example}
\begin{corollary}\label{co:bounded}
For any $\lambda\in F$,
$\varchenko(\Phi,\tau)\suppresschi{[V(\Phi,\lambda)]}$ is
as a signed sum of bounded polytopes.
\end{corollary}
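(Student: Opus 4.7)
The plan is to reduce the statement to showing that in the expansion $X(\Phi,\tau) = \sum_B z(\Phi,\tau,B)\, w_B$ of (\ref{eq:coeffs_combinatorial_BG}), every subset $B$ with $z(\Phi,\tau,B)\neq 0$ satisfies $\overline{Q_{\rm neg}^B} \cap V = \{0\}$. Once this is established, applying $\geom$ to both sides yields
$$
\varchenko(\Phi,\tau)[V(\Phi,\lambda)] = \sum_B z(\Phi,\tau,B)\, [Q_{\rm neg}^B \cap V(\Phi,\lambda)],
$$
and each slice $Q_{\rm neg}^B \cap V(\Phi,\lambda)$ appearing with a non-zero coefficient is a bounded semi-open polytope by Lemma \ref{lemma:salient }, giving the desired signed sum of bounded polytopes.

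The key input is Theorem \ref{th:polarized-sum}, which asserts $X(\Phi,\tau) = Y(\Phi,\tau,\beta)$ for \emph{every} regular $\beta\in(\R^N)^*$. Expanding the factors $\prod_{i\in K}(p_i+q_i)$ in the definition of $Y(\Phi,\tau,\beta)$, one checks that the monomial $w_B$ appears in the $K$-summand (for $K\in \CB(\Phi,\tau)$) exactly when $B\cap K^c = {K^c_\beta}^-$. Hence, whenever $z(\Phi,\tau,B)\neq 0$, for each regular $\beta$ there is a witness $K=K(\beta)\in \CB(\Phi,\tau)$ satisfying $B\cap K^c = {K^c_\beta}^-$. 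Given any $v\in \overline{Q_{\rm neg}^B}\cap V$, since $\rho_{\Phi,K}$ restricts to the identity on $V$, we have $v=\sum_{j\in K^c} v_j g_j^K$, and therefore
$$
\langle \beta, v\rangle = \sum_{j\in K^c} v_j \langle \beta, g_j^K\rangle.
$$
For $j\in {K^c_\beta}^-\subseteq B$, both $v_j\leq 0$ and $\langle\beta, g_j^K\rangle<0$; for $j\in {K^c_\beta}^+\subseteq B^c$, both $v_j\geq 0$ and $\langle\beta, g_j^K\rangle>0$. Every term is non-negative, so $\langle \beta, v\rangle \geq 0$.

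This inequality holds for every regular $\beta$. Since the set of regular $\beta$'s is open and dense in $(\R^N)^*$ and $\beta\mapsto\langle\beta,v\rangle$ is continuous, we conclude $\langle\beta,v\rangle\geq 0$ for \emph{all} $\beta\in(\R^N)^*$, which forces $v=0$. The main subtle point is that the witness $K=K(\beta)$ genuinely depends on $\beta$: a single choice of $\beta$ only delivers a half-space bound $\langle\beta,v\rangle\geq 0$, so varying $\beta$ (or, equivalently, running the argument for both $\beta$ and $-\beta$) is essential to pin $v$ down to $0$.
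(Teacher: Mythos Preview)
Your proof is correct and shares its core strategy with the paper's: both invoke Theorem~\ref{th:polarized-sum} to replace $X(\Phi,\tau)$ by $Y(\Phi,\tau,\beta)$ and then let $\beta$ range over all regular linear forms to accumulate enough half-space constraints to force boundedness. The paper argues directly on the support: for a fixed regular $\beta$, each polarized cone $\a(K,\beta)\cap V(\Phi,\lambda)$ lies in the half-space $\{\langle\beta,x\rangle\geq \langle\beta,s_K(\Phi,\lambda)\rangle\}$, so the support of $\varchenko(\Phi,\tau)[V(\Phi,\lambda)]$ sits in $\{\langle\beta,x\rangle\geq m(\beta)\}$; varying $\beta$ then bounds the support.

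Your route is a refinement of the same idea: you work quadrant-by-quadrant on the expansion $X(\Phi,\tau)=\sum_B z(\Phi,\tau,B)\,w_B$ and show that every $B$ with $z(\Phi,\tau,B)\neq 0$ satisfies $\overline{Q_{\rm neg}^B}\cap V=\{0\}$. This is precisely the statement of Theorem~\ref{th:salient}(ii), which the paper proves later via the wall-crossing machinery of Corollary~\ref{th:wallcrossing_k_steps}. Your argument thus gives an independent, more elementary proof of that per-quadrant boundedness using only Theorem~\ref{th:polarized-sum}. So while the two proofs of Corollary~\ref{co:bounded} are close kin, yours extracts the sharper conclusion along the way.
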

\begin{proof}
Choose any $\beta$ regular, then the function
$\varchenko(\Phi,\tau)\suppresschi{[V(\Phi,\lambda)]}$
is equal to $Y_{{\rm geom}}(\Phi,\tau,\beta)\suppresschi{[V(\Phi,\lambda)]}$. 
Taking $m(\beta)$ to be the minimum of the values 
$\langle \beta,s_K(\Phi,\lambda)\rangle$ over the $K\in \mathcal B(\Phi,\lambda)$, 
we see that the support of the function
$\varchenko(\Phi,\tau)\suppresschi{[V(\Phi,\lambda)]}$  
is contained in the half space $\{\langle \beta,x\rangle\geq m(\beta)\}$ of $V(\Phi,\lambda)$.
As this equality holds for any regular linear form $\beta$, this implies that the support  of
$\varchenko(\Phi,\tau)\suppresschi{[V(\Phi,\lambda)]}$ is bounded.
\end{proof}

\section{Wall-crossing}
\subsection{Combinatorial wall-crossing}
In this section  we prove the main theorem of this article: a formula for  $X(\Phi,\tau_2)- X(\Phi,
\tau_1)$, when  $\tau_1$ and $\tau_2$ are  adjacent topes.

 The
computation  comes out nicely when we use the polarized expression
$Y(\Phi,\tau,\beta)$ as a sum over $\CB(\Phi,\tau)$ (Theorem
\ref{th:polarized-sum}), because it is easy to  analyze how
$\CB(\Phi,\tau)$ changes as we cross the wall $H$ between the two
topes.

We recall that two topes $\tau_1$ and $\tau_2$ are called adjacent if the intersection of their closures spans a wall $H$.
We denote by $\Phi\cap H$ the subsequence of $\Phi$ formed by the elements $\phi_i$ belonging to $H$.

\begin{lemma}\label{flip} Let $\tau_1$ and $\tau_2$  be adjacent   $\Phi $-topes
 such that $\tau_1\subset \c(\Phi)$.
 Let $K\in \CB(\Phi,\tau_1)$ such that $K\notin \CB(\Phi,\tau_2)$.

\noindent(i) For all $k\in K$ but one, say $k_1$, we have $\phi_k\in
H$. The vector  $\phi_{k_1}$ is in the open side  of $ H $ which
contains $\tau_1$.

\noindent(ii) Let $\tau_{12}$ be the unique tope of $\Phi\cap H$
such that $\overline{\tau_1}\cap\overline{\tau_2}\subset
\overline{\tau_{12}}$. Then $\tau_{12}$ is contained in the cone
generated by  the vectors $\phi_k$ for $ k\in K, k\neq k_1 $.
\end{lemma}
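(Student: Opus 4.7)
The plan is to exploit the fact that $K$ being $\Phi$-basic turns $\c(\Phi_K)$ into a simplicial cone of dimension $r$, whose $r$ facets are cones spanned by $r-1$ of the $\phi_k$ and therefore each lies in a $\Phi$-wall; the separating wall $H$ between $\tau_1$ and $\tau_2$ will be forced to coincide with one of these facets, which pins down the index $k_1$.

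For (i), I would first note that $K\in \CB(\Phi,\tau_1)$ gives $\tau_1\subset \c(\Phi_K)$, hence $\overline{\tau_1}\subset \c(\Phi_K)$ since the cone is closed. Since $K\notin \CB(\Phi,\tau_2)$, the open set $\tau_2$ is not contained in $\c(\Phi_K)$; as $\tau_2$ is connected and disjoint from every wall, and the boundary of $\c(\Phi_K)$ is a union of pieces of walls, $\tau_2$ must in fact be disjoint from the interior of $\c(\Phi_K)$. The common boundary $\overline{\tau_1}\cap \overline{\tau_2}$ therefore lies in $\partial \c(\Phi_K)$. By the adjacency assumption this convex subset spans the hyperplane $H$, which can only happen if it is contained in a single facet of the simplicial cone $\c(\Phi_K)$; that facet is then forced to be $H\cap \c(\Phi_K)$ and is spanned by the $r-1$ vectors $\phi_k$ with $k\in K\setminus\{k_1\}$ for a single omitted index $k_1$. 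These $r-1$ vectors then lie in $H$, while $\phi_{k_1}$, being the remaining generator of $\c(\Phi_K)$, must lie on the open side of $H$ on which the whole cone $\c(\Phi_K)$ (and hence $\tau_1$) sits.

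For (ii), intersecting $\c(\Phi_K)$ with $H$ gives $\c(\Phi_K)\cap H = \c(\{\phi_k: k\in K,\ k\neq k_1\})$, which is a full-dimensional simplicial cone inside the hyperplane $H$. The intersection $\overline{\tau_1}\cap \overline{\tau_2}$ lies in $\overline{\tau_1}\cap H\subset \c(\Phi_K)\cap H = \c(\{\phi_k: k\neq k_1\})$ and spans $H$, so its relative interior in $H$ is non-empty and is contained in the interior of $\c(\{\phi_k: k\neq k_1\})$. By definition of $\tau_{12}$ this relative interior lies in $\overline{\tau_{12}}$, so $\tau_{12}$ meets the interior of $\c(\{\phi_k: k\neq k_1\})$. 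The facets of that cone inside $H$ are spanned by $r-2$ elements of $\Phi\cap H$, hence belong to walls of the arrangement $\Phi\cap H$ in $H$; since $\tau_{12}$ is a connected component of the complement of these walls, it cannot cross any of them, and I conclude $\tau_{12}\subset \c(\{\phi_k: k\in K,\ k\neq k_1\})$.

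The main obstacle will be the rigorous step that $\tau_2$ sits entirely outside the interior of $\c(\Phi_K)$: one must combine the failure $K\notin \CB(\Phi,\tau_2)$ with the connectedness of $\tau_2$ and with the fact that the boundary of $\c(\Phi_K)$ is covered by walls, to rule out any partial overlap. Once this separation is established, the identification of $H$ with a specific facet of the simplicial cone, and the remaining statements about $\tau_{12}$, follow by routine convex geometry inside $H$.
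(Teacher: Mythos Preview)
Your argument is correct and is essentially the same as the paper's, just phrased in the language of convex geometry rather than coordinates: the paper simply takes $K=\{1,\dots,r\}$, uses the corresponding coordinates $x_1,\dots,x_r$ on $F$, observes that some $x_i$ (say $x_1$) is positive on $\tau_1$ and negative on $\tau_2$, and concludes $H=\{x_1=0\}$, from which (i) and (ii) are immediate. Your facet-and-connectedness argument is the coordinate-free version of exactly this; the only step you flag as an obstacle (that $\tau_2$ lies entirely outside $\operatorname{int}\c(\Phi_K)$) is in coordinate language just the statement that each $x_i$ has constant sign on a tope, so it needs no further work.
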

\begin{proof}
Up to renumbering, we can assume that  $K=\{1,\dots,r\}$.
Let $x_1,\dots,x_r$ be the  coordinates on $F$ relative to this basic subset.
If $K\notin \CB(\Phi,\tau_2)$, at least one of these coordinates,
 say $x_1$, is $<0$ on $\tau_2$. Then the wall $H$ must be the hyperplane
$ \{x_1=0\}$.  (i) follows immediately.

The proof of (ii) is also easy.

\end{proof}

Recall Definition \ref{def:flipped_Phi} of flipped systems $\Phi_{\rm flip}^A$.

\begin{lemma}\label{bases_topes_adjacents}
Let $\tau_1$ and $\tau_2$  be adjacent    $\Phi $-topes such that
$\tau_1\subset \c(\Phi)$. Let $H$ be their common wall. Let $A $ be
the set of $i\in \{1,\dots,N\}$ such that $ \phi_i$ belongs to  the
open side of $ H $   which contains $\tau_1$, (hence $-\phi_i$
belongs to the side of $\tau_2$). Then $\CB(\Phi_{\rm flip}^A,\tau_2)$ is equal
to the symmetric difference $\CB(\Phi,\tau_1)\vartriangle
\CB(\Phi,\tau_2)$. More precisely
$$
K\in \CB(\Phi,\tau_1), K\notin \CB(\Phi,\tau_2)  \Leftrightarrow
K\in\CB(\Phi_{\rm flip}^A,\tau_2), K\cap A\neq \emptyset,
$$
$$K\in
\CB(\Phi,\tau_2), K\notin \CB(\Phi,\tau_1)\Leftrightarrow
K\in\CB(\Phi_{\rm flip}^A,\tau_2), K\cap A=\emptyset.
$$
Moreover, the cone $\c(\Phi_{\rm flip}^A)$ is salient , and $\tau_2$ is
contained in at least one of the cones $\c(\Phi)$ or $\c(\Phi_{\rm flip}^A)$.
\end{lemma}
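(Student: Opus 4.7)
The plan is to analyze each $K\in\CB(\Phi)$ via the signs of the dual coordinates $x_k$ defined by $\lambda=\sum_{k\in K}x_k(\lambda)\phi_k$. Membership $K\in\CB(\Phi,\tau)$ amounts to $x_k>0$ on $\tau$ for every $k\in K$, and $K\in\CB(\Phi_{{\rm flip}}^A,\tau)$ amounts to $\sigma_k x_k>0$ on $\tau$. Each hyperplane $\{x_k=0\}={\rm span}\{\phi_j:j\in K\setminus\{k\}\}$ is a wall, so the sign of $x_k$ is well-defined on each tope, and it flips between $\tau_1$ and $\tau_2$ if and only if $\{x_k=0\}=H$. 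This splits the analysis into Case A (no $k\in K$ satisfies $\{x_k=0\}=H$) and Case B (exactly one $k_0\in K$ does; two cannot occur, since that would put all of $\Phi_K$ in $H$).

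In Case B, the structure is supplied by Lemma \ref{flip}(i): $\phi_{k_0}$ is the unique element of $\Phi_K$ not in $H$, the others lie in $H$, and $\phi_{k_0}$ sits on the $\tau_1$-side when $K\in\CB(\Phi,\tau_1)$ and on the $\tau_2$-side when $K\in\CB(\Phi,\tau_2)$. Splitting into the three subcases and tracking the signs of $\sigma_k x_k$ on $\tau_2$: if $K\in\CB(\Phi,\tau_1)\setminus\CB(\Phi,\tau_2)$, then $k_0\in A$, $K\cap A=\{k_0\}$, and all signs line up so that $K\in\CB(\Phi_{{\rm flip}}^A,\tau_2)$; if $K\in\CB(\Phi,\tau_2)\setminus\CB(\Phi,\tau_1)$, then $k_0\notin A$, $K\cap A=\emptyset$, and $\Phi_{{\rm flip},K}^A=\Phi_K$ so membership persists; if $K$ is in neither, then some other $\phi_{k_1}\in H$ (hence $k_1\notin A$, so $\sigma_{k_1}=1$) has $x_{k_1}<0$ on $\tau_2$, making $\sigma_{k_1}x_{k_1}<0$ and ruling out $K\in\CB(\Phi_{{\rm flip}}^A,\tau_2)$.

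Case A is the crux and I expect it to be the main obstacle. Since signs of each $x_k$ agree on $\tau_1$ and $\tau_2$, $K\in\CB(\Phi,\tau_1)\Leftrightarrow K\in\CB(\Phi,\tau_2)$ (so $K$ contributes to neither side of the symmetric-difference identity) and $K\in\CB(\Phi_{{\rm flip}}^A,\tau_2)\Leftrightarrow K\in\CB(\Phi_{{\rm flip}}^A,\tau_1)$. I will then prove $\c(\Phi_{{\rm flip},K}^A)\subset\{\alpha\leq 0\}$, where $\alpha$ is a linear form vanishing on $H$ and positive on $\tau_1$: for each generator $\sigma_k\phi_k$, either $\phi_k\in H$ and $\alpha(\sigma_k\phi_k)=0$; or $k\in A$, so $\sigma_k=-1$ and $\alpha(\sigma_k\phi_k)=-\alpha(\phi_k)<0$; or $k\notin A$ with $\phi_k\notin H$, so $\phi_k$ lies on the $\tau_2$-side and $\alpha(\sigma_k\phi_k)=\alpha(\phi_k)<0$. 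Since $\tau_1\subset\{\alpha>0\}$ is disjoint from $\{\alpha\leq 0\}$, we have $\tau_1\not\subset\c(\Phi_{{\rm flip},K}^A)$, hence $\tau_2\not\subset\c(\Phi_{{\rm flip},K}^A)$ by the sign invariance just noted; so Case A contributes nothing to $\CB(\Phi_{{\rm flip}}^A,\tau_2)$, matching the right-hand side.

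The salience of $\c(\Phi_{{\rm flip}}^A)$ then drops out of the same half-space trick: a non-trivial relation $\sum_i u_i\sigma_i\phi_i=0$ with $u_i\geq 0$, paired against $\alpha$, forces $u_i=0$ whenever $\phi_i\notin H$, reducing to a non-negative relation among $\{\phi_i:\phi_i\in H\}$ which is trivial by the salience of $\c(\Phi)$. Finally, $\tau_2\subset\c(\Phi)\cup\c(\Phi_{{\rm flip}}^A)$ is an immediate corollary of the symmetric-difference identity: pick any $K\in\CB(\Phi,\tau_1)$ (nonempty since $\tau_1\subset\c(\Phi)$); if $K\in\CB(\Phi,\tau_2)$ then $\tau_2\subset\c(\Phi_K)\subset\c(\Phi)$, otherwise $K$ belongs to the symmetric difference hence to $\CB(\Phi_{{\rm flip}}^A,\tau_2)$, giving $\tau_2\subset\c(\Phi_{{\rm flip},K}^A)\subset\c(\Phi_{{\rm flip}}^A)$.
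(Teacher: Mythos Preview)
Your proof is correct and complete. The paper's own proof is a single sentence (``It follows easily from Lemma~\ref{flip}~(i) and the definition of $\Phi_{\rm flip}^A$''), so what you have written is exactly the kind of detailed verification the authors leave to the reader; your coordinate-sign framework, the A/B case split according to whether $H$ equals one of the hyperplanes $\{x_k=0\}$, and your treatment of salience via the linear form $\alpha$ are the natural way to carry this out.

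One small cosmetic remark: your sentence ``In Case~B, the structure is supplied by Lemma~\ref{flip}(i)'' slightly overstates the role of that lemma, since Lemma~\ref{flip}(i) is formulated only for $K\in\CB(\Phi,\tau_1)\setminus\CB(\Phi,\tau_2)$, not for all of Case~B. In fact you do not really invoke it --- you derive the key structural fact (that $\phi_{k_0}$ is the unique element of $\Phi_K$ outside $H$, and that the remaining $\phi_k$ lie in $H$) directly from the Case~B hypothesis $\{x_{k_0}=0\}=H$. So the citation is decorative rather than load-bearing, and the argument stands on its own.
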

\begin{proof}
It follows easily from Lemma \ref{flip} (i) and the definition of
$\Phi_{\rm flip}^A$.
\end{proof}

It will be convenient to have a
notation.
\begin{definition}
Let $\tau_1, \tau_2$ be adjacent $\Phi$-topes. We denote by
$A(\Phi,\tau_1,\tau_2)$ the set of $i\in\{1,\dots,N\}$ such that
$\phi_i$ belongs to the open side of the common wall which contains
 $\tau_1$ .
\end{definition}

\begin{theorem}\label{th:wall_crossing}

Let $\Phi=(\phi_j)_{1\leq j\leq N} $ be a sequence of non zero elements
of a vector space $F$, generating F, and  spanning a salient cone.
 Let $\tau_1$ and $\tau_2$ be adjacent $\Phi $-topes
such that $\tau_1\subset \c(\Phi)$.
 Let $H$ be their common wall.
Let $A $ be the set of $i\in \{1,\dots,N\}$ such that $ \phi_i$ is
in the open side  of $ H $ which contains $\tau_1$. Let $\Phi_{\rm flip}^A$ be
the sequence $\sigma^A_i \phi_i$, where $\sigma^A_i=-1$ if $i\in A$
and $\sigma^A_i=1$ if $i\notin A$.  Let $X(\Phi,\tau_1)$,
$X(\Phi,\tau_2)$ and $X(\Phi_{\rm flip}^A,\tau_2)$ be the corresponding
Combinatorial Brianchon-Gram polynomials $\in \Z[p_i,q_i]$. Let
$\flip_A $ be the ring homomorphism  from $\Z[p_i,q_i]$ to itself
defined by exchanging $p_i$ and $q_i$ for $i\in A$.  Then we have
the wall-crossing formula
\begin{equation}\label{eq:wall-crossing}
X(\Phi,\tau_1)= X(\Phi,  \tau_2)- (-1)^{|A|}\flip_A X(\Phi_{\rm flip}^A,
\tau_2).
\end{equation}
\end{theorem}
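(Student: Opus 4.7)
My plan is to replace $X$ by the Combinatorial Lawrence--Varchenko function $Y$ on both sides, using Theorem~\ref{th:polarized-sum}, because in $Y(\Phi,\tau,\beta)$ the summand indexed by a basic subset $K$ depends only on $K$, $\Phi$ and the polarizing form $\beta$, and \emph{not} on the tope $\tau$. Choose once and for all a linear form $\beta$ regular for $\Phi$ (hence automatically regular for $\Phi_{\rm flip}^A$, since the flipped edge generators $g_j^{K,{\rm flip}}$ differ from $g_j^K$ only by signs, see below). Writing $y_K(\Phi,\beta)$ for the $K$-summand appearing in Definition~\ref{Y}, the terms indexed by $K\in \CB(\Phi,\tau_1)\cap \CB(\Phi,\tau_2)$ cancel in $Y(\Phi,\tau_1,\beta)-Y(\Phi,\tau_2,\beta)$, and by Lemma~\ref{bases_topes_adjacents} the remaining indices run exactly over $\CB(\Phi_{\rm flip}^A,\tau_2)$, split according to whether $K\cap A$ is nonempty (in which case $K\in\CB(\Phi,\tau_1)\setminus\CB(\Phi,\tau_2)$ and $K$ contributes with sign $+$) or empty ($K\in\CB(\Phi,\tau_2)\setminus\CB(\Phi,\tau_1)$, sign $-$). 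Moreover, by Lemma~\ref{flip}(i), in the first case $K\cap A$ has exactly one element, so $(-1)^{|K\cap A|}$ is $-1$ or $+1$ in the two cases respectively.

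Next I carry out the central computation: relate $y_K(\Phi,\beta)$ to $\flip_A\, y_K(\Phi_{\rm flip}^A,\beta')$ for a well chosen $\beta'$. Writing $\phi_j=\sum_{i\in K} u_{i,j}\phi_i$, the flipped relation is $\phi_j^{\rm flip}=\sum_{i\in K}\sigma_i\sigma_j u_{i,j}\phi_i^{\rm flip}$, so $g_j^{K,{\rm flip}}=e_j-\sum_{i\in K}\sigma_i\sigma_j u_{i,j}e_i$. A direct calculation then shows that the choice $\beta'=\sigma_A\beta$ (i.e.\ $\beta'_i=\sigma_i\beta_i$) yields
\[
\langle\beta',g_j^{K,{\rm flip}}\rangle=\sigma_j\langle\beta,g_j^K\rangle,
\]
so $\beta'$ is regular for $\Phi_{\rm flip}^A$ and the sign sets are related by $K^{c,+}_{\beta',{\rm flip}}=(K^{c,+}_\beta\setminus A)\cup(K^{c,-}_\beta\cap A)$, with the symmetric statement for $K^{c,-}_{\beta',{\rm flip}}$. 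Applying the involution $\flip_A$, which swaps $p_i\leftrightarrow q_i$ for $i\in A$, to the monomial of $y_K(\Phi_{\rm flip}^A,\beta')$, the index sets recombine exactly into $K^{c,+}_\beta$ and $K^{c,-}_\beta$, and the identity
\[
|K^{c,-}_{\beta',{\rm flip}}|-|K^{c,-}_\beta|=|K^{c,+}_\beta\cap A|-|K^{c,-}_\beta\cap A|\equiv |K^c\cap A|\pmod 2
\]
gives the clean comparison
\[
y_K(\Phi,\beta)=(-1)^{|K^c\cap A|}\,\flip_A\, y_K(\Phi_{\rm flip}^A,\beta').
\]

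Finally, I combine everything. Since $|K^c\cap A|=|A|-|K\cap A|$ and $(-1)^{|K\cap A|}$ equals $-1$ in the first half of the symmetric difference and $+1$ in the second half (by the previous paragraph), both sums align and produce a uniform overall sign $-(-1)^{|A|}$. Using Lemma~\ref{bases_topes_adjacents} to reassemble the union $\CB(\Phi_{\rm flip}^A,\tau_2)$, I obtain
\[
Y(\Phi,\tau_1,\beta)-Y(\Phi,\tau_2,\beta)= -(-1)^{|A|}\,\flip_A\, Y(\Phi_{\rm flip}^A,\tau_2,\beta'),
\]
and Theorem~\ref{th:polarized-sum} applied on both sides converts this into the desired identity \eqref{eq:wall-crossing}. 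The main obstacle is the bookkeeping of signs in the middle step: verifying that the transformation of edge generators under flipping combines with the action of $\flip_A$ to produce exactly the uniform factor $(-1)^{|K^c\cap A|}$, with no extra contribution from $K$ itself (the factor $\prod_{i\in K}(p_i+q_i)$ is $\flip_A$-invariant, which is what makes the computation work).
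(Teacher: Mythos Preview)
Your proof is correct and follows essentially the same approach as the paper: pass to the Lawrence--Varchenko form $Y$ via Theorem~\ref{th:polarized-sum}, cancel the common terms over $\CB(\Phi,\tau_1)\cap\CB(\Phi,\tau_2)$, use Lemma~\ref{bases_topes_adjacents} to identify the symmetric difference with $\CB(\Phi_{\rm flip}^A,\tau_2)$, and compare each surviving $K$-term with its counterpart in $\flip_A Y(\Phi_{\rm flip}^A,\tau_2,\beta')$ for $\beta'=\sigma_A\beta$. The paper carries out exactly this plan (its equation~\eqref{eq:flipped_beta} is your identity $\langle\beta',g_j^{K,{\rm flip}}\rangle=\sigma_j\langle\beta,g_j^K\rangle$), though it leaves the final sign check as ``we check easily'' where you spell out the computation $y_K(\Phi,\beta)=(-1)^{|K^c\cap A|}\flip_A\,y_K(\Phi_{\rm flip}^A,\beta')$ and the role of $|K\cap A|\in\{0,1\}$ from Lemma~\ref{flip}(i).
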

\begin{remark}
If  the tope  $\tau_2$ is not contained in $\c(\Phi)$, (res
$\c(\Phi_{\rm flip}^A)$),  then $\CG(\Phi, \tau_2)$, (resp. $\CG(\Phi_{\rm flip}^A,
\tau_2)$), is empty, hence $X (\Phi, \tau_2)=0$, (resp $X (\Phi_{\rm flip}^A,
\tau_2)=0$).
\end{remark}

\begin{remark}\label{trivial_wall_crossing}
By Remark \ref{topes and chambers}, the actual jumps occur only on walls between chambers. This in agreement with this formula: indeed if $H$ is not a wall between chambers, the tope $\tau_{12}$ is not contained in $\c(\Phi\cap H)$ and $\tau_2$ is not contained in $\c(\Phi_{\rm flip}^A)$.
\end{remark}

\begin{remark}

The theorem is trivially true if $\tau_1\nsubseteq \c(\Phi)$.
Indeed, in this case, we have $X(\phi,\tau_1)=0$, and
$A(\Phi,\tau_1,\tau_2)=\emptyset$, so that the right hand side of
(\ref{eq:wall-crossing}) is $X(\phi,\tau_2)-X(\phi,\tau_2)=0$.
\end{remark}
\begin{proof}[Proof of Theorem \ref{th:wall_crossing}]
 Let $\beta$ be a linear form on $\R^N$ which is regular for $\Phi$.
 Let $\beta^A$ be the linear form defined by
$$
\langle\beta^A, e_i\rangle= \sigma^A_i \langle\beta, e_i\rangle
$$
where $\sigma^A_i=-1$ if $i\in A$ and $\sigma^A_i=1$ if $i\in A^c$.
We have, for every $i$,
\begin{equation}\label{eq:flipped_beta}
\langle\beta^A, \rho_{\Phi_{\rm flip}^A,K}e_i\rangle= \sigma^A_i \langle\beta,
\rho_{\Phi,K}e_i\rangle.
\end{equation}
It follows, in particular, that  $\beta^A$ is regular for $\Phi_{\rm flip}^A$.

First we will prove the following relation between the polarized
sums
\begin{equation}\label{eq:sautY}
 Y(\Phi,\tau_2,\beta)- Y (\Phi, \tau_1,\beta)=(-1)^{|A|}\flip_A Y(\Phi_{\rm flip}^A,\tau_2,\beta^A).
\end{equation}
Then we obtain (\ref{eq:wall-crossing})  by applying  Theorem
\ref{polarized_sums}. We write
\begin{multline}
Y(\Phi,\tau_2,\beta)- Y (\Phi, \tau_1,\beta)= \\ \sum_{K\in
\CB(\Phi,\tau_2)}(-1)^{|{K^c_\beta}^-|}\prod_{i\in {K^c_\beta}^+
}p_i \prod_{i\in {K^c_\beta}^- }q_i\prod_{i\in K}(p_i+q_i)\\
 - \sum_{K\in
\CB(\Phi,\tau_1)}(-1)^{|{K^c_\beta}^-|}\prod_{i\in {K^c_\beta}^+
}p_i \prod_{i\in {K^c_\beta}^- }q_i\prod_{i\in K}(p_i+q_i)
\end{multline}
The terms for which  $K\in \CB(\Phi,\tau_1)\cap \CB(\Phi,\tau_2)$
cancel out.  For the other terms, we apply  Lemma
\ref{bases_topes_adjacents}.  Take a $K$ in  $
\CB(\Phi,\tau_1)\vartriangle \CB(\Phi,\tau_2)= \CB(\Phi_{\rm flip}^A,\tau_2)$.
Using (\ref{eq:flipped_beta}),
 we check easily that the unique $K$-term in
$Y(\Phi,\tau_2,\beta)-Y(\Phi,\tau_1,\beta) $ is
 equal to the $K$-term in \\
 $(-1)^{|A|}\flip_A Y(\Phi_{\rm flip}^A,
\tau_2,\beta^A)$.
\end{proof}

\begin{example}\label{ex:zonotopeB2_suite}
$N=4$ and
$\Phi=(\phi_1,\phi_2,\phi_3=\frac{1}{2}(\phi_2-\phi_1),\phi_4=\frac{1}{2}(\phi_1+\phi_2))$.
The tope  $\tau_1$ is  the open cone generated by $\phi_2$ and
$\phi_4$. The adjacent tope  $\tau_2$ is the open cone generated by
$\phi_4$ and $\phi_1$, see Fig. \ref{fig:B2topes}.
Then $\phi_2$ and $\phi_3$ lie on the
$\tau_1$-side of the common wall, so that $A=\{2,3\}$ and $\Phi_{\rm flip}^A=
(\phi_1,-\phi_2,-\phi_3,\phi_4)$. We obtain
\begin{eqnarray*}
X(\Phi,\tau_1)&=&
p_{{1}}p_{{2}}p_{{3}}p_{{4}}-p_{{1}}q_{{2}}q_{{3}}p_{{4}}-q_{{1}}p_{{2}}p_{{3
}}q_{{4}}+q_{{1}}q_{{2}}q_{{3}}q_{{4}}, \\
 X(\Phi,\tau_2) &=& p_{{1}}p_{{2}}p_{{3}}p_{{4}}+p_{{1}}q_{{2}}q_{{3}}q_{{4}}+q_{{1}}p_{{2}}p_{{3
}}p_{{4}}+q_{{1}}q_{{2}}q_{{3}}q_{{4}}, \\
X(\Phi_{\rm flip}^A,\tau_2)&=&
p_{{1}}p_{{2}}p_{{3}}p_{{4}}+p_{{1}}p_{{2}}p_{{3}}q_{{4}}+q_{{1
}}q_{{2}}q_{{3}}p_{{4}}+q_{{1}}q_{{2}}q_{{3}}q_{{4}}, \\
 \flip_A X(\Phi_{\rm flip}^A, \tau_2) &=&
p_{{1}}q_{{2}}q_{{3}}p_{{4}}+p_{{1}}q_{{2}}q_{{3}}q_{{4}}+q_{{1}}p_{{2}}p_{{3
}}p_{{4}}+q_{{1}}p_{{2}}p_{{3}}q_{{4}}, \\
 &=& X(\Phi,\tau_2)-X(\Phi,\tau_1) .
\end{eqnarray*}
\end{example}
\begin{figure}[!h]
\begin{center}
  \includegraphics[width=1.5 in]{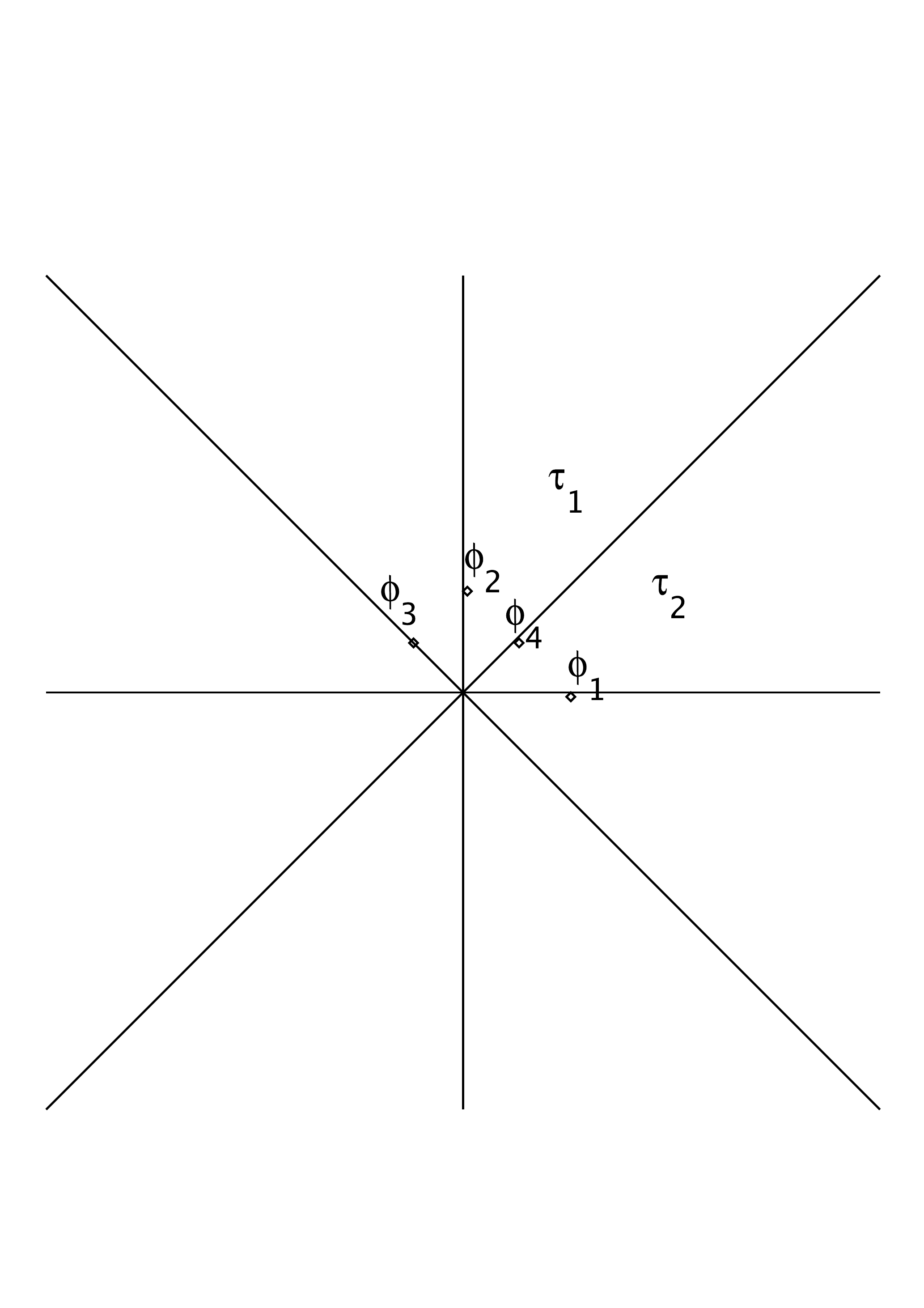}
  \caption{Topes $\tau_1$,$\tau_2$  for $\Phi=(\phi_1,\phi_2,\phi_3=\frac{1}{2}(\phi_2-\phi_1),\phi_4=\frac{1}{2}(\phi_1+\phi_2))$.}
  \label{fig:B2topes}
  \end{center}
\end{figure}
Later, in order to give a formula for the decomposition of $\varchenko(\Phi,\tau)$ in quadrants, we
will need to iterate the wall-crossing through a sequence of
consecutively adjacent topes. To help understand what we obtain, let us first cross two
consecutive walls.
\begin{corollary}\label{th:wallcrossing2steps}
Let  $\tau_1, \tau_2,\tau_3$ be pairwise different $\Phi$-topes such
that $\tau_2$ is adjacent to  $\tau_1$ and to $\tau_3$. Let
$A_1=A(\Phi,\tau_1,\tau_2)$,  $A_2=A(\Phi,\tau_2,\tau_3)$ and let
 $A_{1,2}$ be the symmetric difference $A_1\vartriangle
 A(\Phi_{\rm flip}^{A_1},\tau_2,\tau_3)$. Then the  three cones 
 $\c(\Phi_{\rm flip}^{A_1}), \c(\Phi_{\rm flip}^{A_2})$ and $\c(\Phi_{\rm flip}^{
A_{1,2}})$ are salient .

We have
\begin{multline}\label{eq:wallcrossing2steps}
  X(\Phi,\tau_1)=X(\Phi,\tau_3)-(-1)^{|A_1|}\flip_{A_1}X(\Phi_{\rm flip}^{A_1},\tau_3)\\
  -(-1)^{|A_2|}\flip_{A_2}X(\Phi_{\rm flip}^{A_2},\tau_3)
  +(-1)^{|A_{1,2}|}\flip_{A_{1,2}}X(\Phi_{\rm flip}^{ A_{1,2}},\tau_3).
\end{multline}
\end{corollary}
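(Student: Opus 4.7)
The plan is to iterate Theorem \ref{th:wall_crossing} three times and combine the results. First, applying the theorem to the adjacent pair $(\tau_1,\tau_2)$ yields
$$X(\Phi,\tau_1)=X(\Phi,\tau_2)-(-1)^{|A_1|}\flip_{A_1}X(\Phi_{\rm flip}^{A_1},\tau_2).$$
Next, I would apply the theorem to the pair $(\tau_2,\tau_3)$ with the ambient system $\Phi$, which expands $X(\Phi,\tau_2)$ in terms of $X(\Phi,\tau_3)$ and $\flip_{A_2}X(\Phi_{\rm flip}^{A_2},\tau_3)$. Setting $A':=A(\Phi_{\rm flip}^{A_1},\tau_2,\tau_3)$, I would apply the theorem once more to the same pair $(\tau_2,\tau_3)$, but now with ambient system $\Phi_{\rm flip}^{A_1}$, obtaining
$$X(\Phi_{\rm flip}^{A_1},\tau_2)=X(\Phi_{\rm flip}^{A_1},\tau_3)-(-1)^{|A'|}\flip_{A'}X\bigl((\Phi_{\rm flip}^{A_1})_{\rm flip}^{A'},\tau_3\bigr).$$

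The algebraic fact that makes this collapse to the stated four-term formula is that flips compose as symmetric differences: $(\Phi_{\rm flip}^{A_1})_{\rm flip}^{A'}=\Phi_{\rm flip}^{A_1\vartriangle A'}$ and, as ring automorphisms of $\Z[p_i,q_i]$, $\flip_{A_1}\circ\flip_{A'}=\flip_{A_1\vartriangle A'}$, since swapping $p_i$ and $q_i$ twice is the identity. Since $A_{1,2}:=A_1\vartriangle A'$ by definition, substituting the two expansions into the first formula and using the fact that $\flip_{A_1}$ is a ring homomorphism produces the four terms of (\ref{eq:wallcrossing2steps}). The sign on the last term is $(-1)^{|A_1|+|A'|}$, which equals $(-1)^{|A_{1,2}|}$ because $|A_1\vartriangle A'|=|A_1|+|A'|-2|A_1\cap A'|\equiv|A_1|+|A'|\pmod 2$; this matches the claim.

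The only remaining work is verifying the salience of the three cones. For $\c(\Phi_{\rm flip}^{A_1})$ this is precisely the conclusion of Lemma \ref{bases_topes_adjacents} applied to $(\tau_1,\tau_2)$, which moreover guarantees that $\tau_2$ is contained in at least one of $\c(\Phi)$ or $\c(\Phi_{\rm flip}^{A_1})$. For $\c(\Phi_{\rm flip}^{A_2})$ and $\c(\Phi_{\rm flip}^{A_{1,2}})$, I would apply the same lemma to the pair $(\tau_2,\tau_3)$ with the appropriate ambient sequence ($\Phi$ or $\Phi_{\rm flip}^{A_1}$, according to which one actually contains $\tau_2$ in its cone). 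The step I expect to require the most care is the edge case $\tau_2\not\subset\c(\Phi)$: then $X(\Phi,\tau_2)=0$ and one intermediate expansion degenerates, but the remark following Theorem \ref{th:wall_crossing} shows the formula remains symbolically valid with the understanding that the corresponding $X$ vanishes. Apart from this bookkeeping, the proof is pure iteration of the first-order wall-crossing formula combined with the elementary composition law for flips.
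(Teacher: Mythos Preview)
Your proof is correct and follows essentially the same route as the paper's: iterate Theorem~\ref{th:wall_crossing} once for $(\tau_1,\tau_2)$ and then for $(\tau_2,\tau_3)$ with ambient systems $\Phi$ and $\Phi_{\rm flip}^{A_1}$, collapse using the composition law $\flip_{A_1}\circ\flip_{A'}=\flip_{A_1\vartriangle A'}$ and $(\Phi_{\rm flip}^{A_1})_{\rm flip}^{A'}=\Phi_{\rm flip}^{A_1\vartriangle A'}$, and handle the sign via parity of the symmetric difference. Your treatment of the salience claim and of the degenerate case $\tau_2\not\subset\c(\Phi)$ is, if anything, slightly more explicit than the paper's, which simply invokes ``the very definition of the flipped systems.''
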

\begin{proof}
We apply the wall-crossing theorem  to $(\Phi,\tau_1,\tau_2)$. We
obtain
$$
 X(\Phi,\tau_1)=
 X(\Phi,\tau_2)-(-1)^{|A_1|}\flip_{A_1}X(\Phi_{\rm flip}^{A_1},\tau_2).
$$
In the right hand side,  we transform each  term by crossing the
wall from $\tau_2$ into $\tau_3$. First,
$$
 X(\Phi,\tau_2)=
 X(\Phi,\tau_3)-(-1)^{|A_2|}\flip_{A_2}X(\Phi_{\rm flip}^{A_2},\tau_3).
$$
In order to apply the wall-crossing to $(\Phi_{\rm flip}^{A_1},\tau_2,\tau_3)$,
we observe that the sign rule implies
\begin{eqnarray*}
(\Phi_{\rm flip}^{A_1})^{A(\Phi_{\rm flip}^{A_1},\tau_2,\tau_3)}&=&\Phi_{\rm flip}^{A_1\vartriangle
A(\Phi_{\rm flip}^{A_1},\tau_2,\tau_3)},\\
\flip_{A_1}\circ\flip_{A(\Phi_{\rm flip}^{A_1},\tau_2,\tau_3)}&=&\flip_{A_1\vartriangle
A(\Phi_{\rm flip}^{A_1},\tau_2,\tau_3)}.
\end{eqnarray*}
Moreover,
$(-1)^{|A_1|}(-1)^{A(\Phi_{\rm flip}^{A_1},\tau_2,\tau_3)}=(-1)^{A_1\vartriangle
A(\Phi_{\rm flip}^{A_1},\tau_2,\tau_3)}$. Hence we obtain
\begin{multline*}
-(-1)^{|A_1|}\flip_{A_1}X(\Phi_{\rm flip}^{A_1},\tau_2)=\\
-(-1)^{|A_1|}\flip_{A_1}X(\Phi_{\rm flip}^{A_1},\tau_3)+(-1)^{|A_{1,2}|}\flip_{A_{1,2}}X(\Phi_{\rm flip}^{
A_{1,2}},\tau_3).
\end{multline*}
This proves (\ref{eq:wallcrossing2steps}). The cones are salient by
the very definition of the flipped systems $\Phi_{\rm flip}^{A_i}$ and
$(\Phi_{\rm flip}^{A_1})^{A(\Phi_{\rm flip}^{A_1},\tau_2,\tau_3)}$.
\end{proof}

We now cross a number of walls to go from a tope $\tau$ to another tope $\nu$. A signed subset of $\{1,2,\ldots, N\}$ is a list $[\epsilon, I]$ where $I$ is a subset of $\{1,2,\ldots, N\}$ and $\epsilon=\pm 1$ a sign.
\begin{definition}
Let $\tau$ and $\nu$  be two topes, and let us choose a sequence
  $\tau_k$,
 $k=1,\dots,\ell$ of topes such that $\tau_{k+1}$
is adjacent to $\tau_k$ for every $1\leq k\leq \ell-1$, and $\tau_1=\tau, \tau_\ell=\nu$.

 For every
sequence $K=(1\leq k_1<\cdots<k_s\leq \ell-1)$,
 let $A_K\subseteq
\{1,\dots,N\}$ be the subset defined recursively as follows.  If
$s=0$, that is $K=\emptyset$, then $A_\emptyset=\emptyset$. If
$K=(k_1,k_2,\dots,k_s)$ with $s\geq 1$, let
$A=A_{(k_1,\dots,k_{s-1})}$, then
$$
A_K=A \vartriangle A(\Phi_{\rm flip}^A,\tau_{k_{s}},\tau_{k_{s}+1}).
$$

We define the list $\CA(\nu,\tau)$  to be the list of signed subsets  $[(-1)^{|K|}, A_K]$ of $\{1,2,\ldots, N\}$ so obtained.
\end{definition}

\begin{remark}
The list $\CA(\nu,\tau)$ depends of the choice of path of adjacent topes from $\tau$ to $\nu$ , but we do not indicate this in the notation.

\end{remark}

We obtain the following result, if there are $\ell-1$ wall crossings to go from $\tau$ to $\nu$.

\begin{corollary}\label{th:wallcrossing_k_steps}
 Let $\nu$ be a tope.  Then for every $[\epsilon,A]\in \CA(\nu,\tau)$,  the cone $\c(\Phi_{\rm flip}^{A})$ is salient. Furthermore,
we have
\begin{equation*}
 X(\Phi,\tau)=\sum_{[\epsilon,A]\in \CA(\nu,\tau)} \epsilon
 \flip_{A}X(\Phi_{\rm flip}^{A},\nu).
\end{equation*}
\end{corollary}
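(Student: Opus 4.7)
The natural approach is induction on the length $\ell$ of the chain of adjacent topes $\tau = \tau_1, \tau_2, \ldots, \tau_\ell = \nu$. The base case $\ell = 1$ is trivial, since $\CA(\tau,\tau)$ then contains only $[1,\emptyset]$ and $\Phi_{\rm flip}^\emptyset = \Phi$; the case $\ell = 2$ is exactly Theorem~\ref{th:wall_crossing}; and the case $\ell = 3$ is Corollary~\ref{th:wallcrossing2steps}, whose proof supplies the template for the induction step. For the inductive step, I would assume the formula holds for the truncated chain of length $\ell - 1$, giving
\begin{equation*}
X(\Phi, \tau) = \sum_{K \subseteq \{1, \ldots, \ell-2\}} \epsilon_K\, \flip_{A_K} X(\Phi_{\rm flip}^{A_K}, \tau_{\ell-1}),
\end{equation*}
and then apply Theorem~\ref{th:wall_crossing} to each summand across the final wall separating $\tau_{\ell-1}$ and $\tau_\ell$. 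For each $K$, setting $B_K := A(\Phi_{\rm flip}^{A_K}, \tau_{\ell-1}, \tau_\ell)$, the wall-crossing theorem rewrites $X(\Phi_{\rm flip}^{A_K}, \tau_{\ell-1})$ as a difference of two terms evaluated at $\nu$.

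Substituting back produces two families of contributions. The first family reproduces exactly the summands indexed by subsets $K \subseteq \{1,\ldots,\ell-2\}$ of the full index set $\{1,\ldots,\ell-1\}$. The second family should reproduce the summands indexed by $K \cup \{\ell-1\}$. To match them, I would use the three compatibility identities already exploited in the proof of Corollary~\ref{th:wallcrossing2steps}:
\begin{equation*}
(\Phi_{\rm flip}^{A_K})^{B_K} = \Phi_{\rm flip}^{A_K \vartriangle B_K}, \qquad \flip_{A_K} \circ \flip_{B_K} = \flip_{A_K \vartriangle B_K}, \qquad (-1)^{|A_K|+|B_K|} = (-1)^{|A_K \vartriangle B_K|}.
\end{equation*}
These are formal consequences of the fact that indicator functions of subsets of $\{1,\ldots,N\}$ form a $\Z/2\Z$-module under symmetric difference. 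Since, by the recursive definition, $A_{K \cup \{\ell-1\}} = A_K \vartriangle B_K$, the second family collapses precisely into the expected summands, and the induction step closes after a routine sign bookkeeping check.

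For the salience assertion, I would strengthen the induction hypothesis to record that at each stage every cone $\c(\Phi_{\rm flip}^{A_K})$ appearing in the formula is salient, together with the auxiliary statement (needed for the next wall crossing) that $\tau_\ell$ lies in at least one of $\c(\Phi_{\rm flip}^{A_K})$ or $\c(\Phi_{\rm flip}^{A_K \vartriangle B_K})$. Lemma~\ref{bases_topes_adjacents}, invoked for the pair $(\tau_{\ell-1},\tau_\ell)$ with respect to the flipped system $\Phi_{\rm flip}^{A_K}$, delivers precisely these two statements. In the cases where some intermediate tope fails to lie in the current flipped cone, the remark following Theorem~\ref{th:wall_crossing} makes the corresponding polynomial vanish, so the wall-crossing identity and hence the induction still go through trivially for that term. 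The main obstacle is purely organizational, namely to carry along compatibly the signs, the successive symmetric differences, and the inclusion conditions that trigger each intermediate wall-crossing; once these are tracked, the argument is formal.
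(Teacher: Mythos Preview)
Your proposal is correct and follows essentially the same approach as the paper: induction on the length $\ell$ of the chain of adjacent topes, applying Theorem~\ref{th:wall_crossing} at each step and using the symmetric-difference compatibility identities already exhibited in the proof of Corollary~\ref{th:wallcrossing2steps}. The paper's own proof is terser (``The general case is immediate by induction''), but your more explicit treatment of the two families of terms, the recursive identity $A_{K\cup\{\ell-1\}}=A_K\vartriangle B_K$, and the salience bookkeeping via Lemma~\ref{bases_topes_adjacents} simply spells out what the paper leaves implicit.
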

\begin{proof} The recursion rule means that,  when we travel through the
sequence of topes $\tau_i, i=1,\dots,\ell $, and apply Formula
(\ref{eq:wall-crossing}),  we choose the
flipped term when we cross the wall between $\tau_{k_i}$ and
$\tau_{k_i+1}$, and the unflipped term for the other walls. For
instance, $A_{\{1\}}=A(\Phi,\tau_1,\tau_{2})$, and
$A_{\{1,2\}}=A_{\{1\}}\vartriangle
A(\Phi_{\rm flip}^{A_{\{1\}}},\tau_2,\tau_3)$, in agreement with the two-step
wall-crossing formula. The general case is immediate by induction.
\end{proof}

\subsection{Semi-closed partition polytopes}
In order to state the geometric consequences of the above
combinatorial wall-crossing formulas, we introduce some semi-closed
partition polytopes, to which the Brianchon-Gram theorem extends
naturally.
\begin{definition}\label{flipped-polytope}
$$
\p(\Phi,A,\lambda)=\{x\in V(\Phi,\lambda), \;\;  x_i>0 \mbox{ for }
i\in A, x_i\geq 0 \mbox{ for } i\in A^c.\}
$$
\end{definition}
When $\lambda$ is regular, the closure of $\p(\Phi,A, \lambda)$ is the partition polytope
$\p(\Phi,\lambda)$.
\begin{definition}For $A\subseteq \{1,\dots,N\}$
we denote by $\geom_A$ the map  from $W$ to the space of functions
on $\R^N$ defined by substituting $1-p_i$ for $q_i$,  then
$\suppresschi{[x_i\geq 0]}$ for $p_i$ if $i\notin A$ and $\suppresschi{[x_i> 0]}$
for $p_i$ if $i\in A$.
 \end{definition}

When $A$ is the empty set, the substitution $\geom_{\emptyset}$ coincide with the usual substitution $\geom$ defined before.
When we consider  non empty subsets $A\subseteq \{1,\dots,N\}$, we
obtain an extension of the  Brianchon-Gram theorem to these
semi-closed polytopes.

\begin{proposition}\label{semi-closed-brianchon-gram}
Let $A\subseteq \{1,\dots,N\}$.  For  $ \lambda\in \tau$, we have
\begin{equation}
\geom_A X(\Phi,\tau)\, \suppresschi{[V(\Phi,\lambda)]}=\suppresschi{[\p(\Phi,A,
\lambda)]}.
\end{equation}
\end{proposition}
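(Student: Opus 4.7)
The plan is to reduce the semi-closed statement to the closed case already established in Corollary \ref{th:continuity-on-closed-tope}, by a translation-plus-limit argument. The idea is to push the $A$-facets of the partition polytope slightly inward by an amount $\epsilon>0$, so that the resulting closed polytope is the translate of an honest partition polytope $\p(\Phi,\lambda_\epsilon)$ with $\lambda_\epsilon$ still in $\tau$, and then to let $\epsilon\to 0^+$.

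First, I would fix $\lambda\in\tau$ and set $\lambda_\epsilon:=\lambda-\epsilon\sum_{i\in A}\phi_i$. Because $\tau$ is open, $\lambda_\epsilon\in\tau$ for every sufficiently small $\epsilon>0$, so Corollary \ref{th:continuity-on-closed-tope}(i) yields
$$\varchenko(\Phi,\tau)\,[V(\Phi,\lambda_\epsilon)]=[\p(\Phi,\lambda_\epsilon)].$$
Next I would apply the translation $y\mapsto x=y+\epsilon\sum_{i\in A}e_i$. This bijects $V(\Phi,\lambda_\epsilon)$ onto $V(\Phi,\lambda)$ and $\p(\Phi,\lambda_\epsilon)$ onto
$$\p^\epsilon:=\{x\in V(\Phi,\lambda):x_j\geq\epsilon\text{ for }j\in A,\ x_j\geq 0\text{ for }j\notin A\},$$
a closed polytope. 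Using $\varchenko(\Phi,\tau)=\sum_{I\in\CG(\Phi,\tau)}(-1)^{|I|-\dim F}\prod_{j\in I^c}[x_j\geq 0]$ and pulling back under the translation, the identity becomes, for every $x$,
$$\sum_{I\in\CG(\Phi,\tau)}(-1)^{|I|-\dim F}\prod_{j\in I^c\cap A}[x_j\geq\epsilon]\prod_{j\in I^c\setminus A}[x_j\geq 0]\cdot[V(\Phi,\lambda)](x)=[\p^\epsilon](x).$$

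Finally, I would take the pointwise limit $\epsilon\to 0^+$. For each fixed $x$ and each $j$, $[x_j\geq\epsilon]\to[x_j>0]$; since the sum over $I$ is finite, the left-hand side converges pointwise to $\geom_A X(\Phi,\tau)(x)\cdot[V(\Phi,\lambda)](x)$ (directly from the definitions of $\geom_A$ and of $X(\Phi,\tau)$: substituting $q_i=1-p_i$ makes each factor $p_i+q_i$ collapse to $1$, leaving exactly $\sum_I(-1)^{|I|-\dim F}\prod_{j\in I^c}p_j^A$). The right-hand side $[\p^\epsilon](x)$ converges to $[\p(\Phi,A,\lambda)](x)$, giving the desired identity.

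The only subtle point is that $\lambda_\epsilon$ remain in $\tau$ for small $\epsilon$, which is immediate from the openness of $\tau$. Everything else is a routine pointwise limit of a finite sum of indicator functions, so I do not expect significant obstacles.
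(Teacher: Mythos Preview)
Your proof is correct and takes a genuinely different route from the paper. The paper proceeds by induction on $|A|$: assuming the result for $A'=A\setminus\{N\}$, it shows that the difference $(\geom_{A'}X(\Phi,\tau)-\geom_A X(\Phi,\tau))\,[V(\Phi,\lambda)]$ equals $[\p(\Phi,A',\lambda)]\,[x_N=0]$ by recognizing it as the (semi-closed) Brianchon--Gram decomposition of the facet $\p(\Phi',A',\lambda)\subset\{x_N=0\}$, where $\Phi'=(\phi_i)_{i<N}$; this reduces both $|A|$ and $N$ by one. Your translation-plus-limit argument is shorter and more direct: it bypasses the induction and the facet analysis entirely, and in fact only needs the open-tope identity (Equation~(\ref{eq:Xgeom_and_BGtheorem})) rather than the full strength of Corollary~\ref{th:continuity-on-closed-tope}. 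The paper's inductive proof, by contrast, makes explicit how the semi-closed formula is assembled from lower-dimensional Brianchon--Gram identities, which is informative combinatorially even if not needed for the bare statement.
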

\begin{proof}
When $A=\emptyset$, it is exactly the Brianchon-Gram theorem. We
proceed by induction on the cardinality of $A$.  If $A\neq
\emptyset$, we can assume that $N\in A$, up to renumbering. Let
$A'=A\setminus \{N\}$.

We write $ \suppresschi{[\p(\Phi, A, \lambda)]}=\suppresschi{[\p(\Phi, A',\lambda
)]}+ \left(\suppresschi{[\p(\Phi,A,\lambda )]}- \suppresschi{[\p(\Phi, A',\lambda
)]}\right)$. We have
$$
\suppresschi{[\p(\Phi,A',\lambda )]}- \suppresschi{[\p(\Phi, A,\lambda
)]}=\suppresschi{[\p(\Phi, A',\lambda )]}\suppresschi{[x_N=0]}.
$$

 Let us show that we have
\begin{equation}\label{eq:recurrence_Geometric_Brianchon_Gram_function}
(\geom_{A'} X(\Phi,\tau)-\geom_A X(\Phi,\tau))\suppresschi{[V(\Phi,\lambda)]}=
\suppresschi{[\p(\Phi, A',\lambda )]}\suppresschi{[x_N=0]}.
\end{equation}
We first prove (\ref{eq:recurrence_Geometric_Brianchon_Gram_function}) in the case
where $A=\{N\}$, hence $A'=\emptyset$.

We observe that the right hand side of
(\ref{eq:recurrence_Geometric_Brianchon_Gram_function}) is the
characteristic function of the face of $\p(\Phi, \lambda )$ defined by
$x_N=0$. If we identify the hyperplane $\{x_N=0\}$ with $\R^{N-1}$,
 this face is the partition polytope  $\p(\Phi', \lambda )$ corresponding to
$\Phi'=(\phi_i), 1\leq i\leq N-1$ and $\lambda\in F$.

We now look at the left hand side of
(\ref{eq:recurrence_Geometric_Brianchon_Gram_function}).
We see that
\begin{multline} \label{eq:brianchon-gram_facet}
\geom_{\emptyset} X(\Phi,\tau)-\geom_{\{N\}} X(\Phi,\tau)=\\
\left(\sum_{\stackrel{I\in \CG(\Phi,\tau),}{  I^c\ni N}}(-1)^{|I|-\dim F}
\prod_{i\in I^c, i\neq N}\suppresschi{[x_i\geq 0]}\right)\suppresschi{[x_N=0]},
\end{multline}
because the terms indexed by the subsets $I$ such that $N\notin I^c$
cancel out in the difference.

If $\Phi'$ does not generate $\Phi$, we see that both sides of the equation
(\ref{eq:recurrence_Geometric_Brianchon_Gram_function}) are  equal to $0$.
 Indeed as $\lambda$ is regular, it cannot be contained
 in the smaller dimensional space generated by $\Phi'$,
 and every generating subset in $\CG(\Phi,\tau)$ contains the index $N$.

Now assume that $\Phi'$ generates $F$. The
$\Phi$-tope $\tau$ is contained in a unique $\Phi'$-tope $\tau'$.
The set $\CG(\Phi',\tau')$ consists  precisely of the subsets
$I'\subseteq \{1, \dots, N-1\}$ such that $I'\in \CG(\Phi,\tau)$.

Therefore the right hand side of
(\ref{eq:brianchon-gram_facet}) is the Brianchon-Gram decomposition
of the facet $\p(\Phi', \lambda )$. Thus  we have proved
 (\ref{eq:recurrence_Geometric_Brianchon_Gram_function}) in the case where $A=\{N\}$.

The general case when $A' \neq \emptyset$ is similar. We have now
\begin{multline*}
\geom_{A'} X(\Phi,\tau)-\geom_A X(\Phi,\tau)=\\
\left(\sum_{\stackrel{I\in \CG(\Phi,\tau),}{  I^c\ni N}}(-1)^{|I|-\dim F}
\prod_{i\in I^c \cap A'} \suppresschi{[x_i> 0]} \prod_{\stackrel{i\in I^c \cap A'^c,}{
i\neq N}}\suppresschi{[x_i\geq 0]} \right)\suppresschi{[x_N=0]}.
\end{multline*}
By the induction hypothesis, the right hand side of this equality
is the Brianchon-Gram decomposition of the semi-closed polytope \\
$\p(\Phi', A',\lambda )=\p(\Phi, A',\lambda )\cap \{x_N=0\} $.
\end{proof}

\begin{remark}
The formula is not necessarily true on the boundary of $\tau$,
as shown by the trivial example $\Phi=(\phi_1)$, $A=\{1\}$, $\lambda=0$.
\end{remark}
It will be useful to rephrase Proposition
\ref{semi-closed-brianchon-gram} in the terms which arise in the
combinatorial  wall-crossing Theorem \ref{th:wall_crossing}.

\begin{definition}
    For $A\subseteq\{1,\dots,N\}$ such that
    $\c(\Phi_{\rm flip}^A)$ is salient and $\lambda\in F$, let
\begin{multline}\label{eq:flipped_polytope}
\p_{\rm flip}(\Phi,A,\lambda)=\\
\{x\in \R^N\; ; \sum_i x_i\phi_i =\lambda, x_i<0  \mbox{ for }  i\in A, x_i\geq 0
\mbox{ for } i\notin A\}.
\end{multline}
\end{definition}

\begin{proposition}\label{geometric_flip}
Let $A\subseteq\{1,\dots,N\}$ be such that the cone $\c(\Phi_{\rm flip}^A)$ is
salient. Let $\tau$ be a $\Phi$-tope. Then, for $\lambda\in \tau$ we
have
\begin{equation}\label{eq:geometric_flip}
\geom_\emptyset \flip_A
X(\Phi_{\rm flip}^A,\tau)\suppresschi{[V(\Phi,\lambda)]}=\suppresschi{[\p_{\rm flip}(\Phi,A,\lambda)]}.
\end{equation}
\end{proposition}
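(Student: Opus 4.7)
The plan is to deduce this identity from Proposition \ref{semi-closed-brianchon-gram}, applied to the flipped sequence $\Phi_{\rm flip}^A$ together with the subset $A$, by pulling back along the sign-flip involution
$$\sigma_A : \R^N \to \R^N, \qquad \sigma_A(x)_i = -x_i \text{ if } i \in A, \quad \sigma_A(x)_i = x_i \text{ if } i \notin A.$$
Writing $y=\sigma_A(x)$ and using the signs $\sigma_i^A$ of Definition \ref{def:flipped_Phi}, the identity $\sum_i x_i\phi_i = \sum_i y_i\,\sigma_i^A\phi_i$ shows that $\sigma_A$ is a linear bijection $V(\Phi,\lambda)\to V(\Phi_{\rm flip}^A,\lambda)$, while the conditions ``$x_i<0$ for $i\in A$, $x_i\geq 0$ for $i\notin A$'' translate into ``$y_i>0$ for $i\in A$, $y_i\geq 0$ for $i\notin A$''. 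Hence $\sigma_A$ also restricts to a bijection $\p_{\rm flip}(\Phi,A,\lambda)\to \p(\Phi_{\rm flip}^A,A,\lambda)$, so that
$$[V(\Phi,\lambda)](x)=[V(\Phi_{\rm flip}^A,\lambda)](\sigma_A(x)), \qquad [\p_{\rm flip}(\Phi,A,\lambda)](x)=[\p(\Phi_{\rm flip}^A,A,\lambda)](\sigma_A(x)).$$

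The combinatorial heart of the proof is then the substitution identity
$$[\geom_\emptyset \flip_A P](x) \;=\; [\geom_A P](\sigma_A(x)) \qquad \text{for every } P\in W.$$
By linearity it suffices to verify this on the basis monomials $w_B$. Tracking a factor $p_i$: if $i\in A$, the composition $\geom_\emptyset \circ \flip_A$ sends it to $[x_i<0]$, while $\geom_A$ sends it to $[x_i>0]$, which after pull-back by $\sigma_A$ becomes $[-x_i>0]=[x_i<0]$; for $i\notin A$ both sides give $[x_i\geq 0]$. The factors $q_i=1-p_i$ are handled the same way. This is the step requiring the most care, because of the subtle interplay between strict and non-strict inequalities built into $\geom_A$ and the sign flips of $\sigma_A$; once it is in hand, the rest is formal.

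Finally, walls depend only on the hyperplanes spanned by the $\phi_i$, not on their orientations, so $\tau$ is also a $\Phi_{\rm flip}^A$-tope; by hypothesis $\c(\Phi_{\rm flip}^A)$ is salient, so Proposition \ref{semi-closed-brianchon-gram} applies to the triple $(\Phi_{\rm flip}^A,\tau,A)$, giving, for $\lambda\in\tau$,
$$[\geom_A X(\Phi_{\rm flip}^A,\tau)](y)\cdot [V(\Phi_{\rm flip}^A,\lambda)](y)=[\p(\Phi_{\rm flip}^A,A,\lambda)](y).$$
(If $\tau\not\subset\c(\Phi_{\rm flip}^A)$ the identity holds trivially since both sides vanish.) Evaluating at $y=\sigma_A(x)$ and inserting the two pull-back identities from the first step together with the substitution identity from the second step turns this equality into
$$[\geom_\emptyset \flip_A X(\Phi_{\rm flip}^A,\tau)](x)\cdot [V(\Phi,\lambda)](x)=[\p_{\rm flip}(\Phi,A,\lambda)](x),$$
which is exactly \eqref{eq:geometric_flip}. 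The main obstacle is cleanly packaging the substitution identity of the second step; everything else is a change of variable via $\sigma_A$.
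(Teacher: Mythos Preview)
Your proposal is correct and follows essentially the same route as the paper's own proof: both reduce to Proposition~\ref{semi-closed-brianchon-gram} applied to the triple $(\Phi_{\rm flip}^A,\tau,A)$ via the sign-flip involution $\sigma_A$, establishing the substitution identity $\geom_\emptyset\circ\flip_A = (\geom_A)\circ\sigma_A$ and the bijections $V(\Phi,\lambda)\leftrightarrow V(\Phi_{\rm flip}^A,\lambda)$ and $\p_{\rm flip}(\Phi,A,\lambda)\leftrightarrow \p(\Phi_{\rm flip}^A,A,\lambda)$. Your write-up is in fact slightly more explicit than the paper's, spelling out the case-check of the substitution identity and the reason why $\tau$ is also a $\Phi_{\rm flip}^A$-tope.
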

\begin{proof}
For any  polynomial $Z\in\C[p_i,q_i]$,   we have
\begin{equation}\label{eq:geomflipA}
\geom_\emptyset\flip_A(Z)= \geom_A(Z)\circ \sigma^A
\end{equation}
where $\sigma^A x = (\sigma^A_i x_i) $ with $\sigma^A_i=-1$ if $i\in
A$ and $\sigma^A_i=1$ if $i\notin A$.  Moreover we have
$$
\suppresschi{[V(\Phi, \lambda)]}= \suppresschi{[V(\Phi_{\rm flip}^A, \lambda)]}\circ
\sigma^A.
$$
Thus
\begin{multline}\label{eq:geomA}
\geom_\emptyset \flip_A
X(\Phi_{\rm flip}^A,\tau)\suppresschi{[V(\Phi,\lambda]}=\\
\bigl(\geom_A X(\Phi_{\rm flip}^A,\tau)\suppresschi{[V(\Phi_{\rm flip}^A, \lambda)]}\bigr)\circ
\sigma^A.
\end{multline}
We  apply Proposition \ref{semi-closed-brianchon-gram} to the
sequence $\Phi_{\rm flip}^A$ and the $\Phi_{\rm flip}^A$-tope $\tau$. We obtain that the
right hand side of (\ref{eq:geomA}) is equal to
$$
\suppresschi{[\p(\Phi_{\rm flip}^A,A,\lambda]}\circ \sigma^A .
$$
By definition of $\p(\Phi_{\rm flip}^A,A,\lambda)$, this is precisely  the
characteristic function of the set of $x$ such that $\sigma^A_i
x_i=-x_i>0$ for $i\in A$ and $\sigma^A_i x_i= x_i\geq 0$ for
$i\notin A$,   and $\sigma^A x\in V(\Phi_{\rm flip}^A,
\lambda)$, i.e. $x\in \p_{\rm flip}(\Phi,A,\lambda)$.
\end{proof}

\subsection{Decomposition in  quadrants}
Recall that when $\tau$ and $\nu$ are two topes,
we have defined  a list $\CA(\nu,\tau)$ of signed subsets $[\epsilon,A]$ of $\{1,2,\ldots,N\}$.
\begin{theorem} \label{th:salient}
Let $z(\Phi,\tau,B), B\subset\{1,\dots,N\}$ be the
collection of coefficients of the Combinatorial Brianchon-Gram
polynomial associated to the $\Phi$-tope $\tau$.
\begin{equation*}
X(\Phi,\tau)=\sum_B z(\Phi,\tau,B)\prod_{i\notin B}p_i\prod_{i\in
 B}q_i.
 \end{equation*}

\noindent (i) If   $B=\emptyset$,   then  $z(\Phi,\tau,B)=1$ while if $B=\{1,2,\ldots,N\}$,  then $z(\Phi,\tau,B)=(-1)^d$.

\noindent (ii) If   $ z(\Phi,\tau,B)\neq 0$,  then   the cone $\c(\Phi_{\rm flip}^B)$ is
salient .

\noindent (iii) More precisely, if $z(\Phi,\tau,B)$ is not equal to $0$,
choose $x=(x_i)\in Q_{\rm neg}^B$ such that $\lambda=\sum_i x_i\phi_i$ is a regular element in $F$ and let $\nu$ be the tope containing $\lambda$. Then $B$ occurs in the list  in $\mathcal A(\nu,\tau)$ and we have
\begin{equation}\label{eq:calculcoeff}
 z(\Phi,\tau,B)=(-1)^{|B|}\sum_{\{[\epsilon,B]\in \CA(\nu,\tau)\}}\epsilon.
\end{equation}
\end{theorem}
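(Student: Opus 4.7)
The theorem has three claims, treated sequentially. Part (i) is read off directly from the defining expansion of $X(\Phi,\tau)$. For $w_\emptyset = p_1 \cdots p_N$, the coefficient $1$ is exactly Proposition \ref{th:quadrants}(i) applied to $Z = X(\Phi,\tau)$, whose defining identity (\ref{eq:Xgeom_and_BGtheorem}) is the required hypothesis (\ref{Z}). For $w_{\{1,\ldots,N\}} = q_1 \cdots q_N$, the factor $\prod_{j \in I^c} p_j$ in each summand of $X(\Phi,\tau)$ forces $I^c = \emptyset$, so only the term $I = \{1,\ldots,N\}$ contributes, producing the sign $(-1)^{N-r} = (-1)^d$.

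For (iii), the plan is to combine the iterated wall-crossing formula (Corollary \ref{th:wallcrossing_k_steps}) with the continuity statement (Proposition \ref{th:quadrants}). Given $B$ with $z(\Phi,\tau,B)\neq 0$, note that $\Phi_{\rm flip}^B$ still generates $F$, so $\widetilde{\c}(\Phi_{\rm flip}^B)$ has non-empty interior in $F$, and we can choose $x \in Q_{\rm neg}^B$ so that $\lambda := \sum_i x_i\phi_i$ is regular. Let $\nu$ be the tope containing $\lambda$. Since $\lambda$ is regular and lies in $\c(\Phi_{\rm flip}^B)$, it sits in the interior of this cone; because $\nu$ is a connected component of the regular set and the boundary of $\c(\Phi_{\rm flip}^B)$ is a union of walls, we conclude $\nu \subseteq \c(\Phi_{\rm flip}^B)$.

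Applying Corollary \ref{th:wallcrossing_k_steps} and noting that $\flip_A$ permutes the monomial basis via $\flip_A(w_D) = w_{A \vartriangle D}$, extracting the coefficient of $w_B$ on both sides gives
\begin{equation*}
z(\Phi,\tau,B) = \sum_{[\epsilon,A]\in \CA(\nu,\tau)} \epsilon\,(-1)^{|A|}\, z(\Phi_{\rm flip}^A,\nu, A \vartriangle B).
\end{equation*}
The crux is the claim $z(\Phi_{\rm flip}^A,\nu, A \vartriangle B) = \delta_{A,B}$. For $A = B$, the containment $\nu \subseteq \c(\Phi_{\rm flip}^B)$ lets us invoke Proposition \ref{th:quadrants}(i) for the data $(\Phi_{\rm flip}^B,\nu)$, giving $z(\Phi_{\rm flip}^B,\nu,\emptyset) = 1$. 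For $A \neq B$, either $\nu \not\subseteq \c(\Phi_{\rm flip}^A)$, in which case $X(\Phi_{\rm flip}^A,\nu) = 0$ outright, or Proposition \ref{th:quadrants} applied to $(\Phi_{\rm flip}^A,\nu)$ with subset $A \vartriangle B \neq \emptyset$ would force $\widetilde{\c}((\Phi_{\rm flip}^A)_{\rm flip}^{A \vartriangle B}) = \widetilde{\c}(\Phi_{\rm flip}^B)$ to be disjoint from $\nu$, contradicting $\lambda \in \widetilde{\c}(\Phi_{\rm flip}^B) \cap \nu$. Substituting back yields the formula of (iii), and (ii) is then immediate: non-vanishing of $z(\Phi,\tau,B)$ forces $B$ to appear in $\CA(\nu,\tau)$, while Corollary \ref{th:wallcrossing_k_steps} guarantees that $\c(\Phi_{\rm flip}^A)$ is salient for every $A$ occurring in that list. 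The main care required is the sign bookkeeping: the iterated one-step formula $X(\Phi,\tau_1) = X(\Phi,\tau_2) - (-1)^{|A|} \flip_A X(\Phi_{\rm flip}^A,\tau_2)$ accumulates precisely the $(-1)^{|A|}$ factors that, combined with $\flip_A(w_B) = w_{A \vartriangle B}$, produce the overall $(-1)^{|B|}$ prefactor.
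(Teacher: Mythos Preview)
Your proof is correct and follows essentially the same strategy as the paper's: apply the iterated wall-crossing expansion (Corollary~\ref{th:wallcrossing_k_steps}) at a tope $\nu$ reached from a point $x\in Q_{\rm neg}^B$, and observe that only the terms with $A=B$ survive. The paper phrases this geometrically---evaluating $\varchenko(\Phi,\tau)[V(\Phi,\lambda)]$ at the point $x$ and invoking Proposition~\ref{geometric_flip} so that each summand becomes $[\p_{\rm flip}(\Phi,A,\lambda)](x)=[x\in Q_{\rm neg}^A]=\delta_{A,B}$---while you phrase it algebraically, extracting the $w_B$-coefficient and reducing to $z(\Phi_{\rm flip}^A,\nu,A\vartriangle B)=\delta_{A,B}$ via Proposition~\ref{th:quadrants}; these are the same computation viewed through $\geom$. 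Your explicit tracking of the $(-1)^{|A|}$ factor (which the statement of Corollary~\ref{th:wallcrossing_k_steps} leaves implicit but which is visible in the two-step formula) is a useful clarification.
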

\begin{proof}
We already remarked (i).

For the choice of $x$ as in (iii),
the coefficient $z_B=z(\Phi,\tau,B)$ is the value of $\geom X(\Phi,\tau)[V(\Phi,\lambda)]$ at such a point $x$.
We now apply Corollary \ref{th:wallcrossing_k_steps} and
Proposition \ref{geometric_flip} using the tope $\nu$, where $\lambda$ belongs.

We obtain that the value $z_B$ is the signed sum of the values of
$ \p_{\rm flip}(\Phi,A,\lambda)= [Q_{\rm neg}^A]\cap [V(\Phi,\lambda)]$ at $x$.
By definition, this value is zero if the set of $i$ with $x_i<0$ is different from $B$. Otherwise, is equal to $1$.
\end{proof}
\begin{example}
 Let $\tau_1$  and $\tau_2$ be the adjacent topes and  $A=A(\Phi,\tau_1,\tau_2)$.  If
$\tau_2\subset\c(\Phi_{\rm flip}^A)$, then $z(\Phi,\tau,A)=-(-1)^{|A|} $. An
example of this situation is the standard knapsack, (Example
\ref{standard_simplex}), where there are exactly two topes,
$\R_{>0}$ and $\R_{<0}$. The theorem implies that the Brianchon-Gram
polynomial is $p_1\cdots p_N-(-1)^N q_1\cdots q_N$, as we found
directly.
\end{example}

\begin{example}
If there is no subset $K$ in the sum (\ref{eq:calculcoeff}), or if
there are more than one,  then the coefficient $z(\Phi,\tau,B)$ may
be $0$ although the cone $\c(\Phi_{\rm flip}^B)$ is salient . Let us take
Example \ref{ex:intervalle_3_sommets_1}. The direct
computation gave $X(\Phi,\tau_1)=p_1p_2p_3 -  p_1q_2q_3 + q_1p_2p_3-q_1q_2q_3$. We see that there are no terms corresponding to
$B=\{2\}$ and $B=\{1,3\}$. For $B=\{2\}$, the tope
$\tau_2=\stackrel{\circ}{\c}(\phi_1,-\phi_2)$ is contained in
$\c(\phi_1,-\phi_2,\phi_3)$, hence we take the sequence
$(\tau_1,\tau_2)$. For $K=(1)$ we have $A_{(1)}=\{2,3\}\neq B$, so
there is no $K$ such that $A_K=B$.  For $B=\{1,3\}$, we need a
sequence of three topes, $(\tau_1,\tau_2,\tau_3)$ where now
$\tau_2=\stackrel{\circ}{\c}(\phi_2,\phi_3)$ and
$\tau_3=\stackrel{\circ}{\c}(-\phi_1,\phi_2)\subset \c(\Phi_{\rm flip}^B)$. The
$K$ such that $A_K=\{1,3\}$ are $K=(2)$ and $K=(1,2)$, which indeed
lead to  opposite signs in the sum (\ref{eq:calculcoeff}).
\end{example}

\subsection{Geometric wall-crossing}
By "intersecting" $\geom X(\Phi,\tau)$ with $[V(\Phi,\lambda)]$, we translate the results on $X(\Phi,\tau)$ in geometric terms.
\begin{corollary}[We keep the notations of Theorem \ref{th:wall_crossing}]\label{th:saut_geometrique}
Let $\tau_1$ be a  $\Phi$-tope. For $\lambda$ in the adjacent tope
$\tau_2$, we have the geometric wall-crossing for.mula
\begin{equation}\label{eq:geometric_wall-crossing}
\varchenko(\Phi,\tau_1)\suppresschi{[V(\Phi,\lambda)]}=\suppresschi{[\p(\Phi,
\lambda)]} -(-1)^{|A|}\suppresschi{[\p_{\rm flip}(\Phi,A,\lambda)]}.
\end{equation}
\end{corollary}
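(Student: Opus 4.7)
The plan is to obtain the geometric identity by applying the substitution $\geom = \geom_\emptyset$ to the combinatorial wall-crossing formula of Theorem \ref{th:wall_crossing} and then multiplying by the characteristic function $\suppresschi{[V(\Phi,\lambda)]}$. The combinatorial identity reads
\begin{equation*}
X(\Phi,\tau_1) = X(\Phi,\tau_2) - (-1)^{|A|}\flip_A X(\Phi_{\rm flip}^A,\tau_2),
\end{equation*}
and since both $\geom$ and multiplication by $\suppresschi{[V(\Phi,\lambda)]}$ are linear, we can treat the two terms on the right independently.

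For the first term, I would use that $\lambda \in \tau_2$ together with the basic identity \eqref{eq:Xgeom_and_BGtheorem} (a consequence of the Brianchon-Gram theorem for the polytope $\p(\Phi,\lambda)$), which gives
\begin{equation*}
\geom X(\Phi,\tau_2)\,\suppresschi{[V(\Phi,\lambda)]} = \varchenko(\Phi,\tau_2)\,\suppresschi{[V(\Phi,\lambda)]} = \suppresschi{[\p(\Phi,\lambda)]}.
\end{equation*}
For the second term, the key input is Proposition \ref{geometric_flip}, which applies precisely because Lemma \ref{bases_topes_adjacents} guarantees that the cone $\c(\Phi_{\rm flip}^A)$ is salient, and because $\lambda$ lies in the $\Phi$-tope $\tau_2$ (which is also a $\Phi_{\rm flip}^A$-tope, being a connected component of the complement of the common wall arrangement, which is unchanged under flipping signs). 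Thus
\begin{equation*}
\geom_\emptyset \flip_A X(\Phi_{\rm flip}^A,\tau_2)\,\suppresschi{[V(\Phi,\lambda)]} = \suppresschi{[\p_{\rm flip}(\Phi,A,\lambda)]}.
\end{equation*}

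Assembling these two evaluations and recalling that $\varchenko(\Phi,\tau_1) = \geom X(\Phi,\tau_1)$ by definition yields exactly the stated formula \eqref{eq:geometric_wall-crossing}. The only non-routine point I anticipate is making sure the hypotheses of Proposition \ref{geometric_flip} are verified: namely, that $\tau_2$ is a genuine $\Phi_{\rm flip}^A$-tope contained in (or at least that its points can serve as regular parameters for) the flipped system. This follows because the walls of $\Phi$ and of $\Phi_{\rm flip}^A$ coincide as hyperplanes (flipping signs of some $\phi_i$'s does not change the set of hyperplanes they span), so the partition of $F$ into topes is the same for the two systems, and Lemma \ref{bases_topes_adjacents} ensures $\c(\Phi_{\rm flip}^A)$ is salient so the proposition applies. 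Everything else is a direct substitution.
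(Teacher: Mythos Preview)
Your proof is correct and follows essentially the same route as the paper: apply $\geom$ to the combinatorial wall-crossing identity of Theorem~\ref{th:wall_crossing}, multiply by $\suppresschi{[V(\Phi,\lambda)]}$, identify the first term via \eqref{eq:Xgeom_and_BGtheorem} and the second via Proposition~\ref{geometric_flip}. Your added remark that $\Phi$-topes and $\Phi_{\rm flip}^A$-topes coincide (so that the hypotheses of Proposition~\ref{geometric_flip} are met) is a helpful clarification that the paper leaves implicit.
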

\begin{proof}
We apply the map $\geom$ on  both sides of the
combinatorial wall-crossing formula (\ref{eq:wall-crossing}), then
we multiply by the characteristic function $\suppresschi{[V(\Phi,\lambda)]}$.
We obtain, by definition,
$$
\varchenko(\Phi,\tau_1)[V(\Phi,\lambda)]=[\p(\Phi,\lambda]- (-1)^{|A|}\geom\flip_A X(\Phi_{\rm flip}^A,
\tau_2) \suppresschi{[V(\Phi,\lambda)]},
$$
hence (\ref{eq:geometric_wall-crossing})  by applying the semi-closed
Brianchon-Gram formula, as stated in Proposition
\ref{geometric_flip}, to the tope $\tau_2$.
\end{proof}

\begin{corollary}\label{th:virtual_polytopes}
For any $\lambda\in F$, the function
\begin{equation}\label{eq:virtual_polytopes}
\varchenko(\Phi,\tau)\suppresschi{[V(\Phi,\lambda)]}=\sum_{\{B,\lambda\in
\c(\Phi_{\rm flip}^B)\}}z(\Phi,\tau,B)[\p_{\rm flip}(\Phi,B,\lambda)]
\end{equation}
is a linear
combination with integral coefficients of semi-closed partition
polytopes.
\end{corollary}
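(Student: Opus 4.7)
The result is essentially a repackaging of Theorem \ref{th:salient} together with the definition of $\p_{\rm flip}$, so my plan is to assemble the pieces in the right order and then verify that the restriction on the index set is justified.

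First, I would expand $X(\Phi,\tau)$ in the $w_B$ basis as in equation (\ref{eq:coeffs_combinatorial_BG}),
\[
X(\Phi,\tau)=\sum_B z(\Phi,\tau,B)\,w_B,\qquad w_B=\prod_{j\in B^c}p_j\prod_{i\in B}q_i,
\]
and then apply the substitution map $\geom$ termwise. By definition $\geom(w_B)=\suppresschi{[Q_{\rm neg}^B]}$, so
\[
\varchenko(\Phi,\tau)=\sum_B z(\Phi,\tau,B)\,\suppresschi{[Q_{\rm neg}^B]}.
\]
Multiplying by $\suppresschi{[V(\Phi,\lambda)]}$ and using the defining formula (\ref{eq:flipped_polytope}) for $\p_{\rm flip}(\Phi,B,\lambda)$, which is exactly the set $Q_{\rm neg}^B\cap V(\Phi,\lambda)$, I would obtain the identity
\[
\varchenko(\Phi,\tau)\suppresschi{[V(\Phi,\lambda)]}=\sum_B z(\Phi,\tau,B)\,\suppresschi{[\p_{\rm flip}(\Phi,B,\lambda)]}.
\]

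Next I would trim the sum to match the statement. A nonzero term requires both $z(\Phi,\tau,B)\neq 0$ and $\p_{\rm flip}(\Phi,B,\lambda)\neq\emptyset$. The latter forces $\lambda=\sum_i x_i\phi_i$ for some $x\in Q_{\rm neg}^B$, i.e.\ $\lambda\in\widetilde\c(\Phi_{\rm flip}^B)\subseteq\c(\Phi_{\rm flip}^B)$; so the sum may be restricted to those $B$ with $\lambda\in\c(\Phi_{\rm flip}^B)$ without changing its value, since any omitted term has $\suppresschi{[\p_{\rm flip}(\Phi,B,\lambda)]}=0$.

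Finally I would argue that the resulting combination is genuinely a combination of semi-closed \emph{partition polytopes} (i.e.\ that each nontrivial $\p_{\rm flip}(\Phi,B,\lambda)$ appearing is bounded). This is where Theorem \ref{th:salient}(ii) enters: whenever $z(\Phi,\tau,B)\neq 0$, the cone $\c(\Phi_{\rm flip}^B)$ is salient. By Lemma \ref{lemma:salient } (equivalence of (i) and (iii)), the intersection $\overline{Q_{\rm neg}^B}\cap V(\Phi,\lambda)$ is then bounded for every $\lambda\in F$, hence so is its subset $\p_{\rm flip}(\Phi,B,\lambda)$. Integrality of the coefficients is immediate since $z(\Phi,\tau,B)\in\Z$ by construction. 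I do not anticipate any real obstacle: the only subtlety is remembering that the decomposition in quadrants produced by Theorem \ref{th:salient} automatically enforces salience of $\c(\Phi_{\rm flip}^B)$, which is precisely the boundedness condition needed here.
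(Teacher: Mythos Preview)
Your proposal is correct and follows essentially the same approach the paper intends: the corollary is stated without proof immediately after Theorem \ref{th:salient} and Corollary \ref{th:saut_geometrique}, and your argument---expand $X(\Phi,\tau)$ in the $w_B$ basis, apply $\geom$, intersect with $[V(\Phi,\lambda)]$, then invoke Theorem \ref{th:salient}(ii) for salience and Lemma \ref{lemma:salient } for boundedness---is exactly the assembly of those ingredients the paper has in mind.
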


\subsection{An example}\label{ex:tetragon}
We return to Example \ref{ex:zonotopeB2_suite}, see Fig. \ref{fig:B2topes}, with
$\Phi=(\phi_1,\phi_2,\phi_3=\frac{1}{2}(\phi_2-\phi_1),\phi_4=\frac{1}{2}(\phi_1+\phi_2))$.
\label{exampleB2}
 \begin{figure}[!h]
\begin{center}
  \includegraphics[width=1.5 in]{chamber24.pdf}
  \includegraphics[width=1.5 in]{mur4.pdf}
  \caption{$\lambda_2>\lambda_1>0$, (tope $(\phi_2,\phi_4)$), then $\lambda_2=\lambda_1>0$, (wall $(\phi_4)$).}
  \label{chamber24}
  \end{center}
\end{figure}
\begin{figure}[!h]
\begin{center}
  \includegraphics[width=1.5 in]{chamber41.pdf}
  \includegraphics[width=1.5 in]{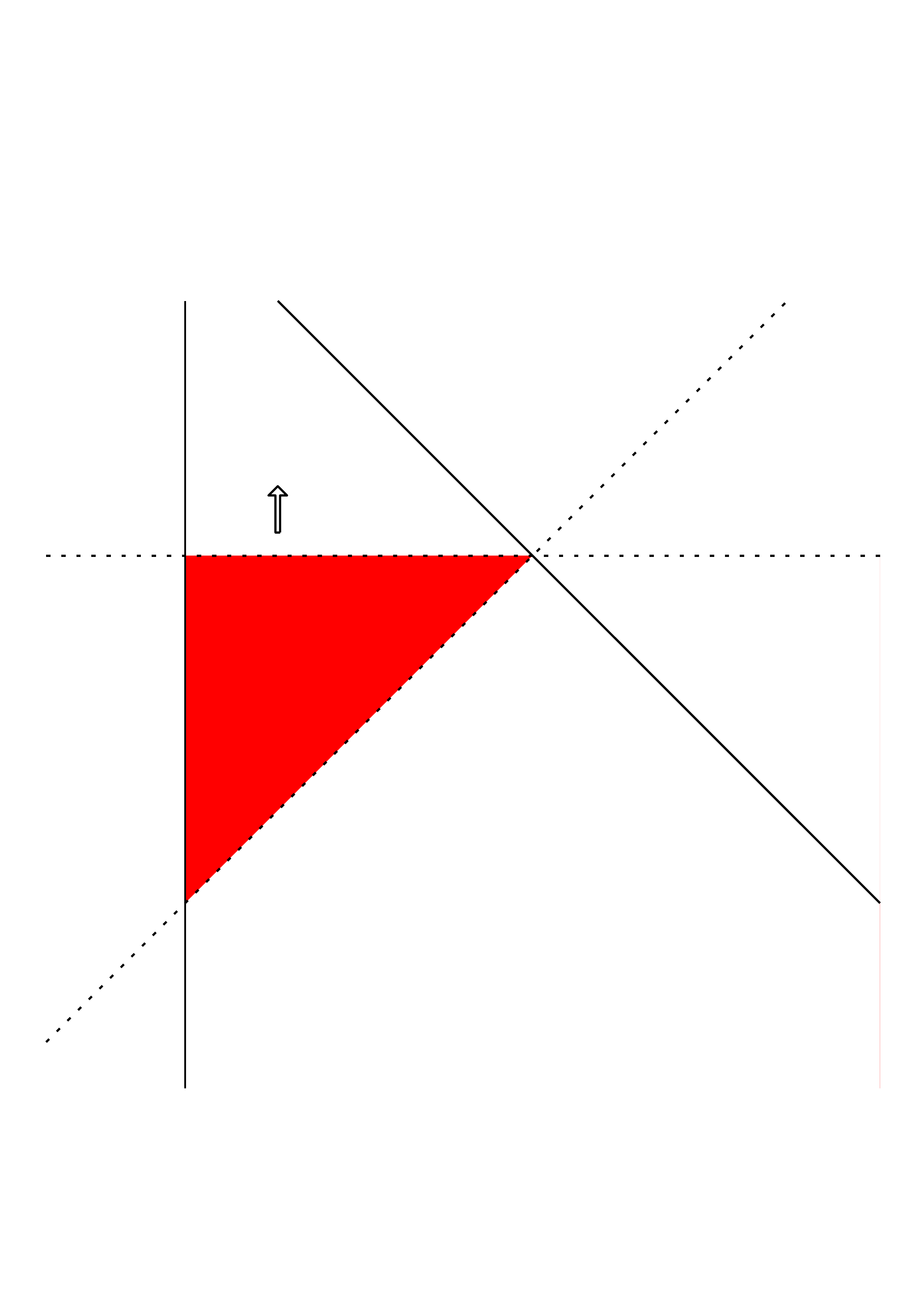}
   \caption{ $\lambda_1>\lambda_2>0$, (tope $(\phi_4,\phi_1)$), then $\lambda_1>0=\lambda_2$, (wall $(\phi_1)$).}
   \label{tope41}
\end{center}
\end{figure}
\begin{figure}[!h]
\begin{center}
  \includegraphics[width=1.5 in]{chamber1minus3.pdf}\includegraphics[width=1.5 in]{murminus3.pdf}
  \caption{$\lambda_1>-\lambda_2>0$, (tope $(\phi_1,-\phi_3)$), then $\lambda_1=-\lambda_2>0$, (wall $(-\phi_3)$).}
  \label{chamber1minus3}
\end{center}
\end{figure}
\begin{figure}[!h]
\begin{center}
  \includegraphics[width=1.5 in]{chamberminus3minus2.pdf} \includegraphics[width=1.5 in]{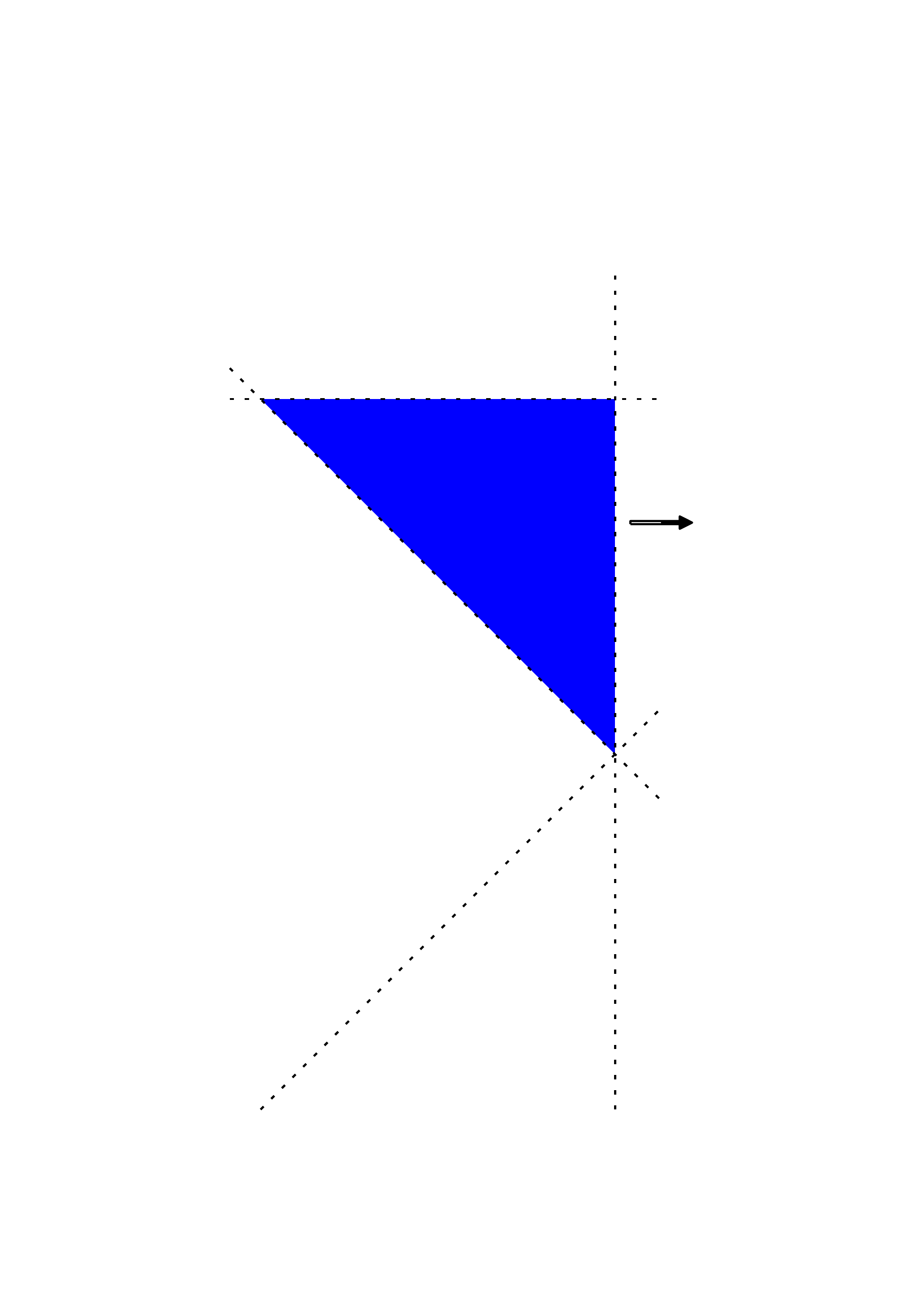}
  \caption{$-\lambda_2>\lambda_1 >0$, (tope $(-\phi_3,-\phi_2)$), then $- \lambda_2>0=\lambda_1$,  (wall $-\phi_2$).}
  \label{chamberminus3minus2}
\end{center}
\end{figure}
\begin{figure}[!h]
\begin{center}
  \includegraphics[width=1.5 in]{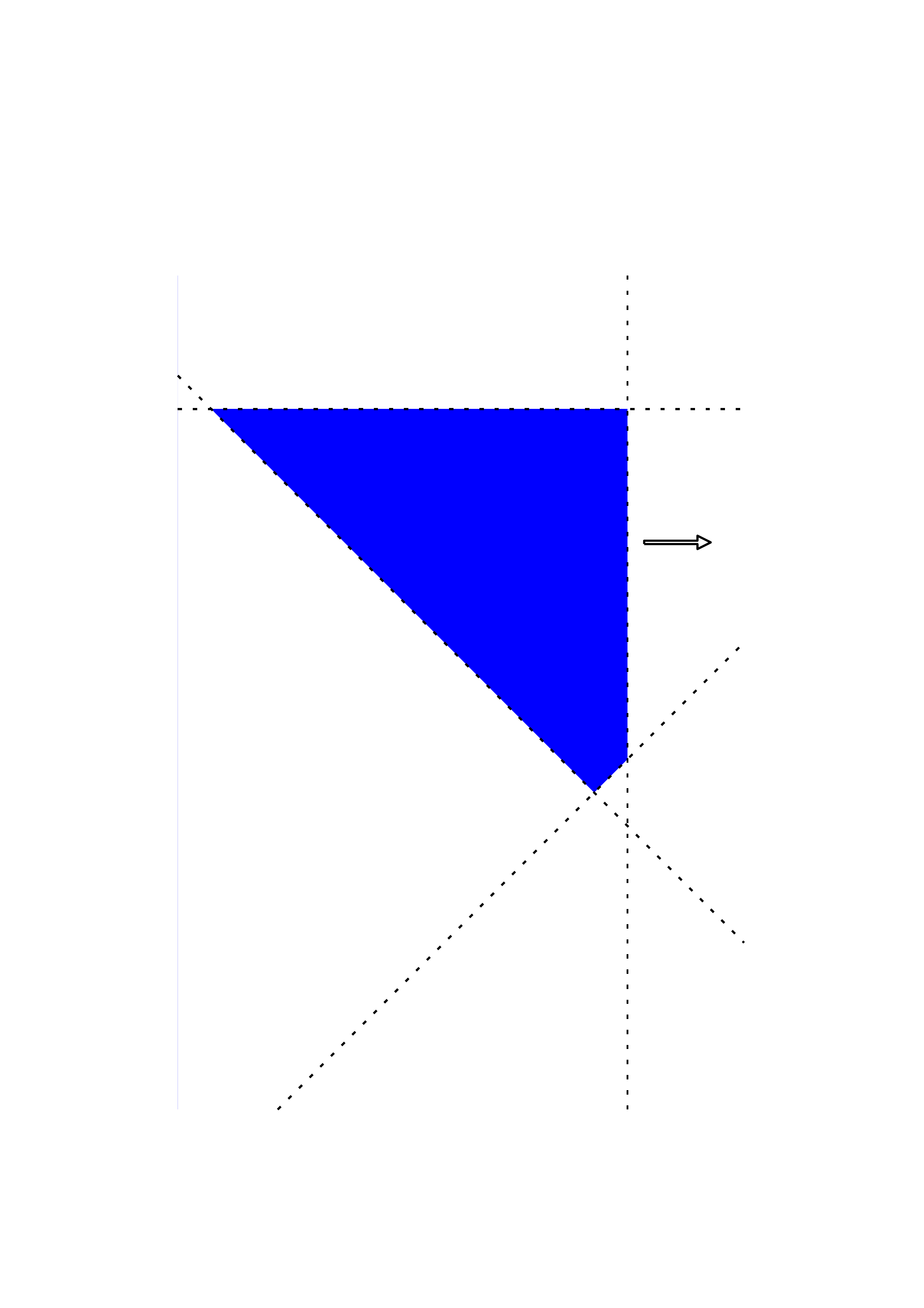}
 \caption{$\lambda_1<\lambda_2<0$,  (tope $(-\phi_2,-\phi_4)$). The polygon is now the interior of the opposite
 of the initial tetragon of Fig. \ref{chamber24},  cf \cite{varchenko}.}
 \label{chamberminus2minus4}
\end{center}
\end{figure}
For $\lambda=\lambda_1\phi_1+\lambda_2\phi_2$, we parametrize  the
$2$-dim subspace $V(\Phi,\lambda)\subset \R^4$ by   $(y_1,y_2)
\mapsto (y_1+\lambda_1, y_2+\lambda_2, y_1-y_2, -(y_1+y_2))$. We
start with $\lambda$  in the  tope $\tau_1$ generated by $\phi_2$
and $\phi_4$, i.e. $\lambda_2>\lambda_1>0$. Then $\p(\lambda)$ corresponds under the
parameterization to the tetragon in Fig. \ref{chamber24},
defined by the inequations
\begin{eqnarray*}
y_1+\lambda_1  &\geq & 0\\
y_2+\lambda_2  &\geq & 0\\
y_1 - y_2 &\geq & 0\\
y_1+y_2 &\leq& 0
\end{eqnarray*}
We describe its analytic continuation, as the parameter $\lambda$
visits the topes, one after the other.
 In the figures, the polytopes which are counted positively  are
coloured in blue, those wich are counted negatively are coloured in
red. Semi-openness is indicated with dashed lines.
 When $\lambda$
moves to the right and reaches the wall generated by $\phi_4$, the
tetragon $\p(\lambda)$ transforms into a triangle
(Fig. \ref{chamber24}). When $\lambda$ enters the adjacent tope $\tau_2$
generated by $(\phi_1,\phi_4)$, i.e. $\lambda_1>\lambda_2>0$,
 the wall-crossing polytope  appears (Fig. \ref{tope41}). It is (with
 sign $-1$) the  semi-closed triangle
\begin{eqnarray*}
y_1+\lambda_1  &\geq & 0\\
y_2+\lambda_2  &< & 0\\
y_1 - y_2 &<& 0\\
y_1+y_2 &\leq& 0, \mbox{  (this condition is redundant).  }
\end{eqnarray*}
 Then $\lambda$ moves downwards towards the wall generated by $\phi_1$.
  The positive closed triangle shrinks
  while the negative semi-closed one increases. When $\lambda$ reaches
  the wall, the closed triangle is reduced to a point (Fig. \ref{tope41}).
  When $\lambda$ enters the tope $(\phi_1,-\phi_3)$,   the negative semi-closed
  triangle deforms into a negative semi-closed quadrileral  (Fig. \ref{chamber1minus3}).
When  $\lambda$ reaches the wall generated by $-\phi_3$ (Fig.
\ref{chamberminus3minus2}), then enters the tope $(-\phi_3,-\phi_2)$, a new positive open triangle appears. Then
 $\lambda$ reaches the wall generated by $-\phi_2$
(Fig. \ref{chamberminus3minus2})  and enters the tope $(-\phi_2,-\phi_4)$
(Fig. \ref{chamberminus2minus4}). The analytic continuation is now a
positive open tetragon opposite to the (closed) initial one.
(This case is pointed out in \cite{varchenko}).

\section{Integrals and discrete sums over a partition
polytope}\label{Integrals_and_discrete_sums} As a  consequence of
the compacity result of Corollary \ref{th:virtual_polytopes},
together with the set theoretic relations of Corollary
\ref{th:continuity-on-closed-tope}, we recover properties of sums
and integrals over partition polytopes which were previously
obtained in \cite{brion-vergne-97-residue} and
\cite{szenes-vergne-2002}, \cite{deconcini-procesi-vergne-dahmen-micchelli-2008},\cite{dahmen-micchelli-1988}.
Moreover, the set-theoretic wall-crossing
formula has obvious implications for sums and integrals. In
particular, when applied to the number of points of a partition
polytope, it implies the wall-crossing formula of
\cite{paradan2004}, Theorem 5.2.  We will explain in more details this last point in Subsection \ref{paradan}.

\subsection{Generating functions of polyhedra and Brion's theorem}

Let $V$ be a real dimensional vector space. We choose a Lebesgue measure $dv$ on $V$.
Let us recall the notion of valuations and of generating functions of cones  (see  the survey \cite{BarviPom}).

Recall that a \emph{valuation} $F$ is a  map from a set of polyhedra
$ \p\subset V $ to a vector space $\mathcal M$ such that whenever the
characteristic functions $\suppresschi{[\p_i]}$ of a family of polyhedra $\p_i$
satisfy a linear relation $\sum_i r_i \suppresschi{[\p_i]}=0$, then the
elements $F(\p_i)$ satisfy the same relation $ \sum_i r_i
F(\p_i)=0$.
Thus any  valuation defined on the set of all polyhedra can be extended to the  ``analytic continuation", which is a signed sum of polytopes.
In particular, the valuation defined on the set of polyhedra by the Euler characteristic    (see   \cite{BarviPom})
is identically equal to $1$ on the ``analytic continuation" as follows from Brianchon-Gram decomposition and Euler relations.

We now study  two other classical instances of valuations.

 There exists a unique valuation  $\p \mapsto
\intpol(\p)$ which associates to every polyhedron
$\p\subseteq V$ a meromorphic function
$\intpol(\p)(\xi)$ on $V^*$, so that the following
properties hold:

\noindent(i)
 If  $\p$ contains a straight line,
  then $\intpol(\p)=0$.

\noindent(ii) If $\xi\in V^*$ is such that $\e^{\la \xi,x\ra}$ is
integrable over $\p$ for the measure $dv$,
  then
$$
\intpol(\p)(\xi)= \int_\p \e^{\la \xi,x\ra} \, dv.
$$
Moreover, for every point $s\in V$, one has
$$
\intpol(s+\p)(\xi) = \e^{\la
\xi,s\ra} \intpol(\p)(\xi).
$$
$I(\p)(\xi)$ is called the \emph{continuous generating
function} of~$\p$.

Assume that $V$ is equipped with a lattice $V_\Z$.

There exists a unique valuation  $\p\mapsto \discretepol(\p)$ which
associates to every rational polyhedron $\p\subseteq V$ a meromorphic function $\discretepol(\p)(\xi)$ on $V^*$,  so that

\noindent(i) if $\p$ contains a straight line, then
$\discretepol(\p)=0$;

\noindent(ii) if $\xi\in V^*$  is such that $\e^{\la \xi,x\ra}$ is
summable over the set
  of lattice points of $\p$, then
$$
\discretepol(\p)(\xi)= \sum_{x\in \, \p\cap V_\Z} \e^{\la \xi,x\ra}.
$$
Moreover, for every point $s\in V_\Z $, one has
$$
\discretepol(s+\p)(\xi) = \e^{\la \xi,s\ra}\discretepol(\p)(\xi).
$$
$\discretepol(\p)(\xi)$ is called the \emph{(discrete) generating
function} of $\p$.

These valuations are easily constructed, either by algebraic methods (see \cite{BarviPom}), or by introducing   the Fourier transforms of discrete or continuous measures associated to the polyhedron $\p$ (see Section \ref{section degenerate}).
Furthermore, there is  an important
property of the generating functions $\discretepol(\p)(\xi)$ and
$\intpol(\p)(\xi)$.
Introduce the space $\CM_{\ell}(V^*)$ of meromorphic functions on $V^*$ which  can be written as the quotient of a
function which is holomorphic near $\xi=0$ by a product of linear
forms.
The functions  $\intpol(\p)(\xi)$  and $\discretepol(\p)(\xi)$
 belong to the space $\CM_{\ell}(V^*)$.
Then a function $f(\xi)\in \CM_{\ell}(V^*)$  has a unique expansion into homogeneous
rational functions
$$
f(\xi)= \sum_{m\geq m_0}f_{[m]}(\xi),
$$
where the summands $f_{[m]}(\xi)$ have degree $m$ as we define now:
if $P$ is a homogeneous polynomial on $V^*$ of degree $p$, and $D$ a
product of $r$ linear forms, then $\frac{P}{D}$ is an element in
$\CM_{\ell}(V^*)$  homogeneous of degree $m=p-r$.

 Let $\p$ be a polytope with set of faces $\CF(\p)$, and  affine tangent cones  $\t_{\aff}(\p,\f)$  at $\f$.
 We obtain from the Brianchon-Gram theorem:

 $$\int_{\p}e^{\ll\xi,v\rr}dv=\sum_{\f} (-1)^{\dim \f}
 I(\t_{\aff}(\p,\f))(\xi).$$

 Furthermore, as the cone   $\t_{\aff}(\p,\f)$ contains a straight line, when the dimension of $\f$ is strictly greater than $0$, this gives the well-known Brion's formula:

 $$\int_{\p}e^{\ll\xi,v\rr}dv=\sum_{s}
 I(s+\c_s)(\xi).$$

 Here $s$ runs through the vertices of $\p$ and $\c_s$ is the tangent cone at $s$.

Similarly, when $V$ is a rational vector space with lattice $V_\Z$, and $\p$ a rational polytope,
we have

\begin{equation}\label{Brionsum}
\sum_{x\in \p\cap V_\Z} e^{\ll\xi,x\rr}=\sum_s S(s+\c_s)(\xi).
\end{equation}

These formulae are at the heart of Varchenko's  ``analytic continuation procedure": we see intuitively that if the vertices of a polytope $\q(b)$ vary ``analytically" with a parameter $b$, the integrals  and discrete sums  will also vary ''analytically". We will state precise results in the next section.

\subsection{Polynomiality and wall-crossing for integrals and sums}

Recall the following definition

\begin{definition}\label{quasi}
If $F$ is equipped with a lattice $\Lambda$, a quasi-polynomial function $f$ on $\Lambda$ is a function such that there exists a sublattice $\Lambda'\subset \Lambda$ so that, for any $\lambda_0\in \Lambda$, the function $\lambda'\to f(\lambda_0+\lambda')$  is given by the restriction to $\Lambda'$ of a polynomial function $f_{\lambda_0}$ on $F$.
\end{definition}

\begin{theorem}\label{th:polynom_integral_and_discrete}
Let $\Phi=(\phi_j)_{1\leq j\leq N} $ be a sequence of non zero elements
of a vector space $F$, generating F, and  spanning a salient cone.
Let $\tau\subset F$ be a $\Phi$-tope such that
$\tau$ is contained in the cone $\c(\Phi)$ generated by $\Phi$. For
$\lambda\in F$, let $V(\Phi,\lambda)$ be the affine subspace of
$\R^N$ defined by $\sum_{i=1}^N x_i \phi_i= \lambda$. Let
$$
\varchenko(\Phi,\tau)=\sum_{I\in \CG(\Phi,\tau)}(-1)^{|I|-\dim
F}\prod_{i\in I^c}\suppresschi{[x_i\geq 0]},
$$
 where  $\CG(\Phi,\tau)$ is the set of
 $I\subseteq \{1,\dots, N\}$ such that $\{\phi_i,i\in I\}$ generates $F$
 and such that $\tau$ is contained in the
cone generated by $\{\phi_i,i\in I\}$.

Let $h(x)$ be a polynomial function on $\R^N$.
 Fix a Lebesgue measure on the subspace $V$ and
let $dm_{\Phi} (x)$ be the corresponding Lebesgue measure on
$V(\Phi,\lambda)$. Define
\begin{equation}\label{def:polynom_integral}
\intpol(\Phi,\tau,h)(\lambda)= \int_{V(\Phi,\lambda)}
\varchenko(\Phi,\tau)(x) h(x)
 dm_{\Phi}(x).
\end{equation}
 In the case where $F$ is a rational space with
lattice $\lattice$ and  that the  $\phi_i$ are lattice vectors, define \begin{equation}\label{def:polynom_discrete_sum}
\discretepol(\Phi,\tau,h)(\lambda)= \sum_{x\in V(\Phi,\lambda)\cap
\Z^N} \varchenko(\Phi,\tau)(x) h(x).
\end{equation}
Then

 \noindent (i) $\lambda\mapsto \intpol(\Phi,\tau,h)(\lambda)$ is a
polynomial function on $F$.

\noindent (ii) $\lambda\mapsto \discretepol(\Phi,\tau,h)(\lambda)$
is a quasi-polynomial function on the lattice $\lattice\subset F $.

 \noindent (iii) If  $\lambda$ belongs to the closure  of the tope
$\tau$, we have
\begin{equation}\label{eq:polynom_integral}
 \intpol(\Phi,\tau,h)(\lambda)=\int_{\p(\Phi,\lambda)}h(x)
 dm_{\Phi}(x).
\end{equation}
  Let $\b(\Phi)$ be the zonotope generated by $\Phi$.
If $\lambda \in (\tau-\b(\Phi))\cap \lattice$, we have
\begin{equation}\label{eq:polynom_discrete_sum}
\discretepol(\Phi,\tau,h)(\lambda)= \sum_{x\in \p(\Phi,\lambda)\cap
\Z^N} h(x).
\end{equation}

 \noindent (iv)  Furthermore, we have the following wall-crossing formulas (with the
notations of Theorem \ref{th:wall_crossing}).
   For
$\lambda\in \tau_2$, we have
\begin{equation}\label{eq:wall_crossing_integral}
\intpol(\Phi,\tau_1,h)(\lambda)=\int_{\p(\Phi,\lambda)} h(x)
 dm_{\Phi}(x)-(-1)^{|A|}\int_{\p_{\rm flip}(\Phi,A,\lambda)} h(x)
 dm_{\Phi}(x).
\end{equation}
\begin{equation}\label{eq:wall_crossingsum}
\discretepol(\Phi,\tau_1,h)(\lambda)=\sum_{x\in \p(\Phi,\lambda)\cap
\Z^N} h(x)-(-1)^{|A|}\sum_{x\in \p_{\rm flip}(\Phi,A,\lambda) \cap \Z^N}
h(x).
\end{equation}
\end{theorem}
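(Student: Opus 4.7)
The plan is to reduce everything to the Lawrence--Varchenko decomposition of Theorem \ref{th:polarized-sum}. Fix a linear form $\beta$ on $\R^N$ regular for $\Phi$. Then for every $\lambda\in F$,
\[
\varchenko(\Phi,\tau)\suppresschi{[V(\Phi,\lambda)]}=\sum_{K\in \CB(\Phi,\tau)}(-1)^{|{K^c_\beta}^-|}\suppresschi{[s_K(\Phi,\lambda)+\a_0(K,\beta)]},
\]
where each $s_K(\Phi,\lambda)$ depends linearly on $\lambda$ and $\a_0(K,\beta)$ is a fixed semi-open simplicial cone in $V$ on which $\beta$ is positive. By Corollary \ref{co:bounded}, this signed sum has bounded support in $V(\Phi,\lambda)$.

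For (i), I would apply the continuous generating function valuation $\intpol(\cdot)(\xi)$ term by term. Since $\intpol$ is a valuation, this yields, as meromorphic functions of $\xi$,
\[
\int_{V(\Phi,\lambda)}\varchenko(\Phi,\tau)(x)\,e^{\la\xi,x\ra}\,dm_\Phi(x)=\sum_{K\in \CB(\Phi,\tau)}(-1)^{|{K^c_\beta}^-|}e^{\la\xi,s_K(\Phi,\lambda)\ra}\,\intpol(\a_0(K,\beta))(\xi).
\]
The left side is entire in $\xi$ because the integrand has bounded support, so the individual poles on the right cancel in the sum. For a polynomial $h$, $\intpol(\Phi,\tau,h)(\lambda)$ is obtained by applying $h(\partial_\xi)|_{\xi=0}$ to either side. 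On the right, $e^{\la\xi,s_K(\Phi,\lambda)\ra}$ is a formal power series in $\xi$ whose coefficients are polynomials in $\lambda$, while $\intpol(\a_0(K,\beta))(\xi)$ is a rational function of $\xi$ independent of $\lambda$. After the pole cancellations, the coefficient of each monomial in $\xi$ is a polynomial in $\lambda$, proving~(i).

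For (ii), I would run the same argument with the discrete generating function $\discretepol$ in place of $\intpol$. The new issue is that $\discretepol(s+\a_0(K,\beta))(\xi)$ depends on $s$ through the exponential prefactor $e^{\la\xi,s\ra}$ and through the class of $s$ modulo $\Z^N$. Let $D$ be the lcm of the integers $|\det\Phi_K|$ for $K\in \CB(\Phi)$; for any $\lambda\in\lattice$ and any $K$, $s_K(\Phi,\lambda)\in \frac{1}{D}\Z^N$. Choose a sublattice $\lattice'\subset\lattice$ on which all the linear maps $\lambda\mapsto s_K(\Phi,\lambda)$ take values in $\Z^N$, and a finite system of representatives of $\lattice/\lattice'$. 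Writing $\lambda=\lambda_0+\lambda'$, the lattice class of $s_K(\Phi,\lambda)$ depends only on $\lambda_0$, and the dependence on $\lambda'$ is carried by $e^{\la\xi,s_K(\Phi,\lambda')\ra}$ exactly as in the continuous case. The same pole-cancellation and Taylor-coefficient argument then gives a polynomial in $\lambda'$, proving the quasi-polynomiality asserted in~(ii).

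Parts (iii) and (iv) are immediate consequences of earlier set-theoretic results. Corollary \ref{th:continuity-on-closed-tope} gives $\varchenko(\Phi,\tau)\suppresschi{[V(\Phi,\lambda)]}=\suppresschi{[\p(\Phi,\lambda)]}$ for $\lambda\in\overline{\tau}$, and the analogous equality on $\Z^N$ for $\lambda\in(\tau-\b(\Phi))\cap\lattice$; integrating or summing $h$ against both sides yields (\ref{eq:polynom_integral}) and (\ref{eq:polynom_discrete_sum}). For (iv), the geometric wall-crossing formula of Corollary \ref{th:saut_geometrique} asserts
\[
\varchenko(\Phi,\tau_1)\suppresschi{[V(\Phi,\lambda)]}=\suppresschi{[\p(\Phi,\lambda)]}-(-1)^{|A|}\suppresschi{[\p_{\rm flip}(\Phi,A,\lambda)]}
\]
for $\lambda\in\tau_2$, and integrating or summing $h$ against both sides gives (\ref{eq:wall_crossing_integral}) and (\ref{eq:wall_crossingsum}). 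The main obstacle I anticipate is the pole-cancellation step combined with the identification of a suitable sublattice $\lattice'$ in~(ii); the remaining parts are direct consequences of the geometric and combinatorial theorems already established.
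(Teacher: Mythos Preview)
Your proof is correct and follows essentially the same route as the paper: reduce to exponential weights, write $\varchenko(\Phi,\tau)[V(\Phi,\lambda)]$ as a sum over $K\in\CB(\Phi,\tau)$ of shifted cones $s_K(\Phi,\lambda)+\text{(fixed cone)}$, apply the valuations $I$ and $S$ termwise using their translation property, and handle the discrete case via a sublattice $\Lambda'$ on which all $s_K(\Phi,\cdot)$ are integral; parts (iii) and (iv) come directly from Corollaries \ref{th:continuity-on-closed-tope} and \ref{th:saut_geometrique}. The only cosmetic differences are that the paper uses the unpolarized tangent cones $\a_0(K)$ (relying on the valuations vanishing on the flat cones indexed by $\CG\setminus\CB$) rather than your polarized $\a_0(K,\beta)$, and it extracts the degree-$M$ homogeneous component in $\CM_\ell(V^*)$ termwise rather than phrasing it as $h(\partial_\xi)|_{\xi=0}$, which makes the ``pole cancellation'' step you flag completely explicit.
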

\begin{proof}
(iii)  follows immediately from Corollary
\ref{th:continuity-on-closed-tope} and (iv) from the  wall crossing
formulas of Corollary \ref{th:saut_geometrique}, together with
Corollary \ref{th:continuity-on-closed-tope}.

The proof  of the polynomiality in (i) and (ii) relies on  the
properties of generating functions, as we explain in \cite{BBDKV-2010} for the
weighted Ehrhart theory.

 To begin
with, observe that it is enough to prove the theorem in the case
where the weight $h(x)$ is a power of a linear form
$$
h(x)=\frac{\langle\xi,x\rangle^M}{M!},
$$
for $\xi\in (\R^N)^*$.    This is the term of $\xi$-degree $M$ of
the exponential $ e^{\langle\xi,x\rangle}$.  Thus, we consider the
 functions of $\xi\in (\R^N)^*$
\begin{equation}\label{def:generating_integral}
\intpol(\Phi,\tau)(\xi,\lambda)= \int_{V(\Phi,\lambda)}
\varchenko(\Phi,\tau)(x) e^{\langle\xi,x\rangle}dm_{\Phi}(x).
\end{equation}
\begin{equation}\label{def:generating_discrete_sum}
\discretepol(\Phi,\tau)(\xi,\lambda)= \sum_{x\in V(\Phi,\lambda)\cap
\Z^N} \varchenko(\Phi,\tau)(x) e^{\langle\xi,x\rangle}.
\end{equation}
As $\varchenko(\Phi,\tau)(x)\suppresschi{[V(\Phi,\lambda)]}(x)$ has bounded
support by Corollary \ref{co:bounded},
(\ref{def:generating_integral}) and
(\ref{def:generating_discrete_sum}) are holomorphic functions of
$\xi$. We recover $\intpol(\Phi,\tau,h)(\lambda)$ and
$\discretepol(\Phi,\tau,h)(\lambda)$  by taking their term of
$\xi$-degree $M$.

However the dependance on $\lambda$ can be analyzed by looking at each
summand in
\begin{equation}\label{repeat}
\varchenko(\Phi,\tau)\cap \suppresschi{[V(\Phi,\lambda)]}=\sum_{K\in \CG(\Phi,\tau)} (-1)^{|K|-\dim F} \suppresschi{[\t_K(\Phi,\lambda)]}.
\end{equation}

Indeed, it is immediate to extend the valuation $I(\p)$ defined in the preceding section  to a valuation $I(\p,\lambda)$ defined on polyhedrons contained in the affine space $V(\Phi,\lambda).$

Namely, there exists a  unique valuation
$\intpol(\p,\lambda)$  associating to every polyhedron
$\p\subseteq V(\Phi,\lambda)$ a meromorphic function
$\intpol(\p,\lambda)(\xi)$ on $\C^N$, so that the following
properties hold:

\noindent(i)
 If  $\p$ contains a straight line,
  then $\intpol(\p,\lambda)=0$.

   \noindent(ii) If $\xi\in \C^N$ is such that $\e^{\la \xi,x\ra}$ is
integrable over $\p$ for the measure $dm_{\Phi}$,
  then
$$
\intpol(\p,\lambda)(\xi)= \int_\p \e^{\la \xi,x\ra} \, dm_\Phi(x).$$

Moreover, for every point $s\in \R^N$, one has
$$
\intpol(s+\p,\lambda+\sum_{i=1}^N s_i\phi_i)(\xi) = \e^{\la
\xi,s\ra} \intpol(\p, \lambda)(\xi).
$$

Similarly, if $F$ is a space with a lattice $\Lambda$, and the elements $\phi_i$ belongs to $\Lambda$, then, for $\lambda\in \Lambda$, there  exists a unique valuation  $\p\mapsto \discretepol(\p,\lambda)$
associating to  any $\lambda\in \Lambda$ and every rational polyhedron $\p\subseteq V(\Phi,\lambda)$ a meromorphic
function $\discretepol(\p,\lambda)(\xi)$ on $\C^N$,  so that

\noindent(i) if $\p$ contains a straight line, then
$\discretepol(\p,\lambda)=0$;

\noindent(ii) if $\xi\in \C^N$  is such that $\e^{\la \xi,x\ra}$ is
summable over the set $\p\cap \Z^N$, then
$$
\discretepol(\p,\lambda)(\xi)= \sum_{x\in \, \p\cap \Z^N} \e^{\la \xi,x\ra}.
$$
Moreover, for every point $s\in\Z^N $, one has
$$
\discretepol(s+\p,\lambda+\sum_i s_i\phi_i)(\xi) = \e^{\la \xi,s\ra}\discretepol(\p,\lambda)(\xi).
$$

Look at Equation (\ref{repeat}).
The polyhedron $\t_K(\Phi,\lambda)$ contains a straight line as soon if $K\in \CG(\Phi,\tau)$  is not a basic subset.
Thus,  in terms of the valuations
 $\intpol(\p,\lambda)$, $\discretepol(\p,\lambda)$, we have
$$
\intpol(\Phi,\tau)(\xi,\lambda)=\sum_{K\in
\CB(\Phi,\tau)}\intpol(\t_K(\Phi,\lambda),\lambda)(\xi),
$$
and
$$
\discretepol(\Phi,\tau)(\xi,\lambda)=\sum_{K\in
\CB(\Phi,\tau)}\discretepol(\t_K(\Phi,\lambda),\lambda)(\xi).
$$
Each of these equations  expresses a holomorphic function as a sum
of meromorphic ones  whose poles cancel out.  Furthermore, we can
recover the term of $\xi$-degree $M$ by taking the homogeneous de degree in $\xi$ in each of these functions of $\xi$.

Regarding the dependance on $\lambda$, we have already observed  the following crucial fact:
the cone $\t_K(\Phi,\lambda)$ is the shift $s_K(\Phi,\lambda)+\a_0(K) $
of the \textbf{fixed cone}
$\a_0(K) $
 by the vertex $s_K(\Phi,\lambda) $ depending linearly of $\lambda$.

Let us first study the integral.
Using the translation property of the valuation $\intpol(\p,\lambda)$, we can express $\intpol(\p,\lambda)$  in function of the valuation $\intpol(\p)$ defined on polyhedrons contained in the fixed space $V$.
We  then have
\begin{equation*}\label{eq:generating_integral-coneK-1}
\intpol(\t_K(\Phi,\lambda),\lambda)(\xi)=e^{\langle\xi,s_K(\Phi,\lambda)\rangle}\intpol(\a_0(K))(\xi).
\end{equation*}
Only the first factor $e^{\langle\xi,s_K(\Phi,\lambda)\rangle}$ depends
on $\lambda$.
 Actually, it is easy to see that
$\intpol(\a_0(K))(\xi)$ is homogeneous of degree $-d$. Hence,
the term of $\xi$-degree $M$ of $\intpol(\t_K(\Phi,\lambda),\lambda)(\xi)$
is given by
$$
\intpol(\t_K(\Phi,\lambda),\lambda)(\xi)_{[M]}=\frac{\langle\xi,s_K(\Phi,\lambda)\rangle^{M+d}}{(M+d)!}
\intpol(\a_0(K))(\xi)_{[-d]}.
$$
Thus we have, for $h(x)=\frac{\langle\xi,x\rangle^M}{M!}$,
\begin{equation*}\label{eq:generating_integral-coneK-2}
\intpol(\Phi,\tau,h)(\lambda)=
\sum_{K\in \mathcal B(\Phi,\tau)}\frac{\langle\xi,s_K(\Phi,\lambda)\rangle^{M+d}}{(M+d)!}
\intpol(\a_0(K))(\xi)_{[-d]}.
\end{equation*}
The right hand side of this formula is a polynomial function of
$\lambda$ of degree $M+d$, with coefficients which are polynomial functions of $\xi$
of degree $M$, (although each $K$ summand has poles). Thus we have
proved (i).

Let us now study the discrete sum
$$\discretepol(\Phi,\tau)(\xi,\lambda)=\sum_{K\in
\CB(\Phi,\tau)}\discretepol(\t_K(\Phi,\lambda),\lambda)(\xi).
$$

 Consider a sublattice $\Lambda'$ of $\Lambda$ such that all elements $s_K(\Phi,\lambda')$ have integral coefficients for $\lambda'\in \Lambda'$ and all $K\in \mathcal B(\Phi,\tau)$.
If $D$ is the least common multiple of all determinants of the $\Phi$-basic subsets $I$ in $\mathcal B(\Phi)$, we can choose
$\Lambda'=D\Lambda$.

Thus, if $\lambda=\lambda_0+\lambda'$, with $\lambda_0\in \Lambda$,$\lambda'\in \Lambda'$, we obtain
$$\discretepol(\t_K(\Phi,\lambda_0+\lambda'),\lambda_0+\lambda')(\xi)=
\e^{\la\xi,s_K(\Phi,\lambda')\ra}\discretepol(\t_K(\Phi,\lambda_0),\lambda_0)(\xi).
$$
Indeed $\t_K(\Phi,\lambda_0+\lambda')=s_K(\Phi,\lambda')+\t_K(\Phi,\lambda_0)$.

Here again the dependance in $\lambda'$ is only through the factor $\e^{\la\xi,s_K(\Phi,\lambda')\ra}$ and $s_K(\Phi,\lambda')$ depends linearly on $\lambda'$.

If $f_{\lambda_0}(K,\xi)=\discretepol(\t_K(\Phi,\lambda_0),\lambda_0)(\xi) $ ,
a meromorphic function of $\xi$ of degree greater or equal to $-d$,
we obtain:
$$\discretepol(\t_K(\Phi,\lambda_0+\lambda'),\lambda_0+\lambda')(\xi)_{[M]}=\sum_{k=0}^{M+d}
\frac{\la\xi,s_K(\Phi,\lambda')\ra ^k}{k!} f_{\lambda_0}(K,\xi)_{[M-k]}.
$$
This is a polynomial function of $\lambda'$ of degree $M+d$.

Adding up the contributions, we see that we obtain that
  $$\lambda'\to \discretepol(\Phi,\tau)(\xi,\lambda_0+\lambda')$$
is a polynomial function of $\lambda'$ and $\xi$.

\end{proof}

\bigskip

\begin{remark}
Consider Equation  (\ref{eq:virtual_polytopes}):
\begin{equation}\label{bis}
\varchenko(\Phi,\tau)\suppresschi{[V(\Phi,\lambda)]}=\sum_{\{B,\lambda\in
\c(\Phi_{\rm flip}^B)\}}z(\Phi,\tau,B)\suppresschi{[Q_{\rm neg}^B]}\suppresschi{[V(\Phi,\lambda)]}.
\end{equation}

Consider the case where the elements $\phi_i$ are in a lattice $\Lambda$ of $F$. Summing up the function $h=1$ over $V(\Phi,\lambda)\cap \Z^N$ on both sides, we obtain an expression for the quasi polynomial function $\discretepol(\Phi,\tau,h)(\lambda)$ in function of the partition functions associated to the flipped systems $\Phi_{\rm flip}^B$.

The functions $\discretepol(\Phi,\tau,h)(\lambda)$  are elements of the Dahmen-Micchelli space associated to $\Phi$ and $\Lambda$. It was proved in \cite{deconcini-procesi-vergne-dahmen-micchelli-2008} that any Dahmen-Micchelli quasi polynomial can be expressed as a linear combination of partition functions associated to flipped systems $\Phi_{\rm flip}^B$. The equation
(\ref{bis}) can be considered as a ``set-theoretic" generalization of this theorem.

\end{remark}

\subsection{Paradan's convolution wall-crossing formulas}\label{paradan}
We assume that $F$ is equipped with a lattice $\Lambda$.

The convolution of two functions $f_1,f_2$  (satisfying adequate support conditions) on $\Lambda$ is defined  by
$$(f_1*f_2)(\mu)=\sum_{\lambda_1+\lambda_2=\mu}f_1(\lambda_1) f_2(\lambda_2).$$

If $\mu\in \Lambda$,  we write $\delta_\mu$ for the function on $\Lambda$ such that $f(\lambda)=\delta_\mu^{\lambda}$.

 Let $h$ be a polynomial function on $\R^N$, and consider
 $$E(\Phi,h)(\lambda)=\sum_{x\in \p(\Phi,\lambda)}h(x).$$

When $h$ is the constant function $1$, then
$$
k(\Phi)(\lambda)=E(\Phi,1)(\lambda)={\rm Card}(\p(\Phi,\lambda)\cap \Z^N)
$$
is  the partition function associated to the sequence $\Phi$.
The function $k(\Phi)(\lambda)$ is the  convolution product $f_{\phi_1}*\cdots*f_{\phi_N}$, where, for $\phi\in F$,
$$
f_\phi:=\sum_{n= 0}^{\infty}\delta_{n\phi}.
$$
Indeed, by definition $k(\Phi)(\lambda)$
is the number of solutions in integers $n_i\geq 0$ of the equation $\sum_i n_i\phi_i=\lambda$.

The case of a  polynomial function $h$ can be treated similarly.
Assume $h$ is a product  $h(x_1,x_2,\ldots, x_N)=\prod_{i=1}^Nh_i(x_i)$
where $h_i$ are polynomial functions on $\R$.
For $h$ a  polynomial function on $\R$, and $\phi$ a non zero element in $\Lambda$, introduce
$$
f_\Phi^h=\sum_{n=0}^{\infty}h(n)\delta_{n\phi}.
$$
Then we see that
$$
E(\Phi,h)=f_{\phi_1}^{h_1}*\cdots*f_{\phi_N}^{h_N}.
$$
With the notations of Theorem \ref{th:polynom_integral_and_discrete},
for each tope $\tau$, the function
  $\discretepol(\Phi,\tau,h)(\lambda)$ is a quasi-polynomial function  on the lattice $\Lambda$
such that $E(\Phi,h)(\lambda)=\discretepol(\Phi,h,\tau)(\lambda)$ for $\lambda\in (\tau-\b(\Phi))\cap \Lambda$.

Let $\tau_1,\tau_2$ be two adjacent topes separated by a wall $H$.
Let $\tau_{12}$ be the unique tope of $\Phi\cap H$
such that $\overline{\tau_1}\cap\overline{\tau_2}\subset
\overline{\tau_{12}}$.
 Paradan's formula is a formula for $\discretepol(\Phi,h,\tau_1)-\discretepol(\Phi,h,\tau_2)$
 when $\tau_1,\tau_2$ are adjacent topes in terms
 of the convolution of the  quasi-polynomial function
 $\discretepol(\Phi\cap H,h,\tau_{1,2})$ on $\Lambda\cap H$ with some the functions $f_\Phi^h$.

Before stating the formula, we
note one property of the function $\discretepol(\Phi,h,\tau)$.

Assume that $H$ is a face of the cone $\c(\Phi)$ and that $\tau$  is a tope with one of its wall equal to $H$.
 Let $\tau_H$ be the unique tope of $\Phi\cap H$ so that
$\overline\tau\cap H$ is contained in $\tau_H$.
Let $\discretepol(\Phi\cap H,h,\tau_H)$ be the quasi-polynomial function on $\Lambda\cap H$ associated to this data.

Let us denote the subsequence of elements $\phi_i$  not in $H$  by  $\Phi\setminus H=(\phi_1,\ldots, \phi_M)$.
Then if $n_1,n_2,\ldots, n_M$ are non negative integers, and $\lambda\in \Lambda$,  there are only a finite number of $n_i$ such that $\lambda-\sum_{i=1}^M n_i \phi_i$ belongs to $H$, as  the elements $\phi_i$ are all on one side of $H$.

Let $H^{\geq 0}$ be the closed half space delimited by $H$ and containing $\tau$.
\begin{proposition}\label{prop:covol}
For $\lambda\in H^{\geq 0}$,
$\discretepol(\Phi,h,\tau)(\lambda)$ is equal to
$$
\sum_{n_i\geq 0,\lambda-\sum_i n_i\phi_i\in H}   (h_1(n_1)\cdots h_M(n_M)) \discretepol(\Phi\cap H,h,\tau_H)(\lambda-\sum_{i=1}^M n_i\phi_i).
$$
\end{proposition}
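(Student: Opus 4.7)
The plan is to establish the identity first on a deep sub-cone of $\tau$ by a direct convolution computation, and then extend it to all $\lambda\in H^{\geq 0}\cap \lattice$ via quasi-polynomial uniqueness. Writing $h(x)=\prod_{i=1}^N h_i(x_i)$ and grouping the factors of $E(\Phi,h)=f^{h_1}_{\phi_1}*\cdots *f^{h_N}_{\phi_N}$ according to whether $\phi_i\in H$, one obtains
\begin{equation*}
E(\Phi,h)(\lambda)=\sum_{n\in\Z^M_{\geq 0},\; \mu_n\in H\cap \lattice}  h_1(n_1)\cdots h_M(n_M)\,E(\Phi\cap H, h)(\mu_n),
\end{equation*}
where $\mu_n:=\lambda-\sum_{i=1}^M n_i\phi_i$ and the sum is finite because all $\phi_i$ with $i\leq M$ lie on the strict positive side of $H$. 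This is exactly the right-hand side of the proposition except that $\discretepol(\Phi\cap H,h,\tau_H)(\mu_n)$ is replaced by $E(\Phi\cap H,h)(\mu_n)$.

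The first key step is to exhibit an open sub-cone $\tau^{*}\subset \tau$ such that every $\lambda\in \tau^{*}\cap \lattice$ satisfies simultaneously: (a) $\lambda\in \tau-\b(\Phi)$, and (b) for every $n\in \Z^M_{\geq 0}$ with $\mu_n\in H\cap \lattice$, one has $\mu_n\in \tau_H-\b(\Phi\cap H)$. Letting $\alpha\in F^{*}$ be a linear form defining $H$ with $\alpha>0$ on $\tau$, so that $\alpha(\phi_i)>0$ for $i\leq M$, the identity $\sum_i n_i\alpha(\phi_i)=\alpha(\lambda)$ forces $n_i\leq \alpha(\lambda)/\alpha(\phi_i)$, so only finitely many $n$ contribute for each $\lambda$; choosing $\tau^{*}$ as a deep translate of $\tau$ far enough from each wall then ensures both (a) and (b). On $\tau^{*}\cap\lattice$, Corollary \ref{th:continuity-on-closed-tope} applied to $(\Phi,\tau)$ gives $\discretepol(\Phi,h,\tau)(\lambda)=E(\Phi,h)(\lambda)$, while the same corollary applied term-by-term to $(\Phi\cap H,\tau_H)$ permits replacing $E(\Phi\cap H,h)(\mu_n)$ by $\discretepol(\Phi\cap H,h,\tau_H)(\mu_n)$, yielding the claimed identity on $\tau^{*}\cap \lattice$.

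To conclude, I would show that both sides are quasi-polynomials on $H^{\geq 0}\cap \lattice$ and invoke uniqueness of quasi-polynomial extension. The left-hand side is quasi-polynomial on $\lattice$ by Theorem \ref{th:polynom_integral_and_discrete}. For the right-hand side, writing $\discretepol(\Phi\cap H,h,\tau_H)$ as a polynomial on each coset of its period sublattice and using that the $h_i$ are polynomials reduces it to a finite linear combination of expressions $\sum_{n\geq 0,\,\sum n_i\alpha(\phi_i)=\alpha(\lambda)} q(n,\lambda)$ with $q$ polynomial and all $\alpha(\phi_i)>0$; such weighted one-dimensional partition sums are classically quasi-polynomial in $\lambda\in H^{\geq 0}\cap\lattice$. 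Since $\tau^{*}\cap \lattice$ is Zariski-dense in every coset of any finite-index sublattice of $\lattice$, agreement on $\tau^{*}\cap\lattice$ forces agreement on all of $H^{\geq 0}\cap \lattice$. The main obstacle I anticipate is the careful verification of quasi-polynomiality of the right-hand side: it must be done by splitting over the finitely many period cosets of $\discretepol(\Phi\cap H,h,\tau_H)$ and running an Ehrhart-type argument uniformly in $\lambda$.
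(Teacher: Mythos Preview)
Your proposal is correct and follows essentially the same strategy as the paper: prove the identity on a sub-cone of $\tau$ where both sides reduce to $E(\Phi,h)$ via the convolution factorization, then extend by uniqueness of quasi-polynomials agreeing on a cone with nonempty interior. The paper's sub-cone is $\c=\{\lambda\in\tau : (\lambda-\sum_{i=1}^M\R_{\geq 0}\phi_i)\cap H\subset\tau_H\}$, which is exactly your condition (b) phrased homogeneously; note that ``a deep translate of $\tau$'' is not quite the right description, since you want a sub\emph{cone}, but your intent is clear and matches the paper. The one substantive difference is that the paper dispatches the quasi-polynomiality of the right-hand side on $H^{\geq 0}\cap\Lambda$ by citing \cite{MR2573189} (convolution of a quasi-polynomial on $H\cap\Lambda$ with the $f_{\phi_i}^{h_i}$ is quasi-polynomial on the appropriate half-space), whereas you sketch a direct Ehrhart-type argument; your anticipated ``main obstacle'' is thus exactly the point the paper outsources.
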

In other words, on $H^{\geq 0}\cap \Lambda$,
$$\discretepol(\Phi,h,\tau)=\discretepol(\Phi\cap H,h,\tau_H)*f_{\phi_1}^{h_1}*\cdots*f_{\phi_M}^{h_M}.$$
\begin{proof}
As follows from \cite{MR2573189},  the right hand side, being
the convolution of a quasi-polynomial function on the lattice
$\Lambda\cap H$ with products  $f_{\phi_1}^{h_1}*\cdots*f_{\phi_M}^{h_M}$,
coincides with a quasi-polynomial function on the domain $H^{\geq 0}\cap \Lambda$.

Now, to prove that the left hand side coincide with the right hand side,
we will  use the fact  that two quasi-polynomial functions agreeing on $\c\cap \Lambda$,
where $\c$ is a cone with non empty interior, coincide on $\Lambda$.

If $\lambda\in \tau$ is sufficiently near a point of $\overline \tau\cap H$,
then the set  $(\lambda-\sum_{i=1}^M \R_{\geq 0}\phi_i)\cap H$ is contained in $\tau_H$.
We see that the set
$\c:=\{\lambda\in \tau;  (\lambda-\sum_{i=1}^M \R_{\geq 0}\phi_i)\cap H\subset \tau_H\}$
is an open cone in $\tau$. On $\c\cap \Lambda$, the function
    $\discretepol(\Phi,h,\tau)$ coincide with $E(\Phi,h)$. On the other hand,
    if we compute $E(\Phi,h)$ and $E(\Phi\cap H,h)$ by their respective convolution formulae,
    we obtain that the right hand side coincide also with $E(\Phi,h)$ for $\lambda\in \c\cap \Lambda$.
    This establishes the proposition.
\end{proof}

\bigskip

 Let $\mathcal X(\Phi,A,\tau_2)=\geom(\flip_A X(\Phi_{\rm flip}^A,\tau_2))$
 and let $$\discretepol(\Phi,A,h,\tau_2)(\lambda)=\sum_{x\in V(\Phi,\lambda)\cap \Z^N} \mathcal X(\Phi,A,\tau_2)(x).$$
 We then obtain
 $$\discretepol(\Phi,h,\tau_1)-\discretepol(\Phi,h,\tau_2)=\discretepol(\Phi,A,h,\tau_2).$$

Remark that the tope $\tau_2$ for the flipped system $\Phi_{\rm flip}^A$
is such that $\overline \tau_2\cap H$ is on the boundary of the cone $\Phi_{\rm flip}^A$.
Using a slight modification of  Proposition \ref{prop:covol} above,
we then can give the following ``convolution description" of the function
$\discretepol(\Phi,A,h,\tau_2).$

 Let $I^+:=\{a_1,a_2,\ldots,a_p\}$ be the set of indices $i$
 such that $\phi_i$ is on the open half space delimited by $H$ containing $\tau_2$; similarly let
$I^-:=\{b_1,b_2,\ldots,b_p\}$ be the set of indices $j$ such
  that $\phi_j$ is on the open half space delimited by $H$ containing $\tau_1$;
 then the sequence
 $$[\phi_{a_1},\ldots,\phi_{a_p},-\phi_{b_1},\ldots, -\phi_{b_q}]
 $$
 is contained in the open half space delimited by $H$ and containing $\tau_2$.

 Define
 $$\flip f_{\phi}^h=-\sum_{n=1}^{\infty} h(-n)\delta_{-n \phi}.
 $$
Then we have
\begin{proposition}
The quasi-polynomial function  $\discretepol(\Phi,A,h,\tau_2)$ is given by the convolution formula:
\begin{multline*}
\discretepol(\Phi,A,h,\tau_2)=\\
\discretepol(\Phi\cap H,h,\tau_{1,2})*\left(\prod_{a\in A}
\flip f_{\phi_a}^{h_a}*\prod_{b\in B} f_{\phi_b}^{h_b}-
\prod_{b\in B} \flip f_{\phi_b}^{h_b}*\prod_{a\in A} f_{\phi_a}^{h_a}\right).
\end{multline*}
\end{proposition}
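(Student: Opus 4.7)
The plan is to verify the claimed equality separately on each of the two open half-spaces delimited by $H$, and then conclude by the uniqueness of the quasi-polynomial extension from any full-dimensional open cone.

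For $\lambda$ in the tope $\tau_2$, combining the definition of $\discretepol(\Phi,A,h,\tau_2)$ with Corollary~\ref{th:saut_geometrique} gives
\begin{equation*}
\discretepol(\Phi,A,h,\tau_2)(\lambda)=-(-1)^{|A|}\sum_{x\in \p_{\rm flip}(\Phi,A,\lambda)\cap \Z^N}h(x).
\end{equation*}
I would then split the indices as $\{1,\dots,N\}=A\sqcup B\sqcup I_H$, where $I_H$ collects those $i$ with $\phi_i\in H$, and substitute $x_a=-n_a$ with $n_a\ge 1$ for $a\in A$ and $x_b=n_b$ with $n_b\ge 0$ for $b\in B$. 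The constraint $\sum_i x_i\phi_i=\lambda$ then becomes $\sum_{i\in I_H}x_i\phi_i=\lambda+\sum_a n_a\phi_a-\sum_b n_b\phi_b\in H$. For $\lambda$ on the $\tau_2$-side of $H$, this shifted parameter lies in the region of $H$ where Proposition~\ref{prop:covol}, applied to the subsystem $\Phi\cap H$ and to the wall-tope $\tau_{1,2}$, identifies the inner sum over $(x_i)_{i\in I_H}$ with $\discretepol(\Phi\cap H,h,\tau_{1,2})$ evaluated at the shifted point. Collecting the sums over $n_a$ and $n_b$ and using the definitions of $f_{\phi_b}^{h_b}$ and $\flip f_{\phi_a}^{h_a}$, the iterated sum becomes precisely the first convolution term in the statement; the minus signs in the definition of $\flip f_\phi^h$ provide the global sign $(-1)^{|A|}$ needed to match the prefactor $-(-1)^{|A|}$ (up to the sign convention of the statement).

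The symmetric analysis for $\lambda$ on the $\tau_1$-side, exchanging the roles of $A$ and $B$ and of $\tau_1$ and $\tau_2$, shows that $\discretepol(\Phi,A,h,\tau_2)$ matches the second convolution term with the explicit minus sign. A direct support check—using that the vectors $\phi_b$ for $b\in B$ and $-\phi_a$ for $a\in A$ all strictly point into the half-space $H^+$ containing $\tau_2$—shows that the first convolution term is supported in $H^+$ and the second in $H^-$, so the two pieces combine consistently into a single function on $\Lambda$. Both sides are piecewise quasi-polynomial: the left by Theorem~\ref{th:polynom_integral_and_discrete}, and each convolution by the general fact (cited in the proof of Proposition~\ref{prop:covol}) that the convolution of a quasi-polynomial on $\Lambda\cap H$ with finitely many generating series $f_\phi^h$ or $\flip f_\phi^h$ coincides with a quasi-polynomial on each closed half-space of $H$. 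Since the two sides agree on the full-dimensional open topes $\tau_1$ and $\tau_2$, they coincide on all of $\Lambda$. The main obstacle will be verifying that Proposition~\ref{prop:covol} applies with precisely the tope $\tau_{1,2}$ of $\Phi\cap H$ in both half-spaces, and keeping all signs straight through the interplay of $\flip_A$, the $\flip f_\phi^h$ operators, and the prefactor $-(-1)^{|A|}$ coming from the wall-crossing formula.
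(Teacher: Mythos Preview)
Your approach is essentially the one the paper sketches: the paper's entire argument is the sentence ``Using a slight modification of Proposition~\ref{prop:covol} above, we then can give the following convolution description,'' together with the remark that $H$ is a face of the cone $\c(\Phi_{\rm flip}^A)$ and that $\overline{\tau_2}\cap H\subset\overline{\tau_{1,2}}$. Your plan of checking the formula on each half-space via the flipped partition polytope and then invoking quasi-polynomial uniqueness is exactly the intended elaboration.

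One point deserves tightening. You write that for $\lambda$ on the $\tau_2$-side the shifted parameter lands in the region where the inner sum over $I_H$ equals $\discretepol(\Phi\cap H,h,\tau_{1,2})$. The inner sum is literally $E(\Phi\cap H,h_{I_H})(\nu)$, and the identification $E(\Phi\cap H,h)(\nu)=\discretepol(\Phi\cap H,h,\tau_{1,2})(\nu)$ holds only when $\nu\in\overline{\tau_{1,2}}$ (Corollary~\ref{th:continuity-on-closed-tope}), which in turn requires $\lambda$ to lie in a suitable open subcone $\c\subset\tau_2$ close to the wall (exactly as in the proof of Proposition~\ref{prop:covol}), not on all of $\tau_2$. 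So your direct computation matches the first convolution term only on this smaller cone $\c$. This does not break the argument---both sides are quasi-polynomial on the closed half-space $H^{\ge 0}$, and agreement on $\c\cap\Lambda$ forces agreement on all of $H^{\ge 0}\cap\Lambda$---but you should phrase it this way rather than claiming equality on the whole tope $\tau_2$ before invoking uniqueness. The same remark applies to the $\tau_1$-side. With that adjustment your proof is complete and coincides with the paper's intended route.
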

This is the convolution formula given by Paradan \cite{paradan2004} for the jump.
It expresses the jump in terms of sums of the function
$\discretepol(\Phi\cap H,h,\tau_{1,2})$ associated to a lower dimensional system (see also \cite{MR2573189}).

\section{A refinement of Brion's theorem}\label{section degenerate}
Let $\p\subset V$ be a full-dimensional polytope in a
vector space $V$ provided with a lattice $V_\Z$.
 Recall  Brion's Formula (\ref{eq:brion-generating-function})
  for the  generating function
of a polytope.
$$
S(\p)(\xi)=\sum_{s\in \CV(\p)}S(s+\c_s)(\xi).
$$
As $\p$ is compact, the  function  $\xi\in V^* \mapsto S(\p)(\xi)$
is holomorphic, but the contribution of each cone  is a meromorphic
function with singularities along hyperplanes. More precisely, an
element $\xi \in V^*$ is singular for $S(s+\c_s)$ if and only if
$\xi$ is constant on some face $\f$ of $\p$ such that $s$ is a vertex
of $\f$ and $\dim \f>0$.

It is well known that Brion's formula  is the
combinatorial translation of  the localization formula in
equivariant cohomology, in the case of isolated fixed points. In
this section, we
 generalize (\ref{eq:brion-generating-function}) to the combinatorial
case which corresponds to non isolated fixed points \cite{MR685019}. In this
degenerate case, the connected components of the set of fixed points
correspond to the  faces of $\p$ on which $\xi$ is constant which
are maximal with respect to this property. The contribution of such
a face to the sum $S(\p)(\xi)$ is
$$
\sum_{s\in \CV(\f)}S(s+\c_s)(\xi).
$$
We will study this sum by relating it to a Brianchon-Gram
continuation of the face $\f$. We will assume that the polytope $\p$
is simple. The general case needs more efforts.

We need to introduce some  meromorphic functions similar to  the function $S(s+\c)(\xi)$.
Let $\q=s+\c$ be a   polyhedral cone in $V$, where $\c$ is a cone  generated by elements $g_j\in V_\Z$ .
Let $P$ be a quasi polynomial function on $V_\Z$.
The following sum $\sum_{x\in V_\Z\cap \q}P(\xi)e^{\ll \xi,x\rr}$
defines a generalized function  $F$ of the variable $\xi\in i V^*$.

It is easy to see that  $\prod_{i}(1-e^{\ll\xi,g_j\rr})F(\xi)$ is an analytic function of $\xi$.
 Thus, outside the affine hyperplanes in $iV^*$ defined by $\ll \xi,g_j\rr \in 2i\pi \Z$,
 the generalized function $F(\xi)$ is equal to $S(\q,P)(\xi)$, where $S(\q,P)(\xi)$
 is a meromorphic function of $\xi$ with poles on $\ll g_j,\xi\rr \in 2i\pi \Z$.
 In particular this function belongs  to the space $\CM_{\ell}(V^*)$  introduced before.
 We write
 $$S(\q,P)(\xi)=\sum_{x\in V_\Z\cap\q}P(\xi)e^{\ll\xi,x\rr}$$
   and depending on the context, we consider $S(\q,P)$ either as a generalized function
   of $\xi\in iV^*$ or as a meromorphic function of $\xi\in V_\C$.
If $\q$ is a cone  invariant by translation by a vector $v\in V_\Z$ ,
it is easy that the generalized function $S(\q,P)(\xi)$ is annihilated  by a power of $(1-e^{\ll v,\xi\rr})$.
The simplest case is when $P=1$, $V=\c=\R$, $V_\Z=\Z$, so that the equality is simply
$(1-e^{i\theta})\sum_{n\in \Z}e^{in \theta}=0$.
In particular, if $\q$ is a flat cone, the meromorphic function $S(\q,P)(\xi)$ is equal to $0$.

If $\f$ is a face of $\p$, we denote by
$\aff(\f)$ the affine space generated by $\f$ and by $\lin \f$ the linear space parallel to $\aff(\f)$, that is the space spanned by elements $x-y$ with $x,y\in \f$.
The projection $\t_{\rm trans}(\p,\f)$ of $\t_{\aff}(\p,\f)$ in $V/\lin \f$ is called the transverse cone. Note that this transverse cone is a salient cone in $V/\lin \f$ with vertex $y_0$ the projection of any $y\in \f$.

\begin{theorem}\label{th:Brion_generalise}
Let $V$ be a rational vector space with lattice $V_\Z$.  Let
$\p\subset V$ be a simple rational polytope and let $\f$ be a face
of $\p$. Let $\t_{\rm trans}(\p,f)\subset V/\lin\f$ be the transverse cone.
The tangent cone to $\p$ at the vertex $s$ is denoted by $s+\c_s$. For
$\xi\in V^*$, let
$$
S(s+\c_s)(\xi)=\sum_{x\in (s+\c_s)\cap V_\Z}e^{\langle \xi,x
\rangle}.
$$
The set of vertices of $\f$ is denoted by $\CV(\f)$.

\noindent (i) The sum $\sum_{s\in \CV(\f)}S(s+\c_s)(\xi)$ restricts
to a meromorphic function on $\lin\f^\perp\subset V^*$, which is
given by
$$
\sum_{s\in \CV(\f)}S(s+\c_s)(\xi)=\sum_{y\in\t_{\rm trans}(\p,\f)\cap
{(V/\lin\f)_\Z}}e^{\langle\xi,y\rangle}P(y),
$$
 where $P(y)$ is a quasi-polynomial function  on the
projected lattice \\
$(V/\lin\f)_\Z\subset V/\lin\f$. Moreover if $\xi$
is regular with respect to the cone $\t_{\rm trans}(\p,\f)$, that is if $\xi$ is
not constant on a face strictly containing $\f$, then $ \sum_{s\in
\CV(\f)}S(s+\c_s)(\xi)$ is holomorphic at $\xi$.

\noindent (ii)  For $y$ close enough to the vertex $y_0$ of the
transverse cone, $P(y)$ is the number of lattice points of the slice
$\p\cap (\lin\f+y)$.
\end{theorem}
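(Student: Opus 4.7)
The plan is to reduce the theorem to Theorem \ref{th:polynom_integral_and_discrete} applied to the parametric family of slices of $\p$ parallel to $\lin\f$, combined with Brion's theorem inside each slice, which provides the cancellation of poles along $\lin\f^\perp$ coming from the edges of the $\c_s$ lying in $\lin\f$.

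First, set up the parametric family. Fix a vertex $s_0\in\CV(\f)$ and identify $\aff(\f)$ with $\lin\f$ via translation by $s_0$; write $F=V/\lin\f$, $F_\Z=(V/\lin\f)_\Z$, and $y_0=\bar s_0$. Let $\mu_1,\dots,\mu_N\in(\lin\f)^*$ be the restrictions to $\lin\f$ of the inward conormals of the facets of $\p$ not containing $\f$. For $y\in F$ with lift $\widetilde y\in V$, the slice $\p\cap(\lin\f+\widetilde y)$ coincides with the polytope $\q(b(y))=\{x\in\lin\f:\langle\mu_i,x\rangle\leq b_i(y)\}$, where each $b_i(y)$ depends affinely on $y$; at $y_0$, $\q(b(y_0))=\f$. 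Let $\tau$ be the tope such that $y_0\in\overline\tau$ and $\overline\tau$ coincides near $y_0$ with $y_0+\t_{\rm trans}(\p,\f)$. Because $\p$ is simple, for each $s\in\CV(\f)$ the cone $\c_s$ has $d$ edges splitting into $\dim\f$ edges of $\f$ through $s$ (spanning $\lin\f$) and $\codim\f$ transverse edges whose projections to $F$ are independent of $s$ and generate the simplicial cone $\t_{\rm trans}(\p,\f)$; consequently, for every $y\in y_0+\t_{\rm trans}(\p,\f)$, $(s+\c_s)\cap(\lin\f+\widetilde y)=v_s(y)+\c_s^\f$, with $v_s(y)$ linear in $y$ and $\c_s^\f$ the tangent cone of $\f$ at $s$; for $y\in\overline\tau$ near $y_0$, $v_s(y)$ is precisely the vertex of $\q(b(y))$ corresponding to $s$.

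Next, slice the generating function and apply Brion inside each slice. Introduce an auxiliary $\eta\in(\lin\f)^*_\C$ with $\operatorname{Re}\langle\eta,g\rangle<0$ on the generators of all $\c_s^\f$; for $\xi\in\lin\f^\perp$, the decomposition
\[
S(s+\c_s)(\xi+\eta)=\sum_{y\in (y_0+\t_{\rm trans}(\p,\f))\cap F_\Z}e^{\langle\xi,y\rangle}\,S_{\lin\f}\bigl((s+\c_s)\cap(\lin\f+\widetilde y)\bigr)(\eta)
\]
holds in the common domain of convergence. Summing over $s\in\CV(\f)$ and applying Brion's theorem to the simple polytope $\q(b(y))\subset\lin\f$ (for $y\in\overline\tau$ near $y_0$), the inner sum becomes the entire function $S_{\lin\f}(\q(b(y)))(\eta)$, whose value at $\eta=0$ is $\#(\q(b(y))\cap V_\Z)$. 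The polar parts along $\lin\f^\perp$ cancel, and specializing to $\eta\to 0$ yields
\[
\sum_{s\in\CV(\f)}S(s+\c_s)(\xi)=\sum_{y\in\t_{\rm trans}(\p,\f)\cap F_\Z}e^{\langle\xi,y\rangle}\,P(y),
\]
with $P(y)=\#(\q(b(y))\cap V_\Z)$ for $y\in\overline\tau$ close to $y_0$.

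Finally, upgrade $P$ to a quasi-polynomial on all of $F_\Z$ by applying Theorem \ref{th:polynom_integral_and_discrete} with weight $h=1$ to the parametric family: this produces a quasi-polynomial function $P$ on $F_\Z$ coinciding with $\#(\q(b(y))\cap V_\Z)$ on the fattened tope $\overline\tau-\b(\Phi)$, which contains a neighborhood of $y_0$ in $\t_{\rm trans}(\p,\f)$, giving part (ii) via Corollary \ref{th:continuity-on-closed-tope}. The identity of meromorphic functions on $\lin\f^\perp$ then extends globally by uniqueness of meromorphic continuation. For $\xi$ regular with respect to $\t_{\rm trans}(\p,\f)$, i.e.\ $\langle\xi,\bar g\rangle\neq 0$ for every edge generator $\bar g$ of $\t_{\rm trans}(\p,\f)$, a standard generating-function argument on simplicial cones with quasi-polynomial weight shows that the only possible singularities of the right-hand side are on the hyperplanes $\langle\xi,\bar g\rangle\in 2\pi i\Z$, so the right-hand side is holomorphic at $\xi$. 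The main obstacle is the joint management of the two meromorphic variables $(\xi,\eta)$ in the slicing step, particularly justifying the passage $\eta\to 0$ after summation over $\CV(\f)$: this is best handled inside the space $\CM_\ell$ of meromorphic functions with controlled poles (as in the proof of Theorem \ref{th:polynom_integral_and_discrete}), so that Brion inside each slice annihilates precisely the $\eta$-poles contributed by the $\lin\f$-edges of $\c_s$ before the limit is taken.
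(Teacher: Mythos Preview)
Your argument is correct and follows the same overall strategy as the paper: slice $\p$ by affine subspaces parallel to $\lin\f$, recognise that the contribution of the vertices of $\f$ to Brion's formula is computed slice by slice, and invoke the analytic continuation machinery (Theorem~\ref{th:polynom_integral_and_discrete}) to obtain the quasi-polynomial $P$. The tactical choices differ, though. The paper replaces the vertex sum by the full Brianchon--Gram sum $\sum_{\g\in\CF(\f)}(-1)^{\dim\g}S(\t_{\aff}(\p,\g))$, which produces for each $y$ the compactly supported function $\CT(y)$ of~\eqref{eq:defTy}; it then constructs an explicit auxiliary system $\widetilde{\Phi}_I$ (Definition~\ref{def:BG_continuation_of_face}) so that the slices become partition polytopes and $\CT(y)$ is literally $\varchenko(\widetilde{\Phi}_I,\tau_I)[V(\widetilde{\Phi}_I,(\lambda,y))]$ (Proposition~\ref{th:face-contribution_simple}); the slicing step \eqref{eq:slicing}--\eqref{eq:useTy} is justified as an identity of generalized functions on $i\lin\f^\perp$. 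You instead stay with Brion over vertices, introduce an auxiliary $\eta\in(\lin\f)^*$, and pass to $\eta\to 0$ inside $\CM_\ell$.

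What each buys: the paper's Brianchon--Gram step gives bounded support in each slice \emph{before} any analytic continuation, so the lattice sums defining $P(y)$ are manifestly finite and the distributional bookkeeping is clean. Your route is shorter to set up (no $\widetilde{\Phi}_I$), but the $\eta\to 0$ passage is exactly the place where one must invoke the bounded-support result for the analytic continuation (Corollary~\ref{co:bounded}) anyway, since the tangent cones $\c_s^\f$ at the various vertices of $\f$ have no common domain of absolute convergence; once you use that, your inner sum $\sum_{s}S_{\lin\f}(v_s(y)+\c_s^\f)$ is identified with the discrete generating function of the analytically continued slice and the limit is legitimate. So the two arguments are really the same proof in different clothing.
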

\begin{proof}
We compute  the signed sum of the generating functions of
the tangent cones $\t_{\aff}(\p,\g)$ where $\g$ runs over the set
$\CF(\f)\subset \CF(\p)$  of  faces of $\f$. Since
$\t_{\aff}(\p,\g)$ contains lines if $\g$ is not a vertex, we have
\begin{equation}\label{eq:trans}
\sum_{s\in \CV(\f)}S(s+\c_s)(\xi)=\sum_{\g\in \CF(\f)}(-1)^{\dim
\g}S(\t_{\aff}(\p,\g))(\xi).
\end{equation}

We will relate the right hand side  to sums over slices of $\p$ by affine subspaces parallel to $\f$.

We define
\begin{equation}\label{eq:defTy}
\CT(y)(x)=\sum_{\g\in \CF(\f)}(-1)^{\dim
\g}\suppresschi{[\t_{\rm aff}(\p,\g)\cap(\aff(\f)+y)]}(x).
\end{equation}

The support of $\CT(y)$ is illustrated in
Fig.\ref{fig:Brianchon-Gram continuation of a face}.

Let us only observe that, as $\t_{\aff}(\p,\g)\cap \aff(\f)$ is the tangent cone of the polytope
$\f\subset \aff(\f)$ along  its face  $\g$, we have,  by
Brianchon-Gram theorem,
\begin{eqnarray*}
 \CT(0) = \suppresschi{[\f]}.
\end{eqnarray*}
Moreover,  if $y$  is small enough, then $\CT(y)$ is the characteristic
function of the intersection $\p\cap (\aff(\f)+y)$.
 This result can be deduced from the Euler relations.
 In the next section, in the case where   $\p$ is
 simple, we will obtain it as a consequence of Corollary
 \ref{th:continuity-on-closed-tope} which is of course
itself based on  the Euler relations  via the Brianchon-Gram
theorem.

 Let us compute the right hand side of Equation (\ref{eq:trans}).
If $\xi\in \lin\f^\perp$ then $e^{\langle\xi,x\rangle}$ is constant
on $\lin\f+y$. Identifying $\lin\f^\perp$ with $(V/\lin\f)^*$, we
denote this constant value by  $e^{\langle\xi,y\rangle}$.

Thus, we slice the lattice $V_\Z$ in slices parallel to the subspace
$\lin\f$. The slices are indexed by the projected lattice
$(V/\lin\f)_\Z$. We write
\begin{multline}\label{eq:slicing}
 \sum_{\g\in \CF(\f)}(-1)^{\dim
\g}S(\t_{\aff}(\p,\g))(\xi)= \sum_{\g\in \CF(\f)}(-1)^{\dim
\g}\sum_{x\in
\t_{\aff}(\p,\g)}e^{\langle\xi,x\rangle}\\
=\sum_{y\in (V/\lin\f)_\Z} \sum_{\g\in \CF(\f)}(-1)^{\dim
\g}\sum_{x\in \t_{\aff}(\p,\g)\cap (\lin\f+y)\cap
V_\Z}e^{\langle\xi,x\rangle}.
\end{multline}
 Let $y_0\in
V/\lin\f$ be the projection of the face $\f$. From
(\ref{eq:slicing}), we obtain
\begin{equation}\label{eq:useTy}
\sum_{s\in \CV(\f)}S(s+\c_s)(\xi)=\sum_{y\in (V/\lin\f)_\Z}
e^{\langle\xi,y\rangle} \sum_{x\in V_\Z}\CT(y-y_0)(x).
\end{equation}
The shift by $y_0$ is there to make further notations simpler. At
this point, we postpone the proof of Theorem
\ref{th:Brion_generalise} until the next section, where  we will
relate $\CT(y)$ to the
 Brianchon-Gram continuation of the face $\f$, under the assumption that $\p$ is simple.

\subsection{Brianchon-Gram continuation of a face of a
partition polytope}  Let  $\p=\p(\Phi,\lambda)\subset \R^N$.
 Let $\f$ be a face of $\p$.
 If $\lambda$ is regular and belongs to a tope $\tau$,
 then there is a unique   $I\in\CG(\Phi,\tau)$ such that   $\f=\f(\Phi,\lambda ,I)$ is  the
corresponding face. We have $\dim \f=|I|-\dim F$. If $\lambda$ is on
a wall, there may be several such pairs $(\tau,I)$.

\begin{definition}\label{def:BG_continuation_of_face} Let $I\subset \{1,\dots,N\}$
be such that the sequence $\Phi_I$ generates $F$.
Let
$\widetilde{\Phi}_I=(\widetilde{\phi}_i), 1\leq i \leq N$, be the
sequence of elements in $F\oplus \R^{I^c}$ defined by
$\widetilde{\phi}_i=\phi_i$ if $i\in I$ and $\widetilde{\phi}_i=
\phi_i\oplus e_i$, if $i\in I^c$.
\end{definition}
\begin{lemma}\label{phi_tilde_lemma}
\noindent(i)  The sequence $\widetilde{\Phi}_I$ generates a salient
cone of full dimension in $F\oplus \R^{I^c}$.

\noindent(ii)  $ V(\widetilde{\Phi}_I,(\lambda,y))=\{x\in\R^N; \;
\sum_{i=1}^N x_i\phi_i=\lambda, \; x_i=y_i \mbox{  for  } i\in
I^c\}$.
\noindent(iii) Let $\tau$ be a $\Phi_I$-tope. Let  $R$ be an open
quadrant in $\R^{I^c}$. Then $ \{(\lambda,y)\in F\oplus \R^{I^c};\;
y\in R, \lambda -\sum_{i\in I^c}y_i\phi_i \in \tau\} $ is a
$\widetilde{\Phi}_I$-tope and all $\widetilde{\Phi}_I$-topes are of
this form.

\noindent(iv)   Let $\tau$ be a $\Phi_I$-tope, let
$\tau_I$ be the $\widetilde{\Phi}_I$-tope which consists
of $(\lambda,y)$ such that $y_i>0$ for $i\in I^c$ and $\lambda
-\sum_{i\in I^c}y_i\phi_i \in \tau$. Then
$\CG(\widetilde{\Phi}_I,\tau_I)$ is the set of $K\cup
I^c$, where $K\subseteq I $ and $K\in \CG(\Phi_I,\tau)$. Hence
$$
X(\widetilde{\Phi}_I,\tau_I)=\sum_{K\in
\CG(\Phi_I,\tau)}(-1)^{|K|-\dim F}\prod_{i\in I\setminus K}p_i
\prod_{i\in K\cup I^c}(p_i+q_i).
$$
\end{lemma}
\begin{proof}
(i) follows from the fact that $\Phi_I$ generates $F$. (ii) is
immediate.  Consider the linear bijection from $F\oplus \R^{I^c}$ to
itself defined by $(\lambda,y)\mapsto (\lambda -\sum_{i\in
I^c}y_i\phi_i ,y)$. The image of  $\widetilde{\phi}_i$ is $\phi_i$
if $i\in I$, and $ e_i$ if $i\in I^c$. Therefore the
$\widetilde{\Phi}_I$-topes are the pull-backs of the topes relative
to the sequence $\psi_i=\phi_i$ if $i\in I$ and $\psi_i=e_i$ if
$i\in I^c$. The latter are the products of $\Phi_I$-topes in $F$
with the quadrants in $\R^{I^c}$. This proves (iii).

Let  $\widetilde{K}\subseteq \{1,\dots,N\}$. Then
$\widetilde{\Phi}_{\widetilde{K}}$ generates $F\oplus \R^{I^c}$ if
and only if $\widetilde{K}=K\cup I^c$, where $K\subseteq I$ is such
that $\Phi_K$ generates $F$. Moreover $\tau_I \subset
c((\widetilde{\Phi}_I)_{\widetilde{K}}) $ if and only if $\tau
\subset \c(\Phi_K)$, whence (iv).
\end{proof}

\begin{proposition}\label{slice}
Let $\tau$ be a $\Phi$-tope and let $\lambda\in \overline{\tau}$.
 Let $\p=\p(\Phi,\lambda)$ and $\f=\f_I(\Phi,\lambda)$ be a
face of $\p$. Assume that $\dim \f=|I|-\dim F$.  We identify the
quotient space  $V/\lin\f$ with $\R^{I^c}$ by the projection
parallel to $\R^I$.  Let $\tau_I$ be the $\Phi_I$-tope which
contains $\tau$. If $y_i\geq 0$ for $i\in I^c$ and  $\lambda
-\sum_{i\in I^c}y_i\phi_i \in \overline{\tau_I}$, then
\begin{equation}\label{eq:p_cap_parallel_to_face}
\varchenko(\widetilde{\Phi}_I,\tau_I)\suppresschi{[V(\widetilde{\Phi}_I,(\lambda,y))]}
=\suppresschi{[\p(\widetilde{\Phi}_I,(\lambda,y))]}=\suppresschi{[\p(\Phi,\lambda)\cap
(\aff(\f)+y)]}.
\end{equation}
In particular, if $\lambda$ is regular, the conditions $y_i\geq 0$
for $i\in I^c$ and  $\lambda -\sum_{i\in I^c}y_i\phi_i \in
\overline{\tau_I}$ define  a neighborhood of $y=0$ in $\R_{\geq
0}^{I^c}$ on which (\ref{eq:p_cap_parallel_to_face}) holds.
\end{proposition}
\begin{proof}
The conditions $y_i\geq 0$ for $i\in I^c$ and  $\lambda -\sum_{i\in
I^c}y_i\phi_i \in \overline{\tau_I}$ mean that $(\lambda,y)$ belongs
to the closure of the $\widetilde{\Phi}_I$-tope $\tau_I$
associated to $\tau_I$. Therefore by Corollary
\ref{th:continuity-on-closed-tope}, we have
$$
\varchenko(\widetilde{\Phi}_I,\tau_I)\suppresschi{[V(\widetilde{\Phi}_I,(\lambda,y))]}
=\suppresschi{[\p(\widetilde{\Phi}_I,(\lambda,y))]}.
$$
Moreover, as $\dim \f=|I|-\dim F$, the affine span $\aff(\f)$ is
given by $ \aff(\f)= \{x\in V(\Phi,\lambda); x_i=0 \mbox{  for }
i\in I^c\}$. It follows that
$V(\widetilde{\Phi}_I,(\lambda,y))=\aff(\f)+y$, hence $
\p(\widetilde{\Phi}_I,(\lambda,y))=\p(\Phi,\lambda)\cap
(\aff(\f)+y)$.
\end{proof}
\begin{remark}\label{subdivision_transverse_cone}
Define $\q_0(\p,\f,\tau)\subseteq \R_{\geq 0}^{I^c}$ by
$$
\q_0(\p,\f,\tau)= \{y=(y_i)\in \R^{I^c}\; ; y_i\geq 0 \mbox{ for }i\in I^c, \lambda
-\sum_{i\in I^c}y_i\phi_i \in \overline{\tau_I}\}.
$$
The set $\q_0(\p,\f,\tau)$ is a polytope in $V/\lin\f\simeq
\R^{I^c}$. Let us denote its cone  at vertex $0$ by
$\t_0(\p,\f,\tau)$.
\begin{multline*}
\t_0(\p,\f,\tau)=\{y=(y_i)\in \R^{I^c}\; ;y_i\geq 0 \mbox{ for } i\in I^c, \\
\lambda
-\epsilon \sum_{i\in I^c}y_i\phi_i \in \overline{\tau_I}\mbox{ for }
\epsilon> 0  \mbox{ small enough }\}.
\end{multline*}
 Then $\t_0(\p,\f,\tau)$ is a subcone of
the transverse cone $\t_0(\p,\f)$.
If $\lambda\in \tau$ is regular, then $\t_0(\p,\f,\tau)= \t_0(\p,f)=\R_{\geq
0}^{I^c}$ .

 If $\lambda $ lies on a wall of a tope ${\tau}$, then $\t_0(\p,\f,\tau)$ may
 be
 strictly contained  in the transverse cone $\t_0(\p,\f)$. When we consider all  the topes $\tau$ such that $\lambda\in
 \overline{\tau}$, the cones
 $\t_0(\p,\f,\tau')$  form a subdivision of $\t_0(\p,f)$. An example is illustrated in  Fig. \ref{fig:tipi}.
The polytope $\p\subset \R^3$ is a tipi with four poles, with vertices
$(0,0,0), (1,0,0), (0,1,0), (0,0,1), (1,1,0)$ and $\f$ is  the
vertical edge with vertices $(0,0,0),(0,0,1)$.
The picture shows also the corresponding system $\Phi$ such that
$\p$ corresponds to a partition polytope $\p(\Phi,\lambda)$.
In this case, $\lambda$ belongs to the wall generated by $\phi_3$,
thus $\lambda$ belongs to two tope closures $\tau_1$ and $\tau_2$. We identify the quotient
$V/\lin(\f)$ with the ground.
Then the  sets  $\q_0(\p,\f,\tau_i)$ are the two triangles which subdivide the ground face of the tipi.

This remark suggests how to modify Proposition \ref{th:face-contribution_simple}  in the case of a non simple polytope.
\end{remark}
\begin{figure}[!h]
\begin{center}
  \includegraphics[width=1.5 in]{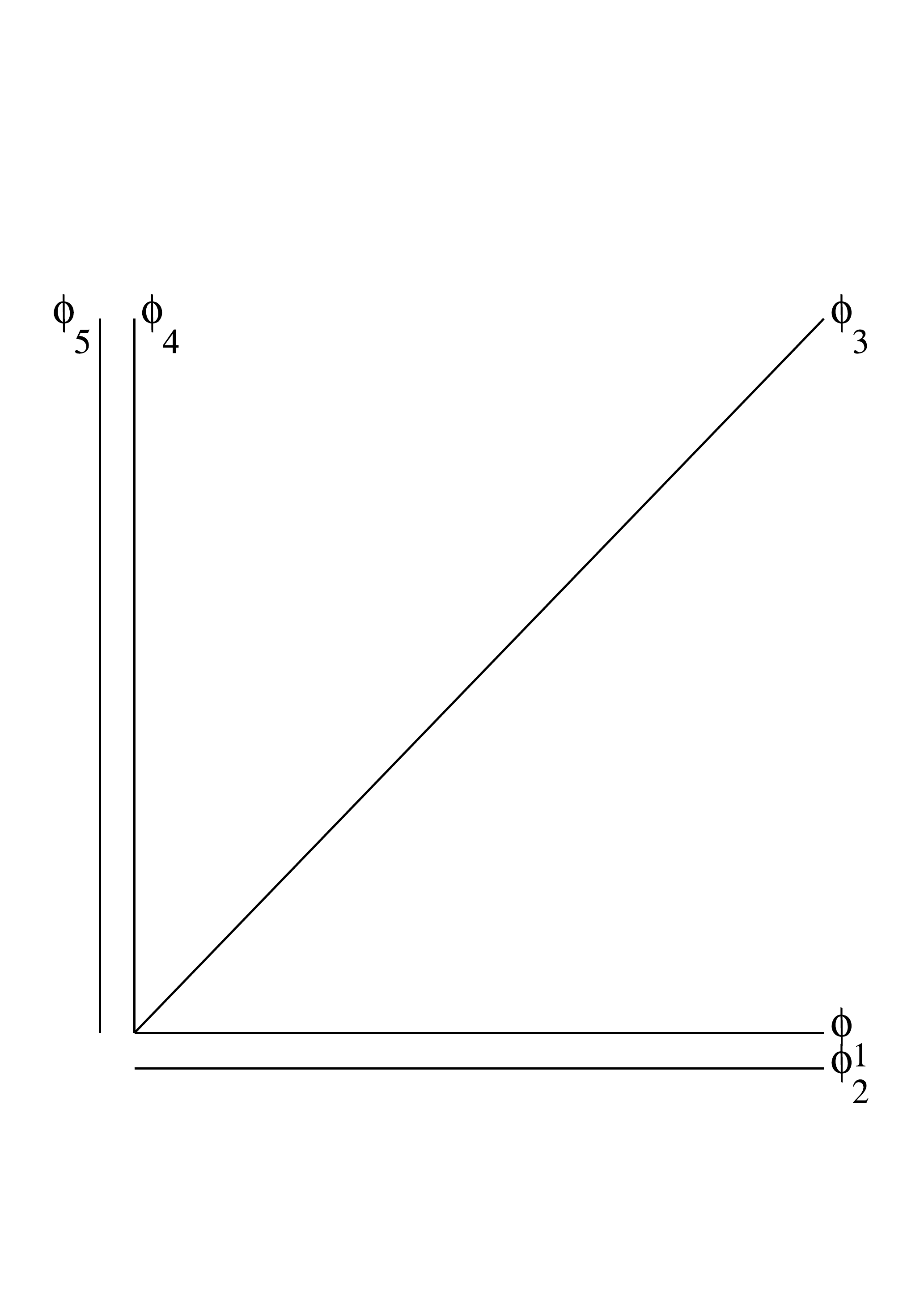}
  \includegraphics[width=2 in]{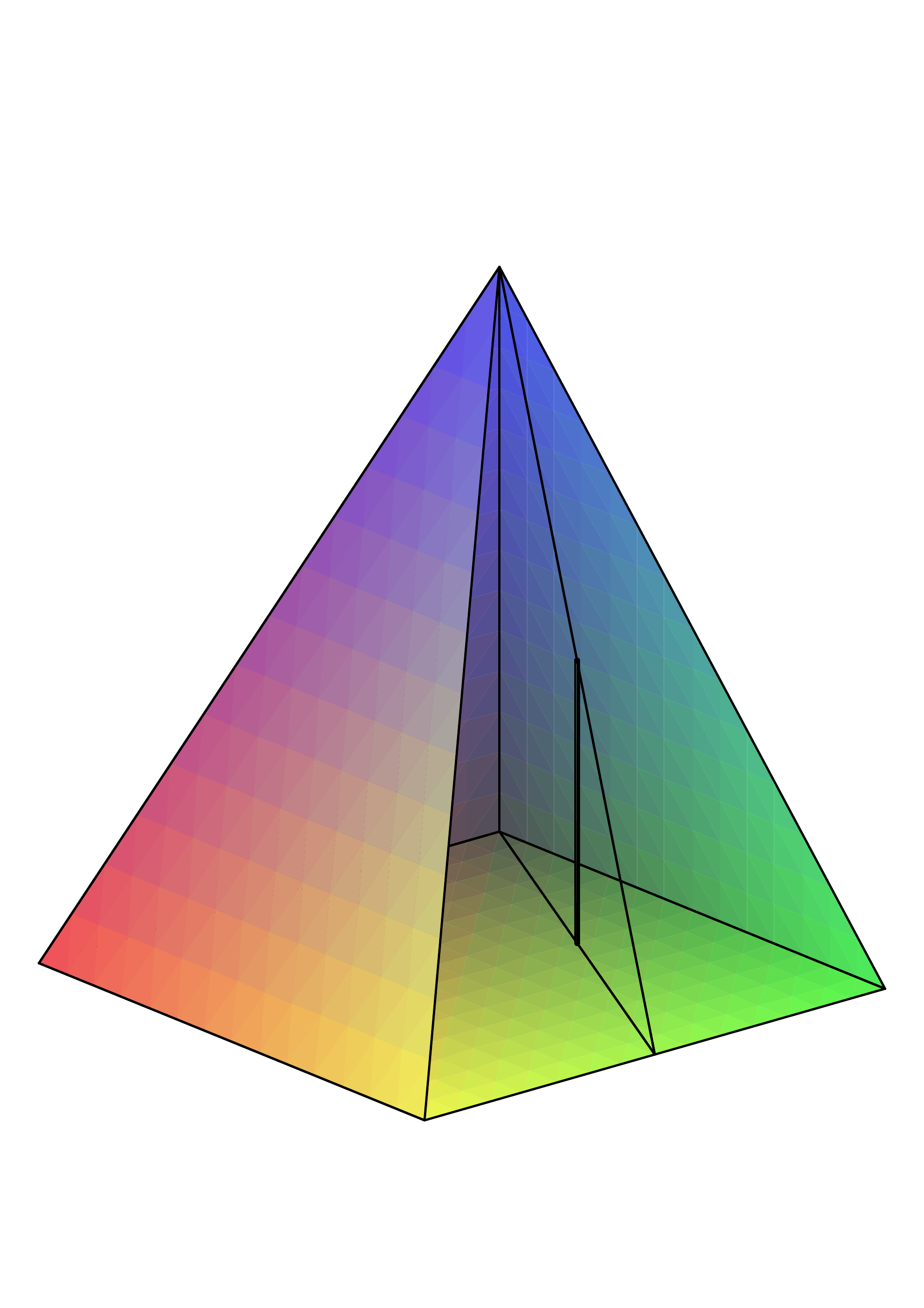}
  \caption{}
  \label{fig:tipi}
  \end{center}
\end{figure}

\begin{figure}[!h]
\begin{center}
  \includegraphics[width=2 in]{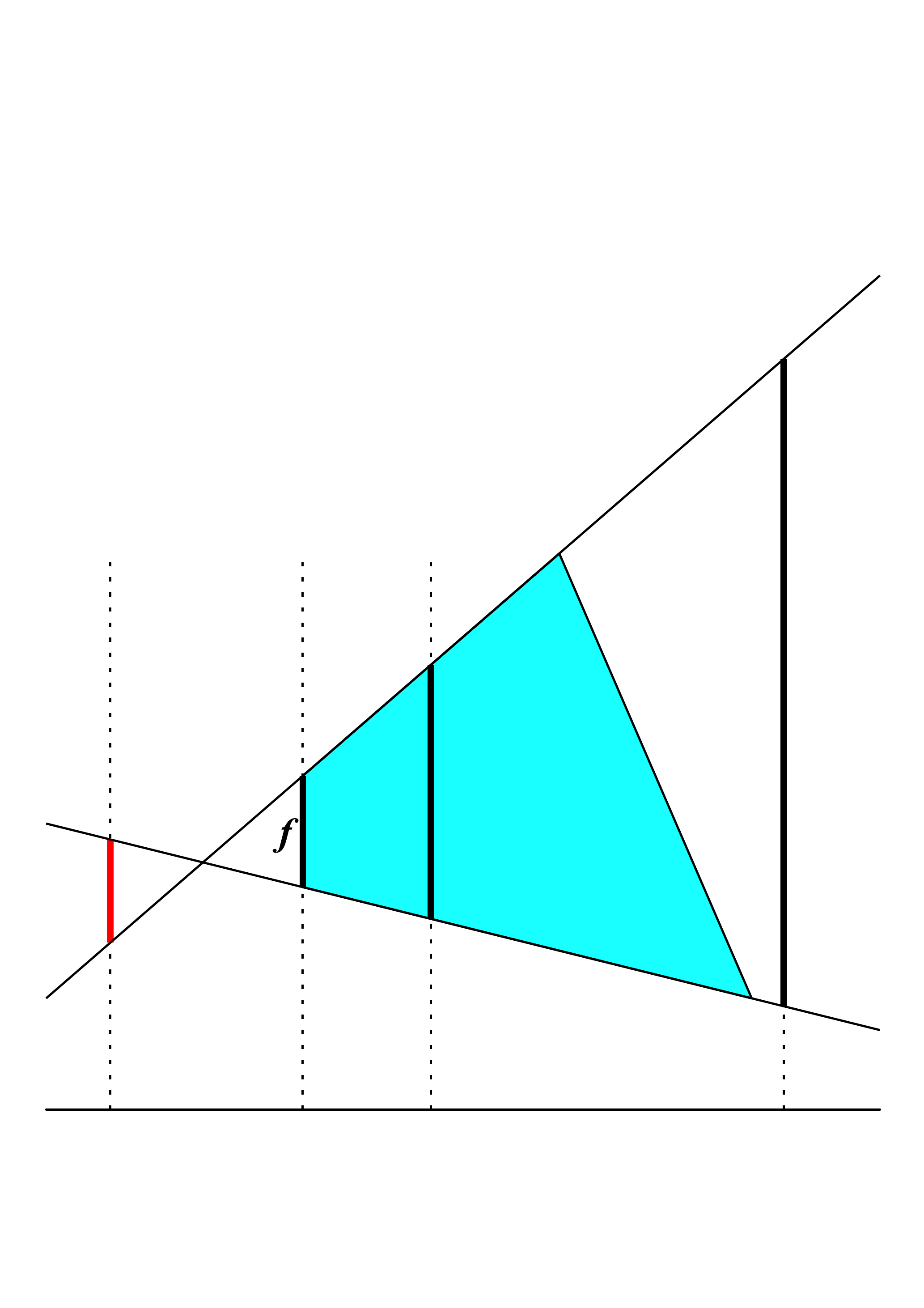}
\includegraphics[width=2.5 in]{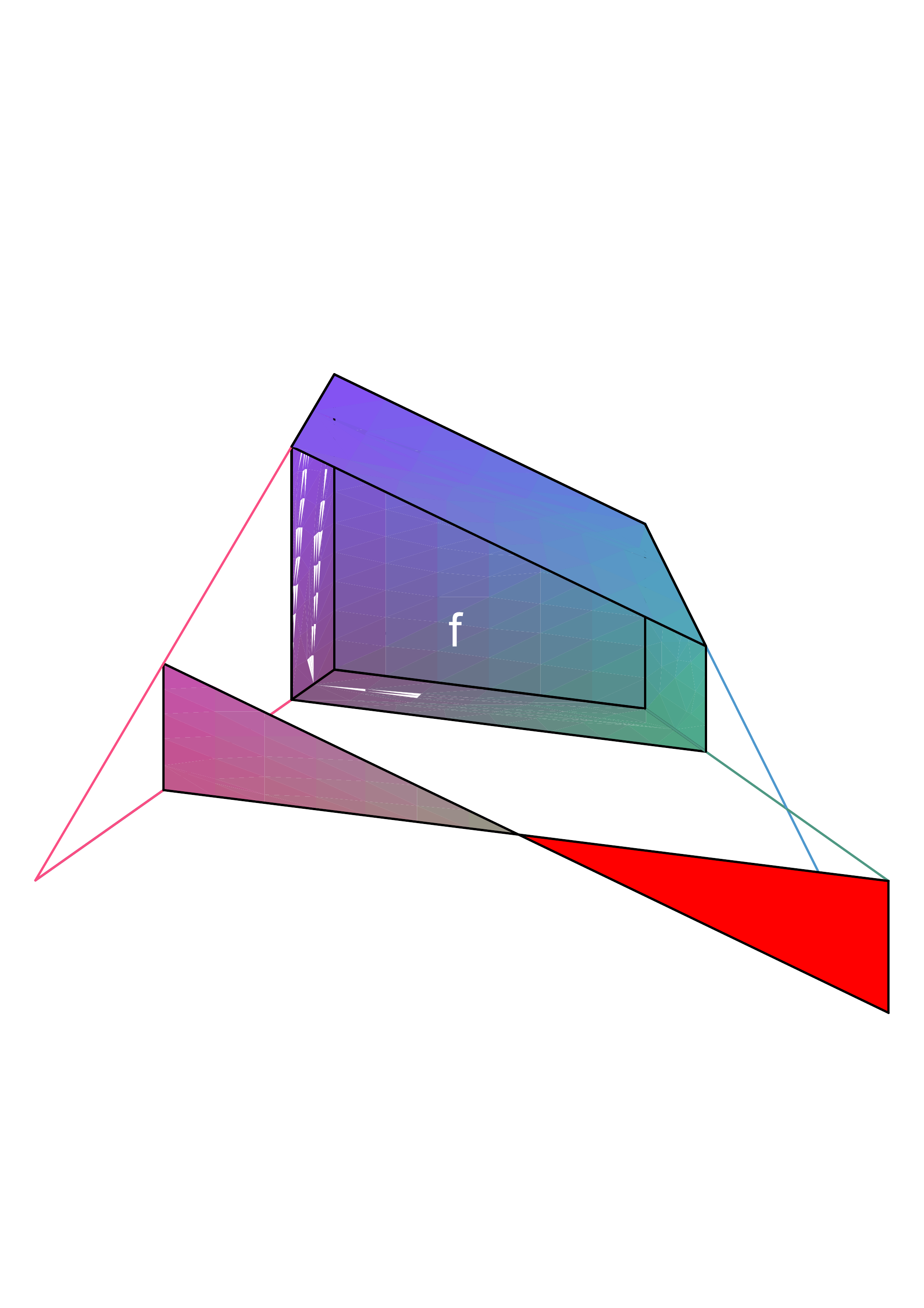}
  \caption{Brianchon-Gram continuation of a face. The segment and the triangle in red  come with a minus sign.
  The end-points of the segment have to be deleted and two edges of the triangle also. }
  \label{fig:Brianchon-Gram continuation of a face}
  \end{center}
\end{figure}
\begin{proposition}\label{th:face-contribution_simple}
 Let $\Phi=(\phi_j)_{1\leq j\leq N} $ be a sequence of non zero elements
of a vector space $F$, generating F, and  spanning a salient cone.
 Let $\tau$ be a $\Phi$-tope,  $ \lambda\in {\tau}$ a regular element
and  $I\in \CG(\Phi,\tau)$.   Let $\p=\p(\Phi,\lambda)$ and
$\f=\f(\Phi,\lambda,I)$.

We identify the quotient space  $V/\lin\f$ with $\R^{I^c}$
by the projection parallel to $\R^I$.  For $y\in \R^{I^c}$, let
\begin{equation}\label{cones-at-vertices-of-a-face}
\CT(y)=\sum_{\g\in \CF(\f)}(-1)^{\dim
\g}\suppresschi{[\t_{\rm aff}(\p,\g)\cap(\aff(\f)+{y})]},
\end{equation}
where the set of faces of $\f$ is denoted by $\CF(\f)$ .

Let $\widetilde{\Phi}_I=(\widetilde{\phi}_i), 1\leq i \leq N$, be
the sequence of elements in $F\oplus \R^{I^c}$ defined by
$\widetilde{\phi}_i=\phi_i$ if $i\in I$ and $\widetilde{\phi}_i=
\phi_i\oplus e_i$, if $i\in I^c$. Let $\tau_I$ be the
$\widetilde{\Phi}_I$-tope which consists of elements $(\lambda,y)\in
F\oplus \R^{I^c}$ such that $y_i>0$ for $i\in I^c$ and $\lambda
-\sum_{i\in I^c}y_i\phi_i \in \tau_I$, where $\tau_I$ is the unique
$\Phi_I$-tope which contains $\tau$.  Then
\begin{equation}\label{eq:face-contribution}
\CT(y)(x)= \varchenko(\widetilde{\Phi}_I,\tau_I)(x)
\suppresschi{[V(\widetilde{\Phi}_I,(\lambda,y))]}(x)\prod_{i\in
I^c}\suppresschi{[y_i\geq 0]},
\end{equation}
\end{proposition}

\begin{proof}
The faces $\g$ of $\f=\f(\Phi,\lambda,I)$ are indexed by the subsets
$K\in \CG(\Phi,\tau)$ which are contained in $I$. For
$\g=\f(\Phi,\lambda,K)$,  we have
$$
\t_{\aff}(\p,\g)=\{x\in V(\Phi,\lambda); x_i\geq 0 \mbox{  for  }
i\in K^c\}.
$$
We write (\ref{cones-at-vertices-of-a-face}) as
\begin{equation}\label{cones-at-vertices-of-a-face_simple}
\CT(y)= \sum_{K\in  \CG(\Phi,\tau),K\subseteq I}(-1)^{|K|-\dim
F}\prod_{i\in K^c}\suppresschi{[x_i\geq 0]}\; \suppresschi{[\aff(\f)+{y}]}.
\end{equation}
We observe that $\CG({\Phi}_I,{\tau}_I)=\{K\in
\CG(\Phi,\tau),K\subseteq I\}$.  Therefore, by Lemma
\ref{phi_tilde_lemma}, we have
\begin{multline*}
\varchenko(\widetilde{\Phi}_I,\tau_I)\suppresschi{[V(\widetilde{\Phi}_I,(\lambda,y))]}=\\
\sum_{K\in \CG(\Phi,\tau),K\subseteq I}(-1)^{|{K}|-\dim
{F}}\prod_{i\in I\setminus K}\suppresschi{[x_i\geq
0]}\suppresschi{[V(\widetilde{\Phi}_I,(\lambda,y))]}.
\end{multline*}
We factor out $\prod_{i\in I^c}\suppresschi{[x_i\geq 0]}$ in each summand
of (\ref{cones-at-vertices-of-a-face_simple}). \\
As
$V(\widetilde{\Phi}_I,(\lambda,y))=\aff(\f)+y$, we obtain
$$
\CT(y)(x)= \varchenko(\widetilde{\Phi}_I,\tau_I)(x)
\suppresschi{[V(\widetilde{\Phi}_I,(\lambda,y))]}(x)\prod_{i\in
I^c}\suppresschi{[x_i\geq 0]}.
$$
As  $x_i=y_i$ for $i\in I^c$ if $x\in
V(\widetilde{\Phi}_I,(\lambda,y))$,  we obtain
(\ref{eq:face-contribution})
\end{proof}
We resume the proof of Theorem \ref{th:Brion_generalise}.
\begin{proof}[Proof of Theorem \ref{th:Brion_generalise}]

We identify $\p$ with a partition polytope $\p(\Phi,\lambda)$ by an
affine map $V\simeq V(\Phi,\lambda)$. We can assume that $\lambda$
is regular. Some care is needed with respect to the lattice
 $V_\Z$.
In general, its image $V(\Phi,\lambda)_\Z$ in $ V(\Phi,\lambda)$ is
not $\Z^N \cap V(\Phi,\lambda)$. However we can always write
$V(\Phi,\lambda)_\Z = (b + \Gamma)\cap V(\Phi,\lambda)$ , where
$b\in \Q^N$ and $\Gamma$ is a lattice in $\R^N$, ($\Gamma$ is
a fixed lattice and  $b$ projects on $\lambda$). Let $\tau$ be the
$\Phi$-tope which contains $\lambda$ and let $I\in \CG(\Phi,\tau)$
such that $\f$ is identified with the face $\f(\Phi,\lambda,I)$.
Then $V/\lin\f$ is identified with $V/\lin\f\simeq \R^{I^c}$
and the projected lattice $(V/\lin\f)_\Z$ is identified with a
lattice in $\R^{I^c}$. By Proposition
\ref{th:face-contribution_simple}, we have, for every $x\in \R^N$,
$$
\CT(y)(x)=\varchenko(\widetilde{\Phi}_I,\tau_I)\suppresschi{[V(\widetilde{\Phi}_I,(\lambda,y))]}(x)\prod_{i\in
I^c}\suppresschi{[y_i\geq 0]}.
$$
So we define
\begin{equation*}\label{eq:brion_generalise_Py-2}
P(y)=\sum_{x\in b+\Gamma}
\varchenko(\widetilde{\Phi}_I,\tau_I)\suppresschi{[V(\widetilde{\Phi}_I,(\lambda,y-y_0))]}(x).
\end{equation*}
Then $P(y)$  is a quasi-polynomial function of $y\in (V/\lin\f)_\Z$.
This fact follows from a minor generalization of Theorem
\ref{th:polynom_integral_and_discrete} (ii). We only have to take
care of the shifts: the summation is over $x\in b+\Gamma$ and the
parameter $y-y_0$ in the Brianchon-Gram function  runs over the
shifted lattice $(V/\lin\f )_\Z -y_0$.

The   equalities (\ref{eq:slicing}) and
(\ref{eq:useTy}) of generalized functions imply   equalities of holomorphic functions
of $\xi$ in an open subset of $(\lin\f)^\perp$, hence $\sum_{s\in
\CV(\f)}S(s+\c_s)(\xi)$ restricts to a meromorphic function on
$(\lin\f)^\perp$, given by
\begin{equation}\label{eq:brion_generalise_Py-1}
\sum_{s\in \CV(\f)}S(s+\c_s)(\xi)=\sum_{y\in \t_{\rm trans}(\p,\f)\cap
(V/\lin\f)_\Z} e^{\langle\xi,y\rangle}P(y).
\end{equation}
So we have proved (i).

By Proposition \ref{slice}, for $y\in \t_{\rm trans}(\p,\f)$ close to the
vertex, $\CT(y-y_0)$ is the characteristic function of the slice $\p\cap
(\aff(\f) +y)$, hence (ii).
\end{proof}\end{proof}

\section{Cohomology of line bundles over a toric variety}
Let us indicate  the relation of  our work   with toric varieties.
 Let $\Phi=(\phi_j)_{1\leq j\leq N} $ be a sequence of non zero elements
of a vector space $F$, generating F, and  spanning a salient cone.
Assume that the $\phi_i$'s  belong to a lattice $\Lambda$
and let $T$ be the torus with character group $\Lambda$
embedded in $T^N=S_1^N$ by the characters of $T$ associated to $(\phi_i)$.
This determines an action of $T$ in the complex space $\C^N$.
Each tope $\tau$ determines a toric variety $M_\tau$  (with orbifold singularities)
for the quotient torus $T^N/T$, in the following way.
If $\lambda\in \tau$, $M_\tau$ is the reduced manifold  $\C^N//_{\lambda}T$ at $\lambda\in \t^*$.
Then the vectors $\phi_i$ parameterize the boundary divisors $D_i$ in $M_\tau$ and
 each element $\lambda\in \Lambda$ determines a $T^N$-equivariant sheaf
 $\mathcal O(\lambda)$ on $M_\tau$.

 The lattice of characters of the $d$-dimensional  torus $T^N/T$ is identified with $V\cap \Z^N$.

 The torus $T^N$ acts on the cohomology groups $H^{i}(M_\tau,\mathcal O(\lambda))$.
When $\lambda\in \tau$,
then all the cohomology groups  $H^i$ for $i>0$ vanish,
and a weight $m\in \Z^N$ of $T^N$ occurs in $H^0(M_\tau,\CO(\lambda))$
if and only if $m\in \p(\Phi,\lambda)\cap \Z^N$.
Thus the dimension of the space $H^0(M_\tau,\CO(\lambda))$
is just the number of integral points in $\p(\Phi,\lambda)$.

If  $\lambda\in \Lambda$ does not  belong to the tope $\tau$, and $i>0$,
 the  cohomology space
$H^i(M_\tau,\CO(\lambda))$ is in general not zero.
It is natural to introduce the virtual space
 $$
 {\mathcal H}(\tau,\lambda):=\sum_{i=0}^d (-1)^i H^i(M_\tau,\CO(\lambda)).
 $$
It follows from the Kawasaki-Riemann-Roch theorem that the
 virtual dimension of ${\mathcal H}(\tau,\lambda)$ is a quasi polynomial function of $\lambda$.

More precisely,  we can use the fixed point theorem to compute
 the character of $T^N$ in ${\mathcal H}(\tau,\lambda)$ (see  \cite{brion-vergne-toric-1997}).
As the  construction of the present article reproduces  this fixed point theorem at the level of sets,
we obtain the weight decomposition of the $T^N$-module
 $$
 {\mathcal H}(\tau,\lambda)=\sum_{m\in \Z^N\cap V(\Phi,\lambda)} \varchenko(\Phi,\tau)(m) e^m.
 $$
In other words, the function
$\varchenko(\Phi,\tau)$ on $\Z^N$ computes simultaneously
(for all sheaves $\mathcal O(\lambda)$)
the multiplicity of a weight $m$
in the alternate sum of cohomology spaces.
In particular,  the function
 $\varchenko(\Phi,\tau)\cap [V(\Phi,\lambda)]$ is the constructible function on  $V(\Phi,\lambda)$ associated by Morelli \cite{MR1234308} to the sheaf $\mathcal O(\lambda)$.

Recall the formula
\begin{equation*}
\varchenko(\Phi,\tau)=\sum_B z(\Phi,\tau,B) [Q_{\rm neg}^B].
 \end{equation*}
Let us comment on the explicit computation of   the coefficients $z(\Phi,\tau,B)$ of $\varchenko(\Phi,\tau)$.
We wrote a brute force Maple program to compute $X(\Phi,\tau)$,
out of  its definition (Equation (\ref{eq:geometric_brianchon_gram_})),
by enumerating the generating subsets of the system $\Phi$
and checking which ones are in $\mathcal G(\Phi,\tau).$
It would be certainly more efficient to  use Theorem \ref{th:polarized-sum},
and then determine $\mathcal B(\Phi,\tau)$ using the reverse-search algorithm of Avis-Fukuda \cite{MR1174359}.
Anyway,  we obtain the decomposition as a sum of monomials
$$
X(\Phi,\tau)=\sum_B z(\Phi,\tau,B) \prod_{i\in B^c}p_i\prod_{j\in B}q_j.
$$
If $m\in \Z^N$, we denote by $B_m$  the set of indices $i$ such that  $m_i<0$.
Then the multiplicity of $m$ in the $T^N$ module ${\mathcal H}(\tau,\lambda)$ is obtained by
computing the coefficient $z(\Phi,\tau,B_m)$ of the monomial
$\prod_{i\in B_m^c}p_i \prod_{i\in B_m}q_i $ in  $X(\Phi,\tau)$.

Y. Karshon and S. Tolman \cite{karshon-tolman-presymplectic-1993}
have studied the representation space ${\mathcal H}(\tau,\lambda)$
associated to a non-ample line bundle on the manifold $M_\tau$,
and they have given an algorithm to compute  a weight in this representation space
by wall crossing.
Our algorithm (Theorem \ref{th:salient})
to determine $z(\Phi,\tau,B)$ is probably very similar.
However, as we deal with arbitrary ``weights" $\phi_i$ (not assumed rational),
our methods use ``only  linear algebra",  not geometry.

By summing up the multiplicities of the  weights in $\mathcal H(\tau,\lambda)$, we obtain the expression of the function
$$
\lambda\mapsto  \dim {\mathcal H}(\tau,\lambda)=\sum_B z(\Phi,\tau,B) {\rm cardinal} (\p_{\rm flip}(\Phi,B,\lambda)\cap \Z^N)
$$
as a sum of partition functions with respect to particular flipped systems.

Remark that if $\lambda\in (\tau-\b(\Phi))\cap \Lambda$,
the continuity property asserts that
the dimension of ${\mathcal H}(\tau,\lambda)$ is still equal to the dimension of $H^0$,
that is the cardinal of  $\p(\Phi,\lambda)\cap \Z^N$.
This is in accordance with the following vanishing theorem
\cite{mustata-vanishing-toric-2002}.
\begin{theorem}
If  $\lambda\in (\tau-\b(\Phi))\cap \Lambda$ then $H^i(M_\tau,\CO(\lambda))=0$ for  $i>0$.
\end{theorem}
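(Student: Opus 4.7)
The plan is to combine the character identity established earlier in the paper with a weight-by-weight analysis of the cohomology spaces in the style of Demazure and Mustata. The first step is to record the equivariant Euler characteristic. Applying the Atiyah--Bott--Lefschetz fixed point formula (or Kawasaki--Riemann--Roch in the orbifold case) to the $T^N/T$-action on $M_\tau$, whose fixed points are isolated and in bijection with $\CB(\Phi,\tau)$, one obtains a sum over vertex cones which, after clearing common denominators, is exactly the Lawrence--Varchenko polarized function $Y(\Phi,\tau,\beta)$ of Definition \ref{Y}. By Theorem \ref{th:polarized-sum} this equals the Brianchon--Gram expression $X(\Phi,\tau)$, giving the identity of virtual $T^N$-modules
$$\mathcal H(\tau,\lambda)=\sum_{m\in V(\Phi,\lambda)\cap\Z^N}\varchenko(\Phi,\tau)(m)\,e^m$$
already recalled in the preceding section. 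Combining this with Corollary \ref{th:continuity-on-closed-tope}(ii), for $\lambda\in(\tau-\b(\Phi))\cap\Lambda$ the virtual character of $\mathcal H(\tau,\lambda)$ is $\sum_{m\in \p(\Phi,\lambda)\cap\Z^N}e^m$: each lattice point of $\p(\Phi,\lambda)$ contributes with multiplicity exactly $1$ and no other weight appears.

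The second step is to pass from the Euler characteristic to the individual cohomology spaces. For this I would use the classical Demazure-type formula: for each character $m\in \Z^N$ of $T^N$, there is an explicit subcomplex $\Sigma_m^{-}$ of the fan $\Sigma_\tau$ of $M_\tau$, built from those rays $\phi_i$ on which the inequality determined by $(m,\lambda)$ and the support function of $\CO(\lambda)$ fails, such that
$$\dim H^i(M_\tau,\CO(\lambda))_m \;=\; \dim \widetilde H^{i-1}\bigl(|\Sigma_m^{-}|;\C\bigr).$$
This identity is valid for any $\lambda$ and any $m$, and it is the toric analogue of a local duality computation in the Cox ring of $M_\tau$.

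The third and decisive step is the combinatorial assertion, which is the heart of Mustata's argument in \cite{mustata-vanishing-toric-2002}: under the hypothesis $\lambda\in(\tau-\b(\Phi))\cap\Lambda$, the subcomplex $|\Sigma_m^{-}|$ is empty whenever $m\notin \p(\Phi,\lambda)$ and is contractible whenever $m\in \p(\Phi,\lambda)$. Granted this, $\widetilde H^{i-1}(|\Sigma_m^{-}|;\C)=0$ for all $i\geq 1$ and all $m$, so summing over the weights yields $H^i(M_\tau,\CO(\lambda))=0$ for $i>0$; the step 1 computation is then consistent and identifies $H^0$ with $\bigoplus_{m\in \p(\Phi,\lambda)\cap\Z^N}\C\,e^m$. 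The main obstacle is precisely this contractibility statement: one must translate the condition $\lambda\in\tau-\b(\Phi)$, i.e.\ $\lambda=\nu+\sum_i t_i\phi_i$ with $\nu\in\tau$ and $t_i\in[0,1]$, into a statement about the combinatorics of the fan $\Sigma_\tau$ that guarantees the relevant subcomplexes are convex-like. The rest of the argument is then a standard star-shaped contraction on the fan. The character identity from the paper supplies the Euler-characteristic shadow of this vanishing, while the full vanishing requires the extra topological input just described.
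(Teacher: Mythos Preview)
The paper does not prove this theorem. It is stated without proof and attributed to Musta\c{t}\u{a} \cite{mustata-vanishing-toric-2002}; the authors merely observe that their continuity result (Corollary \ref{th:continuity-on-closed-tope}) is ``in accordance with'' this vanishing theorem, since both imply that for $\lambda\in(\tau-\b(\Phi))\cap\Lambda$ the virtual dimension of $\mathcal H(\tau,\lambda)$ equals the number of lattice points in $\p(\Phi,\lambda)$. So there is no proof in the paper to compare your proposal against.

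Your sketch is a reasonable outline of the Demazure--Musta\c{t}\u{a} approach, and you are right that the paper's contribution is only the Euler-characteristic shadow (your Step~1). But as you yourself note, the substantive content---the contractibility of the subcomplexes $|\Sigma_m^-|$ under the hypothesis $\lambda\in\tau-\b(\Phi)$---is asserted rather than proved. That is the entire theorem, and nothing in the present paper supplies it; the character identity cannot distinguish genuine vanishing from cancellation between even and odd cohomology. If the intent is to record that the result is quoted from the literature, that is accurate; if the intent is to give an independent proof, Step~3 is a gap that would require the full combinatorial argument from \cite{mustata-vanishing-toric-2002}.
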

It would be interesting to study  the locally quasi  polynomial
function $h_i(\Phi,\tau)(\lambda)=\dim H^i(M_\tau,\mathcal O(\lambda))$ for each $i$.
>From Demazure's description of the individual cohomology groups
$H^i(M_\tau,\mathcal O(\lambda))$
(see for example the  forthcoming book \cite{Cox-Little-Schenck},
Chapter 9),
 we see that it is a locally quasi-polynomial function, sum of partition functions of flipped systems.
Thus each  locally  quasi polynomial function $h_i(\Phi,\tau)$
is a particular element of the generalized Dahmen-Micchelli space $\mathcal F(\Phi)$ introduced in \cite{deconcini-procesi-vergne-dahmen-micchelli-2008}.
It would be interesting to study the relations between these different  locally quasi polynomial functions on $\Lambda$.

Let us give a last example to illustrate the method.
We consider the hexagon defined by the following  inequalities in $\R^2$.
$x_1\geq 0,\, x_1\leq 2, \,x_2\geq 0,\, x_{{1}}+x_{{2}}\geq 1,\,x_{{1}}+x_{{2}}\leq 4,\,x_{{1}}-x_{{2}}\geq -2$.
The corresponding toric variety $M_{\rm hex}$ of dimension $2$  is defined by the fan with  edges
$(1,0),(1,1),(0,1),(-1,0),(-1,-1),(1,-1)$.

We can also describe $M_{\rm hex}$ as a reduced Hamiltonian manifold, with the help of an ample line bundle.
We consider the standard torus of dimension $4$ acting in $\C^6$ with the following list $\Phi$ of weights
$$
((1, 0, 0, 0), (0, 1, 0, 0), (0, 0, 1, 0), (0, 0, 0, 1), (-1, -1, 1, 1), (1, -1, 0, 1)).
$$
If $\tau$  is the tope which contains the vector $[2,-1,2,4]$,
then
the reduced manifold $M_\tau$ is  the manifold $M_{\rm hex}$.

We compute $X(\Phi,\tau)$ (by brute force) and obtain:
\begin{multline*}
X(\Phi,\tau)=\\
p_{{1}}p_{{2}}p_{{3}}p_{{4}}p_{{5}}p_{{6}}-p_{{1}}p_{{2}}p_{{3}}p_{{4}
}q_{{5}}q_{{6}}-p_{{1}}p_{{2}}p_{{3}}p_{{5}}q_{{4}}q_{{6}}-p_{{1}}p_{{
2}}p_{{3}}p_{{6}}q_{{4}}q_{{5}}\\
-2\,p_{{1}}p_{{2}}p_{{3}}q_{{4}}q_{{5}}
q_{{6}}\\
-p_{{1}}p_{{2}}p_{{4}}p_{{6}}q_{{3}}q_{{5}}-p_{{1}}p_{{2}}p_{{4
}}q_{{3}}q_{{5}}q_{{6}}-p_{{1}}p_{{2}}p_{{6}}q_{{3}}q_{{4}}q_{{5}}-p_{
{1}}p_{{2}}q_{{3}}q_{{4}}q_{{5}}q_{{6}}-p_{{1}}p_{{3}}p_{{5}}p_{{6}}q_
{{2}}q_{{4}}\\
-p_{{1}}p_{{3}}p_{{5}}q_{{2}}q_{{4}}q_{{6}}-p_{{1}}p_{{3}}
p_{{6}}q_{{2}}q_{{4}}q_{{5}}-p_{{1}}p_{{3}}q_{{2}}q_{{4}}q_{{5}}q_{{6}
}-p_{{1}}p_{{4}}p_{{5}}p_{{6}}q_{{2}}q_{{3}}-p_{{1}}p_{{4}}p_{{6}}q_{{
2}}q_{{3}}q_{{5}}\\
-p_{{1}}p_{{5}}p_{{6}}q_{{2}}q_{{3}}q_{{4}}-p_{{1}}p_
{{6}}q_{{2}}q_{{3}}q_{{4}}q_{{5}}-p_{{2}}p_{{3}}p_{{4}}p_{{5}}q_{{1}}q
_{{6}}-p_{{2}}p_{{3}}p_{{4}}q_{{1}}q_{{5}}q_{{6}}
-p_{{2}}p_{{3}}p_{{5}
}q_{{1}}q_{{4}}q_{{6}}\\
-p_{{2}}p_{{3}}q_{{1}}q_{{4}}q_{{5}}q_{{6}}
-p_{{2}}p_{{4}}p_{{5}}p_{{6}}q_{{1}}q_{{3}}
-p_{{2}}p_{{4}}p_{{5}}q_{{1}}q_{{3}}q_{{6}}
-p_{{2}}p_{{4}}p_{{6}}q_{{1}}q_{{3}}q_{{5}}
-p_{{2}}p_{{4}}q_{{1}}q_{{3}}q_{{5}}q_{{6}}\\
-p_{{3}}p_{{4}}p_{{5}}p_{{6}}q_{{1}}q_{{2}}
-p_{{3}}p_{{4}}p_{{5}}q_{{1}}q_{{2}}q_{{6}}
-p_{{3}}p_{{5}}p_{{6}}q_{{1}}q_{{2}}q_{{4}}
-p_{{3}}p_{{5}}q_{{1}}q_{{2}}q_{{4}}q_{{6}}\\
-2\,p_{{4}}
p_{{5}}p_{{6}}q_{{1}}q_{{2}}q_{{3}}\\
-p_{{4}}p_{{5}}q_{{1}}q_{{2}}q_{{3}
}q_{{6}}-p_{{4}}p_{{6}}q_{{1}}q_{{2}}q_{{3}}q_{{5}}
-p_{{5}}p_{{6}}q_{{1}}q_{{2}}q_{{3}}q_{{4}}+q_{{1}}q_{{2}}q_{{3}}q_{{4}}q_{{5}}q_{{6}}.
\end{multline*}
We can immediately read on this expression the multiplicity
of a weight $m=(m_1,m_2,m_3,m_4,m_5,m_6)$ in the space ${\mathcal H}(\tau,\lambda)$
for any $m$  and any $\lambda$.
We see that the multiplicities of $m$ can be $0$,$1$,$-1$,$-2$
depending on the quadrant in which $m$ lies.

For example, for $\lambda=(200,434,378,-400)$,
the weight $$m=(200, 234, 478, -200, -100, -100)$$ has multiplicity $-2$
in the space ${\mathcal H}(\tau,\lambda)$.
Indeed the coefficient of $p_1p_2p_3q_4q_5q_6$ in $X(\Phi,\tau)$ is $-2$.

Given $\lambda\in \Z^4$, we parameterize the integral points in $V(\Phi,\lambda)$ by $(x_1,x_2)\in\Z^2$,
with corresponding $m\in \Z^6$ given by
$$
m=(\lambda_{{1}}+x_1-x_2,\lambda_{{2}}+x_1+x_2,\lambda_{{3}}-x_1,\lambda_{{4}}-x_1-x_2
,x_1,x_2).$$
With this parametrization,
the figures    \ref{fig:hexagon_1}, \ref{fig:hexagon_234} and \ref{fig:hexagon_567}
describe the support of the module $\mathcal H(\tau,\lambda)$
as $\lambda$ moves along the line joining $\lambda_0=(200, -100,200,400$ (in the ample cone) to $\lambda_1=(200,434,378,-400)$.
The line crosses six walls.

 We assign colors to the multiplicities: $blue=1$, $yellow:=-1$, $red:=-1$, $green:=-1$, $magenta:=-2$, $black:=-1$, $khaki=-1$.
In the first figure   \ref{fig:hexagon_1}, $\lambda$  is in the starting tope (the ample cone).
In the last three steps, a polygon with  multiplicity $-2$ (colored in magenta) has appeared in the middle of the picture.
\begin{figure}[!h]
\begin{center}
\includegraphics[width=1.in]{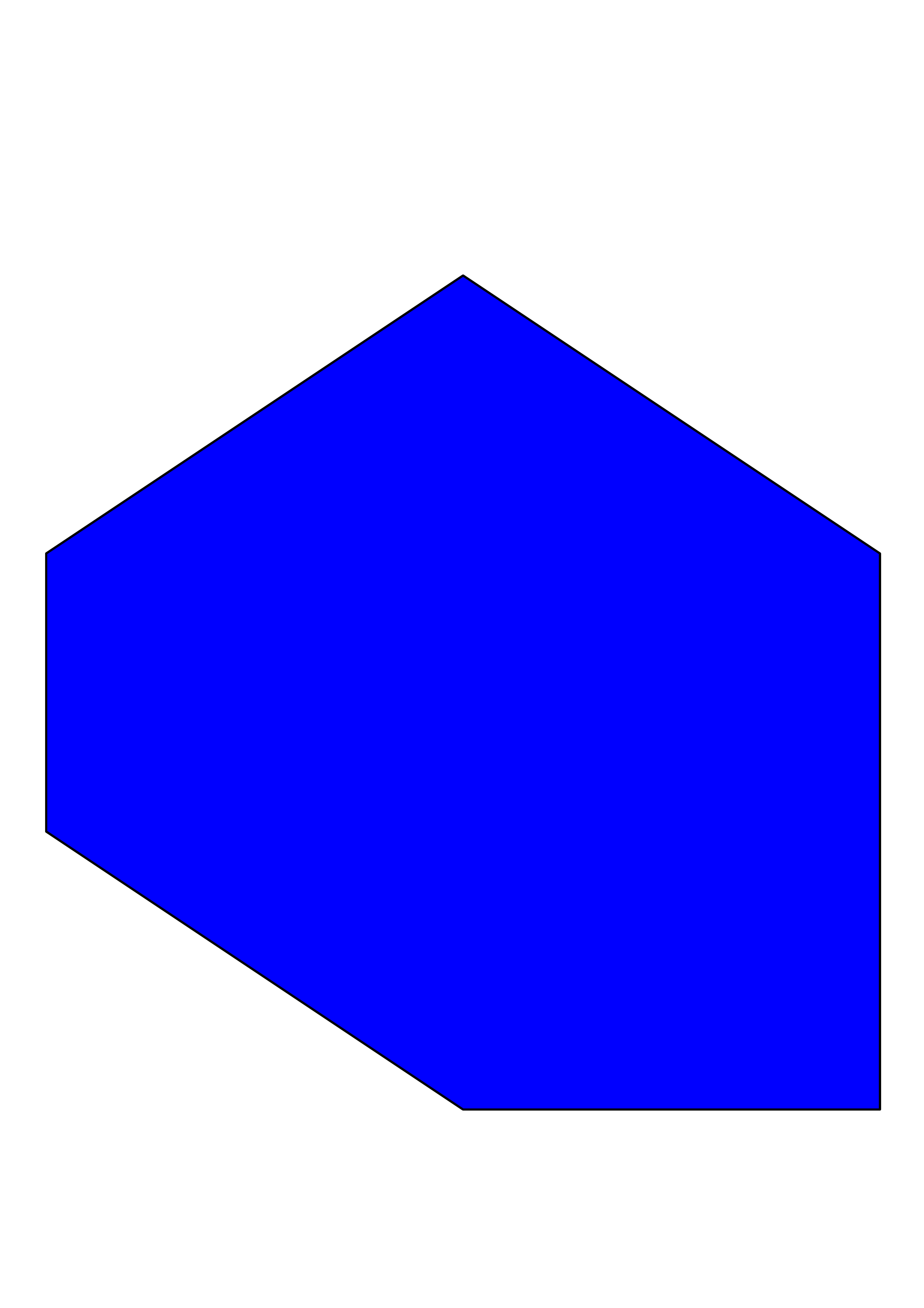}
\caption{ At the beginning $\lambda=(200, -100, 200, 400)$ is in the ample cone. The partition polytope
is an hexagon.}
 \label{fig:hexagon_1}
  \end{center}
\end{figure}
\begin{figure}[!h]
\begin{center}
 \includegraphics[width=1.in]{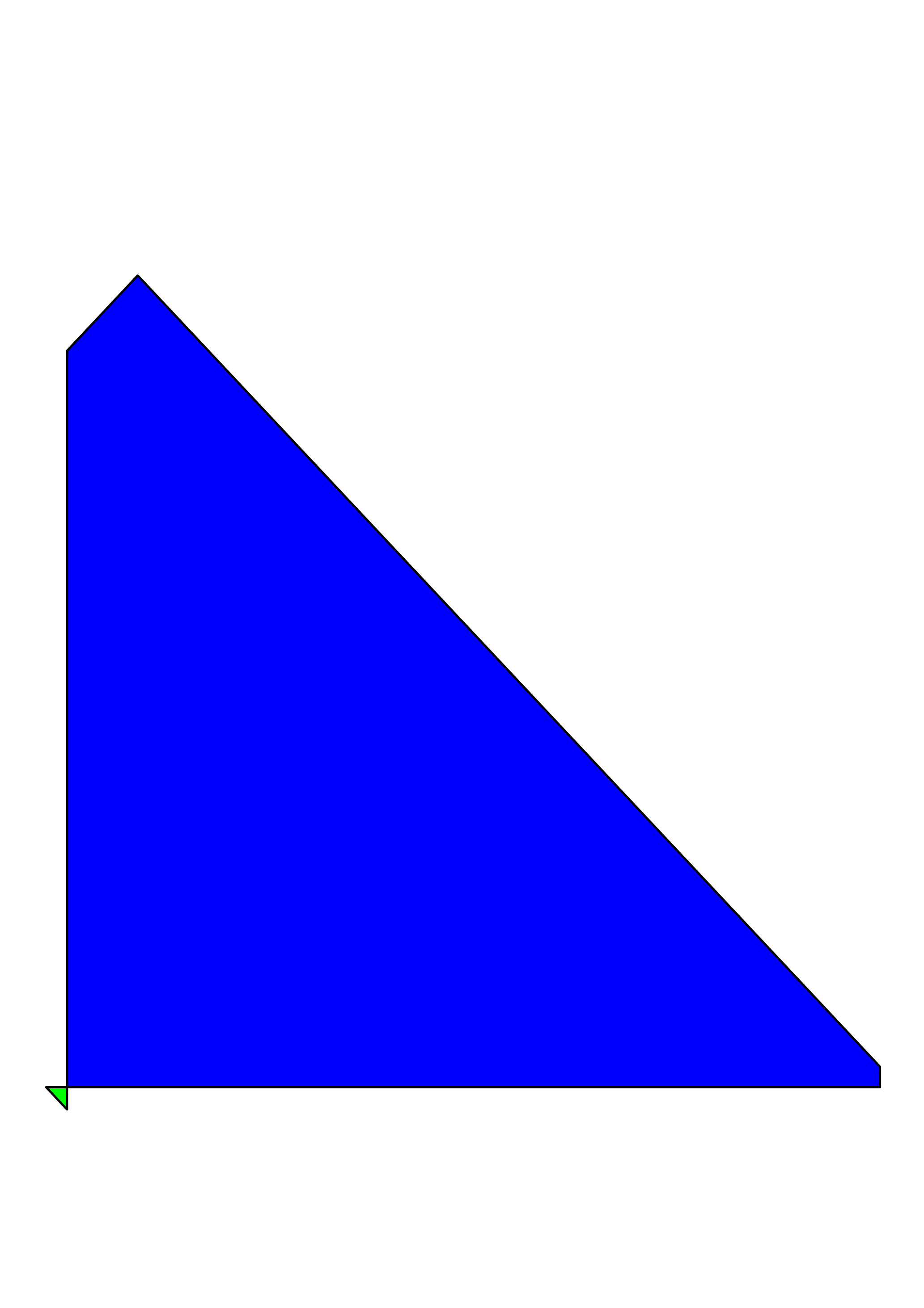}
 \includegraphics[width=1.in]{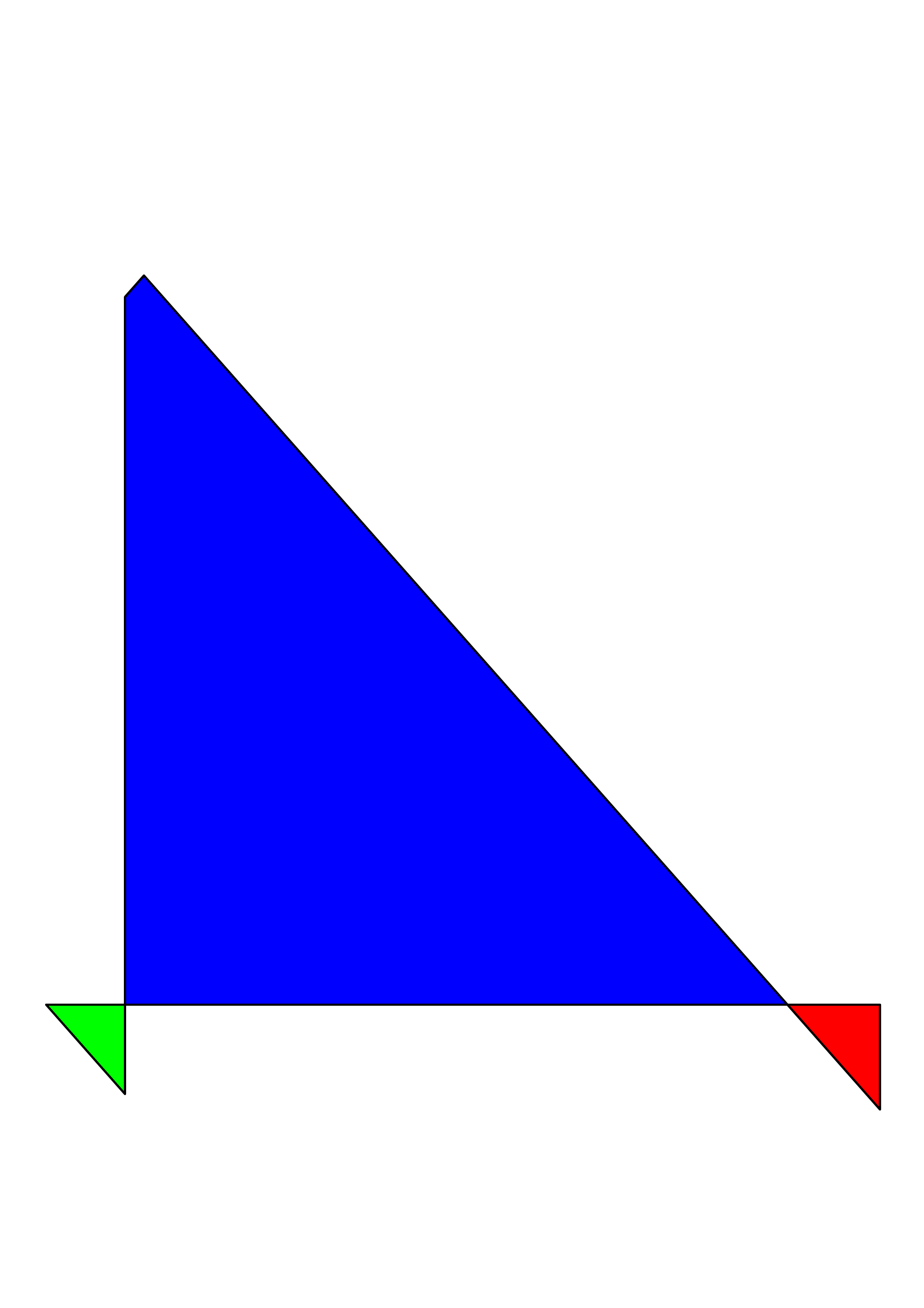}\includegraphics[width=1.in]{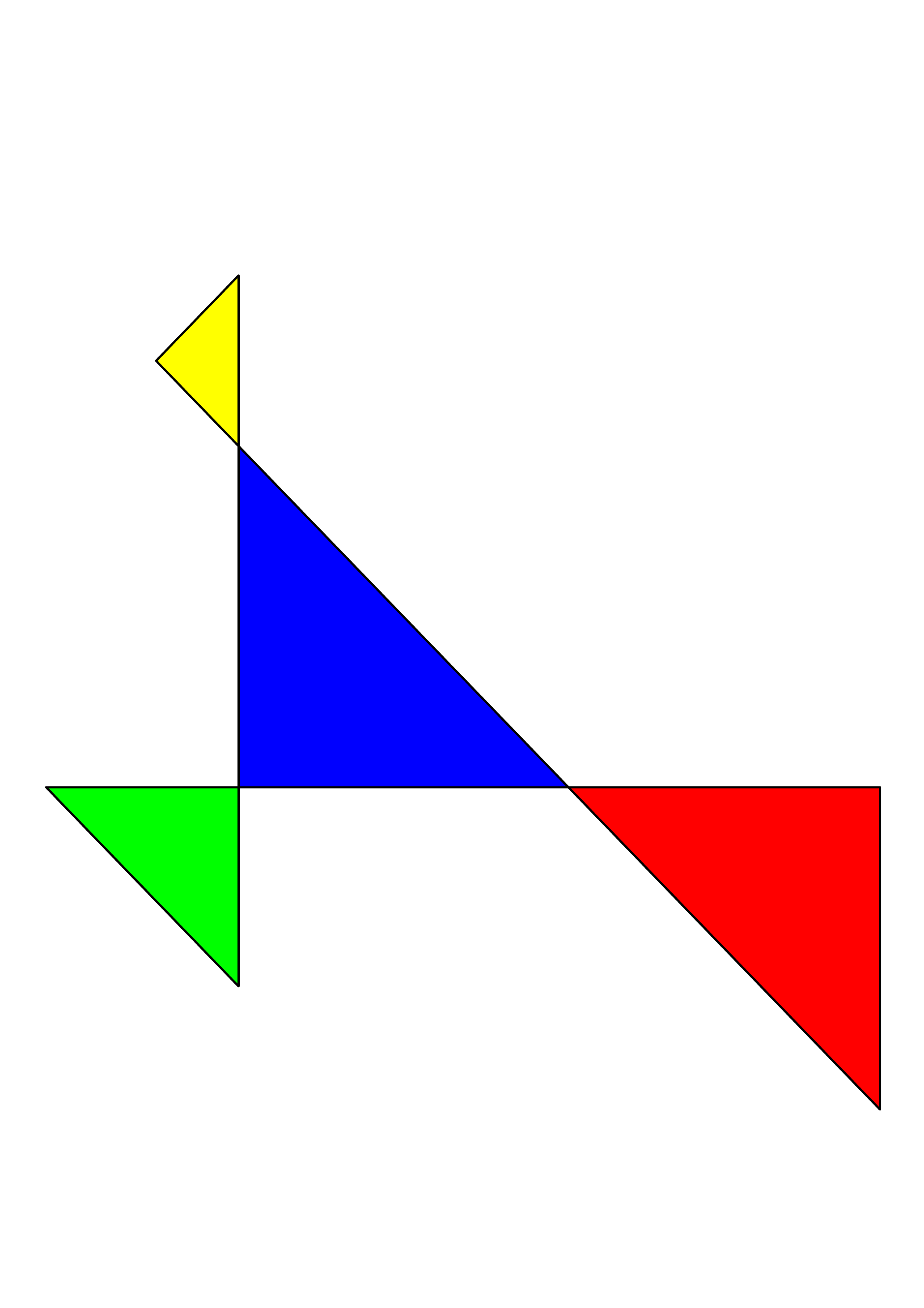}
\caption{From left to right,  $\lambda$  crosses three walls, one at a time. The new triangles have multiplicity -1.}
  \label{fig:hexagon_234}
  \end{center}
\end{figure}
\begin{figure}[!h]
\begin{center}
\includegraphics[width=1 in]{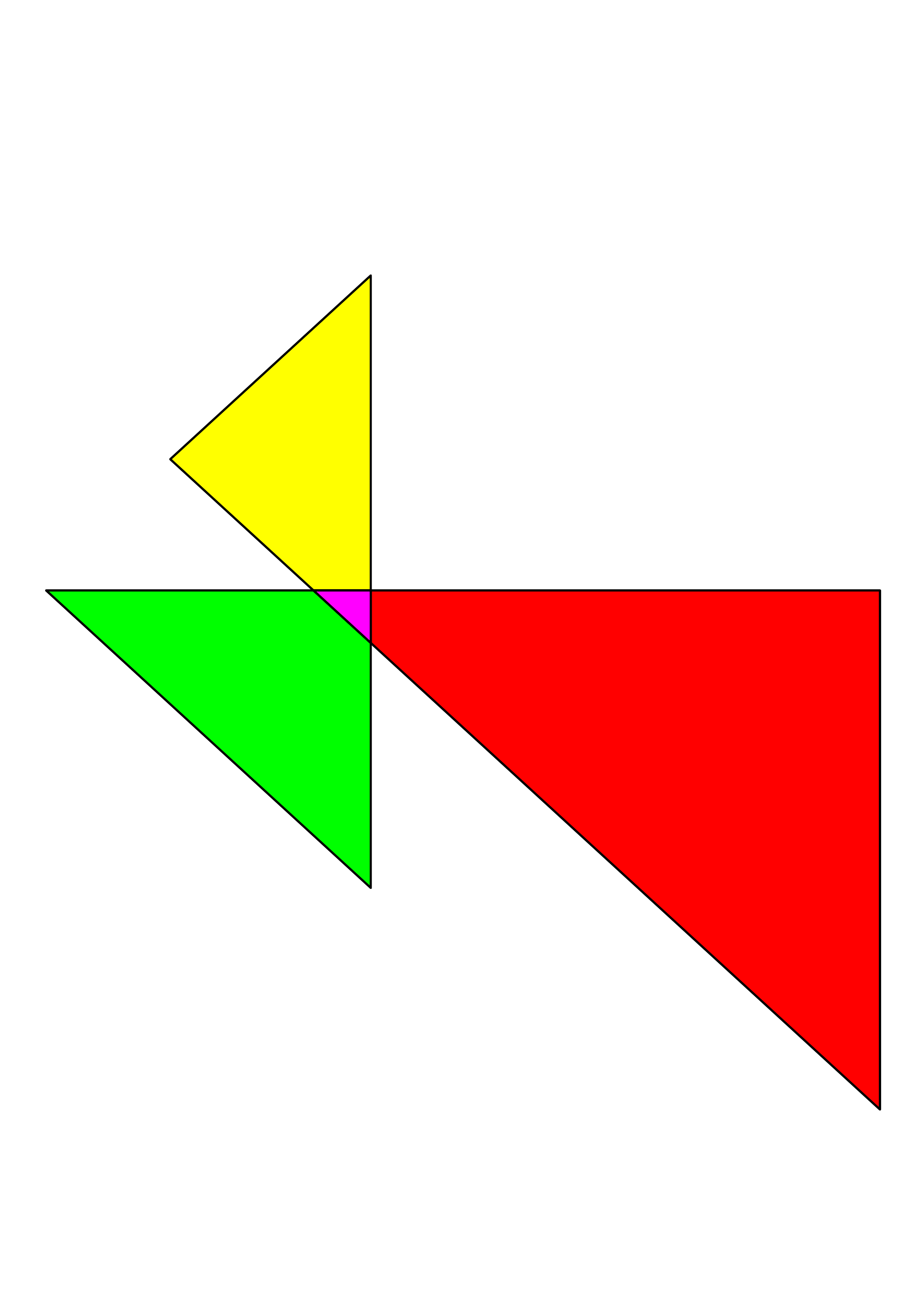}
 \includegraphics[width=1.in]{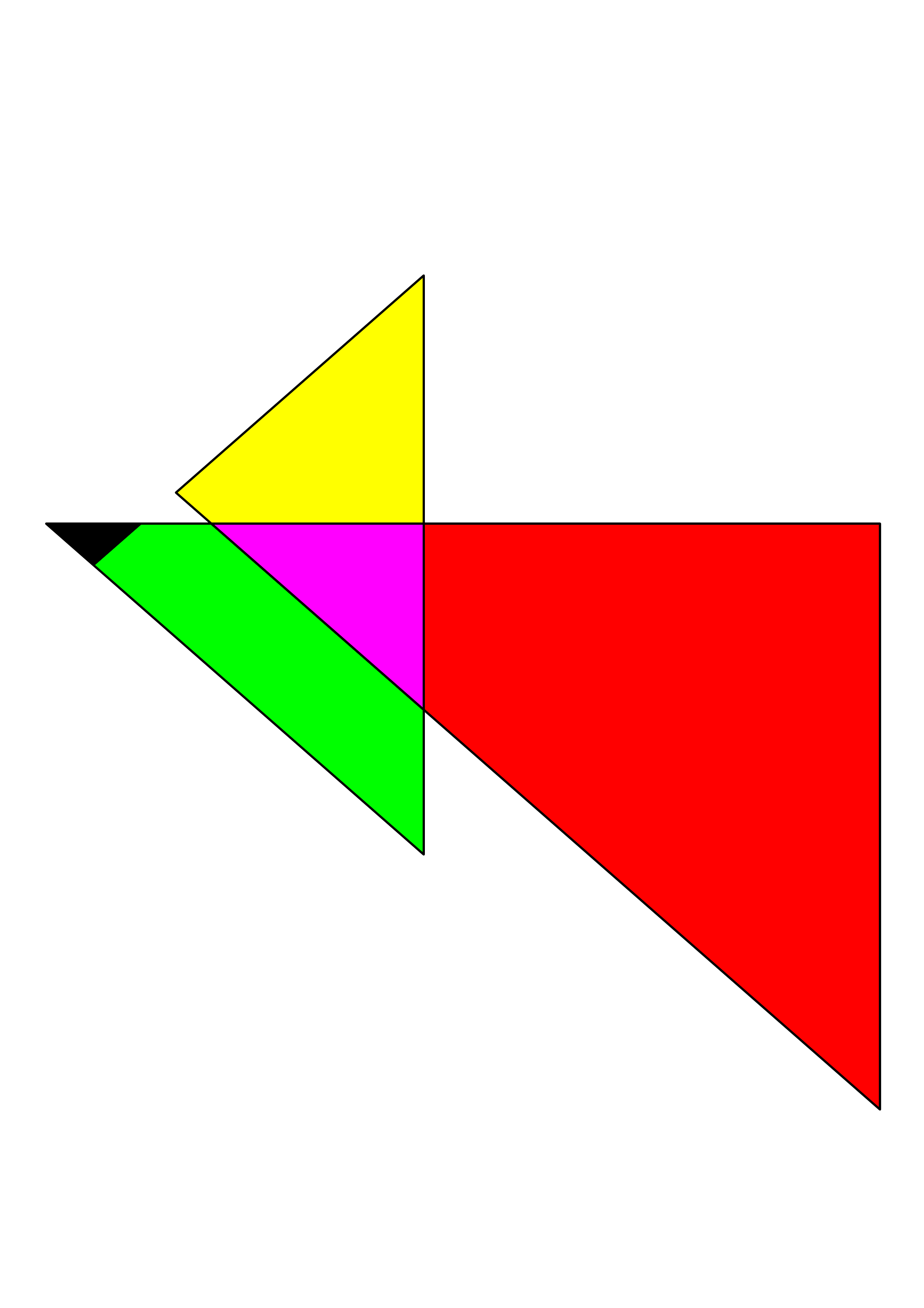}\includegraphics[width=1 in]{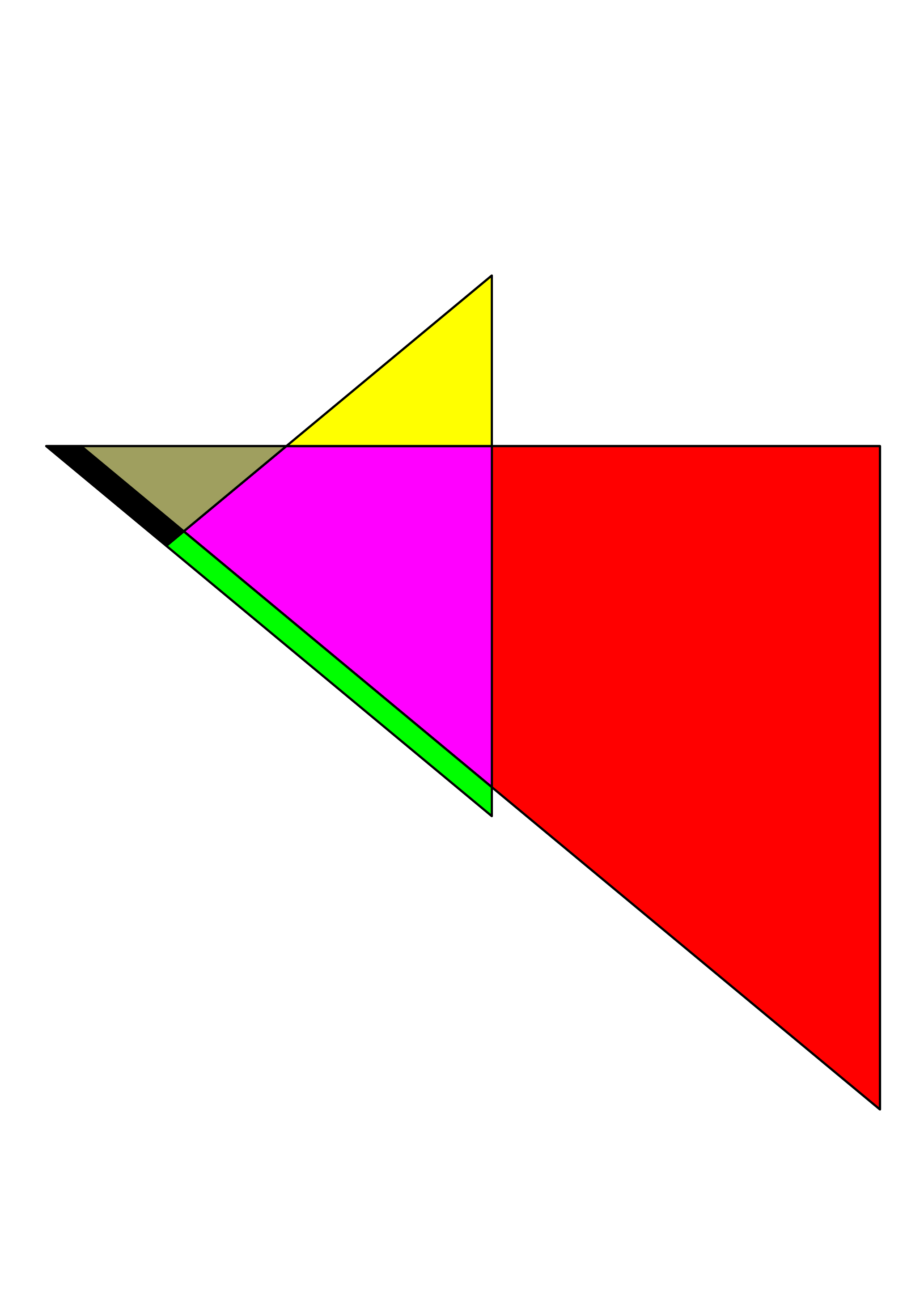}
\caption{$\lambda$ crosses three more walls. The polytope colored in magenta has multiplicity -2. }
 \label{fig:hexagon_567}
  \end{center}
\end{figure}

\newpage
\bibliographystyle{amsabbrv}
\bibliography{biblioNicolepassemuraille-sacre}
\end{document}